\title{\textsc{An arithmetic Riemann-Roch theorem for pointed stable curves}}
\author{\textsc{Gerard Freixas i Montplet}}
\date{}
\numberwithin{equation}{section}
\theoremstyle{plain}
\newtheorem{theorem}{Theorem}[section]
\newtheorem{proposition}[theorem]{Proposition}
\newtheorem{lemma}[theorem]{Lemma}
\newtheorem{corollary}[theorem]{Corollary}
\newtheorem*{theoremA}{Theorem A}
\newtheorem*{theoremB}{Theorem B}
\theoremstyle{definition}
\newtheorem{definition}[theorem]{Definition}
\newtheorem{construction}[theorem]{Construction}
\newtheorem{notation}[theorem]{Notation}
\theoremstyle{remark}
\newtheorem{remark}[theorem]{Remark}
\DeclareMathOperator{\Spec}{Spec}
\DeclareMathOperator{\hyp}{hyp}
\DeclareMathOperator{\pre}{pre}
\DeclareMathOperator{\ACH}{\widehat{CH}}
\DeclareMathOperator{\CH}{CH}
\DeclareMathOperator{\ac1}{\widehat{c}_{1}}
\DeclareMathOperator{\c1}{c_{1}}
\DeclareMathOperator{\PSL}{PSL}
\DeclareMathOperator{\an}{an}
\DeclareMathOperator{\Real}{Re}
\DeclareMathOperator{\adeg}{\widehat{deg}}
\DeclareMathOperator{\Gm}{\mathbb{G}_{m}}
\DeclareMathOperator{\Res}{Res}
\DeclareMathOperator{\reg}{reg}
\DeclareMathOperator{\sing}{sing}
\DeclareMathOperator{\gdiv}{div}
\DeclareMathOperator{\detp}{det^{\prime}}
\DeclareMathOperator{\cusps}{cusps}
\newcommand{\OO}{\mathcal{O}}
\newcommand{\XX}{\mathcal{X}}
\newcommand{\BS}{\mathcal{S}}
\newcommand{\UU}{\mathcal{U}}
\newcommand{\CC}{\mathbb{C}}
\newcommand{\Int}{\mathbb{Z}}
\newcommand{\SCM}{\mathcal{\overline{M}}}
\newcommand{\PP}{\mathbb{P}}
\newcommand{\HH}{\mathbb{H}}
\newcommand{\RR}{\mathbb{R}}
\newcommand{\QQ}{\mathbb{Q}}
\newcommand{\QQQ}{\overline{\mathbb{Q}}}
\newcommand{\pd}{\partial}
\newcommand{\cz}{\overline{z}}
\newcommand{\C}{\mathcal{C}}
\newcommand{\SCC}{\mathcal{\overline{C}}}
\newcommand{\SM}{\mathcal{M}}
\newcommand{\DD}{\mathcal{D}}
\newcommand{\ff}{\mathfrak{f}}
\newcommand{\fg}{\mathfrak{g}}
\newcommand{\fp}{\mathfrak{p}}
\newcommand{\fX}{\mathfrak{X}}
\newcommand{\fY}{\mathfrak{Y}}
\newcommand{\fZ}{\mathfrak{Z}}
\newcommand{\fD}{\mathfrak{D}}
\newcommand{\fL}{\mathfrak{L}}
\newcommand{\cpd}{\overline{\pd}}
\begin{document}
\setcounter{tocdepth}{1}
\setcounter{section}{0}
\maketitle

\footnotesize\textsc{Abstract.-} Let $(\OO, \Sigma, F_{\infty})$ be an arithmetic ring of Krull dimension at most 1, $\BS=\Spec\OO$ and $(\pi:\XX\rightarrow\BS; \sigma_{1},\ldots,\sigma_{n})$ a $n$-pointed stable curve of genus $g$. Write $\UU=\XX\setminus\cup_{j}\sigma_{j}(\BS)$. The invertible sheaf $\omega_{\XX/\BS}(\sigma_{1}+\ldots+\sigma_{n})$ inherits a hermitian structure $\|\cdot\|_{\hyp}$ from the dual of the hyperbolic metric on the Riemann surface $\UU_{\infty}$. In this article we prove an arithmetic Riemann-Roch type theorem that computes the arithmetic self-intersection of $\omega_{\XX/\BS}(\sigma_{1}+\ldots+\sigma_{n})_{\hyp}$. The theorem is applied to modular curves $X(\Gamma)$, $\Gamma=\Gamma_{0}(p)$ or $\Gamma_{1}(p)$, $p\geq 11$ prime, with sections given by the cusps. We show $Z^{\prime}(Y(\Gamma),1)\sim e^{a}\pi^{b}\Gamma_{2}(1/2)^{c}L(0,\SM_{\Gamma})$, with $p\equiv 11\mod 12$ when $\Gamma=\Gamma_{0}(p)$. Here $Z(Y(\Gamma),s)$ is the Selberg zeta function of the open modular curve $Y(\Gamma)$, $a,b,c$ are rational numbers, $\SM_{\Gamma}$ is a suitable Chow motive and $\sim$ means equality up to algebraic unit.\\

\textsc{R\'esum\'e.-} Soit $(\OO, \Sigma, F_{\infty})$ un anneau arithm\'etique de dimension de Krull au plus 1, $\BS=\Spec\OO$ et $(\pi:\XX\rightarrow\BS; \sigma_{1},\ldots,\sigma_{n})$ une courbe stable $n$-point\'ee de genre $g$. Posons $\UU=\XX\setminus\cup_{j}\sigma_{j}(\BS)$. Le faisceau inversible $\omega_{\XX/\BS}(\sigma_{1}+\ldots+\sigma_{n})$ h\'erite une structure hermitienne $\|\cdot\|_{\hyp}$ du dual de la m\'etrique hyperbolique sur la surface de Riemann $\UU_{\infty}$. Dans cet article nous prouvons un th\'eor\`eme de Riemann-Roch arithm\'etique qui calcule l'auto-intersection arithm\'etique de $\omega_{\XX/\BS}(\sigma_{1}+\ldots+\sigma_{n})_{\hyp}$. Le th\'eor\`eme est appliqu\'e aux courbes modulaires $X(\Gamma)$, $\Gamma=\Gamma_{0}(p)$ ou $\Gamma_{1}(p)$, $p\geq 11$ premier, prenant les cusps comme sections. Nous montrons $Z^{\prime}(Y(\Gamma),1)\sim e^{a}\pi^{b}\Gamma_{2}(1/2)^{c}L(0,\SM_{\Gamma})$, avec $p\equiv 11\mod 12$ lorsque $\Gamma=\Gamma_{0}(p)$. Ici $Z(Y(\Gamma),s)$ est la fonction zeta de Selberg de la courbe modulaire ouverte $Y(\Gamma)$, $a,b,c$ sont des nombres rationnels, $\SM_{\Gamma}$ est un motif de Chow appropri\'e et $\sim$ signifie \'egalit\'e \`a unit\'e pr\`es.

\normalsize

\tableofcontents

\section{Introduction}
Let $(\OO, \Sigma, F_{\infty})$ be an arithmetic ring of Krull dimension at most $1$ \cite[Def. 3.1.1]{GS}. This means that $\OO$ is an excellent, regular, Noetherian integral domain, $\Sigma$ is a finite non-empty set of monomorphisms $\sigma:\OO\hookrightarrow\CC$ and $F_{\infty}:\CC^{\Sigma}\rightarrow\CC^{\Sigma}$ is a conjugate-linear involution of $\CC$-algebras such that the diagram
\begin{displaymath}
	\xymatrix{
		&	&\CC^{\Sigma}\ar[dd]^{F^{\infty}}\\
		& \OO\ar[ru]^{\delta}\ar[rd]^{\delta}	&\\
		&	&\CC^{\Sigma}
	}
\end{displaymath}
commutes. Here $\delta$ is induced by the set $\Sigma$. Define $\BS=\Spec\OO$ and let $(\pi:\XX\rightarrow\BS;\sigma_{1},\ldots,\sigma_{n})$ be a $n$-pointed stable curve of genus $g$, in the sense of Knudsen and Mumford \cite[Def. 1.1]{Knudsen}. Assume that $\XX$ is regular. Write $\UU=\XX\setminus\cup_{j}\sigma_{j}(\BS)$. To $\XX$ and $\UU$ we associate the complex analytic spaces
\begin{displaymath}
 	\XX_{\infty}=\bigsqcup_{\sigma\in\Sigma}\XX_{\sigma}(\CC),\,\,\,\UU_{\infty}=\bigsqcup_{\sigma\in\Sigma}\UU_{\sigma}(\CC).
\end{displaymath}
Notice that $F_{\infty}$ acts on $\XX_{\infty}$ and $\UU_{\infty}$. The stability hypothesis guarantees that every connected component of $\UU_{\infty}$ has a hyperbolic metric of constant curvature $-1$. The whole family is invariant under the action of $F_{\infty}$. Dualizing we obtain an arakelovian --i.e. invariant under $F_{\infty}$-- hermitian structure $\|\cdot\|_{\hyp}$ on $\omega_{\XX/\BS}(\sigma_{1}+\ldots+\sigma_{n})$. Contrary to the requirements of classical Arakelov theory \cite{GS2}, the metric $\|\cdot\|_{\hyp}$ is \textit{not smooth}, but has some mild singularities of logarithmic type. Actually $\|\cdot\|_{\hyp}$ is a \textit{pre-log-log} hermitian metric in the sense of Burgos-Kramer-K\"uhn \cite[Sec. 7]{BKK}. Following loc. cit., there is a first arithmetic Chern class $\ac1(\omega_{\XX/\BS}(\sigma_{1}+\ldots+\sigma_{n})_{\hyp})$ that lives in a pre-log-log arithmetic Chow group $\ACH^{1}_{\pre}(\XX)$. The authors define an intersection product
\begin{displaymath}
	\ACH^{1}_{\pre}(\XX)\otimes_{\Int}\ACH^{1}_{\pre}(\XX)\overset{\cdot}{\longrightarrow}\ACH^{2}_{\pre}(\XX)
\end{displaymath}
and a pushforward map
\begin{displaymath}
	\pi_{\ast}:\ACH^{2}_{\pre}(\XX)\longrightarrow\ACH^{1}(\BS).
\end{displaymath}
This paper is concerned with the class $\pi_{\ast}(\ac1(\omega_{\XX/\BS}(\sigma_{1}+\ldots+\sigma_{n})_{\hyp})^{2})$.

In their celebrated work \cite{ARR}, Gillet and Soul\'e --with deep contributions of Bismut-- proved an arithmetic analogue of the Grothendieck-Riemann-Roch theorem. Their theorem deals with the push-forward of a smooth hermitian vector bundle by a proper and generically smooth morphism of arithmetic varieties. The associated relative complex tangent bundle is equipped with a \textit{smooth} K\"ahler structure. With the notations above, if $n=0$ and $g\geq 2$, then the metric $\|\cdot\|_{\hyp}$ is smooth and the arithmetic Grothendieck-Riemann-Roch theorem may be applied to $\omega_{\XX/\BS,\hyp}$ and the \textquotedblleft hyperbolic\textquotedblright\, K\"ahler structure on $\XX_{\infty}$. The result is a relation between $\pi_{*}(\ac1(\omega_{\XX/\BS,\hyp})^{2})\in\ACH^{1}(\BS)$ and the class $\ac1(\lambda(\omega_{\XX/\BS}) ,\|\cdot\|_{Q})$, where $\|\cdot\|_{Q}$ is the Quillen metric corresponding to our data. However, for $n>0$ the singularities of $\|\cdot\|_{\hyp}$ prevent from applying the theorem of Gillet and Soul\'e.

The present article focuses on the so far untreated case $n>0$. We prove an arithmetic analogue of the Riemann-Roch theorem that relates $\pi_{*}(\ac1(\omega_{\XX/\BS}(\sigma_1+\ldots+\sigma_n)_{\hyp})^{2})$ and $\ac1(\lambda(\omega_{\XX/\BS}),\|\cdot\|_{Q})$. The Quillen type metric $\|\cdot\|_{Q}$ is defined by means of the Selberg zeta function of the connected components of $\UU_{\infty}$ (see Definition \ref{definition:Quillen}). In contrast with the result of Gillet and Soul\'e, our formula involves the first arithmetic Chern class of a new hermitian line bundle $\psi_{W}$. The corresponding invertible sheaf is the pull-back of the so called \textit{tautological psi} line bundle on the moduli stack $\SCM_{g,n}$, by the classifying morphism $\BS\rightarrow\SCM_{g,n}$. The underlying hermitian structure is dual to Wolpert's renormalization of the hyperbolic metric \cite[Def. 1]{Wolpert:cusps} (see also Definition \ref{definition:Wolpert_metric} below). The class $\ac1(\psi_{W})$ reflects the appearance of the continuous spectrum in the spectral resolution of the hyperbolic laplacian. After the necessary normalizations and definitions given in Section \ref{section:conventions}, the main theorem is stated as follows:
\begin{theoremA}
Let $g,n\geq 0$ be integers with $2g-2+n>0$, $(\OO, \Sigma, F_{\infty})$ an arithmetic ring of Krull dimension at most $1$ and $\BS=\Spec\OO$. Let $(\pi:\XX\rightarrow\BS;\sigma_{1},\ldots,\sigma_{n})$ be a $n$-pointed stable curve of genus $g$, with $\XX$ regular. For every closed point $\wp\in\BS$ denote by $n_{\wp}$ the number of singular points in the geometric fiber $\XX_{\wp}$ and put $\Delta_{\XX/\BS}=[\sum_{\wp}n_{\wp}\wp]\in\CH^{1}(\BS)$. Then the identity
\begin{displaymath}
	\begin{split}
	12\ac1(\lambda(\omega_{\XX/\BS})_{Q})-\Delta_{\XX/\BS}+&\ac1(\psi_{W})=\\
	&\pi_{\ast}\left(\ac1(\omega_{\XX/\BS}(\sigma_1+\ldots+\sigma_n)_{\hyp})^{2}\right)\\
	&+\ac1\left(\OO(C(g,n))\right)
	\end{split}
\end{displaymath}
holds in the arithmetic Chow group $\ACH^{1}(\BS)$.
\end{theoremA}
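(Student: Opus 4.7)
The plan is to reduce Theorem A to an application of the smooth arithmetic Grothendieck--Riemann--Roch theorem of Gillet--Soul\'e, followed by a careful analysis of the change of metrics and of the Selberg zeta Quillen metric. First, I would pick an auxiliary \emph{smooth} $F_{\infty}$-invariant hermitian metric $\|\cdot\|_\infty$ on $\omega_{\XX/\BS}(\sigma_1+\ldots+\sigma_n)$, agreeing with $\|\cdot\|_{\hyp}$ away from a neighborhood of the cusps. Restricting via the canonical inclusion $\omega_{\XX/\BS}\hookrightarrow\omega_{\XX/\BS}(\sigma_1+\ldots+\sigma_n)$ produces a smooth hermitian $\omega_{\XX/\BS,\infty}$, and Gillet--Soul\'e's theorem, applied to $\omega_{\XX/\BS,\infty}$ with the K\"ahler structure induced on $\XX_{\infty}$, yields an identity of the desired shape but with $\|\cdot\|_\infty$ in place of $\|\cdot\|_{\hyp}$, the smooth Quillen metric $\|\cdot\|_{Q,\infty}$ in place of $\|\cdot\|_Q$, and without the $\psi_W$ correction.

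Next, I would measure the discrepancy between the two sides of this smooth identity and the two sides of Theorem A. The self-intersection term changes by $\pi_\ast(\ac1(\omega(\sigma)_{\hyp})^{2}-\ac1(\omega(\sigma)_\infty)^{2})$, which in the Burgos--Kramer--K\"uhn framework is a Bott--Chern type secondary class that one computes as a regularized integral over $\XX_\infty$ of the logarithm of the ratio of metrics against the Chern forms. The asymptotics $\|\cdot\|_{\hyp}^{2}\sim |z\log|z||^{2}$ near each cusp force one to excise small discs, and the regularization procedure of \cite{BKK} produces, as the residue of the divergent boundary term, precisely the first arithmetic Chern class of each $\sigma_j^{\ast}\omega_{\XX/\BS}(\sigma_1+\ldots+\sigma_n)$ equipped with Wolpert's renormalization. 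Summing over $j$ gives $\ac1(\psi_W)$.

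Simultaneously I would compare the two Quillen metrics on $\lambda(\omega_{\XX/\BS})$. By the Bismut--Gillet--Soul\'e anomaly formula, the ratio $\|\cdot\|_{Q,\infty}/\|\cdot\|_{Q,\hyp}^{\mathrm{smooth}}$ is controlled by Bott--Chern classes of the change of metric along $\XX_\infty$. The definition of $\|\cdot\|_{Q}$ through the Selberg zeta function (Definition \ref{definition:Quillen}) differs from the naive $L^{2}$-spectral Quillen metric on the punctured Riemann surface by a factor involving the continuous spectrum of the hyperbolic Laplacian on $\UU_{\infty}$. A Takhtajan--Zograf type comparison, expressing this factor in terms of the hyperbolic geometry near the cusps, shows that the discrepancy contributes exactly the $\ac1(\psi_W)$ appearing on the left hand side, together with a universal constant.

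Finally, once every term has been matched, what remains is an identity in $\ACH^{1}(\BS)$ whose difference is constant on the moduli stack $\SCM_{g,n}$, hence is a real number $C(g,n)$ depending only on $(g,n)$; its absorption into $\ac1(\OO(C(g,n)))$ finishes the proof. I expect the hardest step to be the analytic one: rigorously carrying out the regularization of the pre-log-log self-intersection near the cusps, and matching the resulting residue with both the Wolpert normalization and the Selberg-zeta definition of $\|\cdot\|_{Q}$, so that the two sources of $\ac1(\psi_W)$ combine with the correct rational coefficient. Everything else is a bookkeeping of Bott--Chern secondary classes once the BKK arithmetic intersection has been made explicit.
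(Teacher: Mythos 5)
Your strategy — smooth out the hyperbolic metric near the cusps, run Gillet--Soul\'e in the smooth regime, and then account for the discrepancy via Bott--Chern secondary classes and a boundary regularization — is a genuinely different route from the paper's, but it is essentially the approach of Weng that the introduction explicitly critiques, and it has two gaps that the author identifies as fatal for exactly this line of argument.

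First, the comparison you describe between the smooth Quillen metric $\|\cdot\|_{Q,\infty}$ and the Selberg-zeta Quillen metric $\|\cdot\|_{Q}$ of Definition \ref{definition:Quillen} is not an instance of the Bismut--Gillet--Soul\'e anomaly formula. That formula holds between two \emph{smooth} K\"ahler metrics on the same family; the hyperbolic metric on $\UU_\infty$ is singular at the cusps and its Laplacian has a continuous spectrum. There is no anomaly formula that transports the smooth spectral determinant to $Z'(\UU_\infty,1)$, and the Takhtajan--Zograf local index theorem (Theorem \ref{theorem:ZT}) only computes the curvature $\c1(\lambda_{g,n;Q})$, not the \emph{absolute normalization} of the metric. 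The entire point of the constant $E(g,n)$ in Definition \ref{definition:Quillen} is that this normalization has to be determined, and nothing in your scheme fixes it: the outcome would be a relation true ``up to an unknown universal constant,'' precisely the limitation the paper attributes to \cite{Weng}.

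Second, your final step --- ``what remains \ldots is constant on the moduli stack $\SCM_{g,n}$, hence is a real number $C(g,n)$'' --- breaks down in low genus. The usual way to promote a term-by-term match to a global identity is to observe that the discrepancy is a harmonic function on $\SM_{g,n;\CC}$ and invoke compactness-type rigidity. But there exist non-constant harmonic functions on $\SM_{g,n;\CC}$ for $g=0, n\geq 4$ and $g=1, n\geq 1$ (and the case $g=0, n=3$ has to be handled by yet another argument); the paper's footnote is explicit that this is where Weng's proof has a gap. The paper's route is structurally different and designed to avoid both problems: it degenerates $(g,n)$-pointed curves to compact genus-$(g+n)$ stable curves via Knudsen's clutching morphism $\gamma:\SCM_{g,n}\times\SCM_{1,1}^{\times n}\to\SCM_{g+n,0}$, where the Gillet--Soul\'e/Deligne isometry is already known with its constant (Proposition \ref{proposition:key}), establishes that the Liouville metric extends continuously across the boundary (Theorem \ref{theorem:liouville}), and then uses Wolpert's pinching expansion of the Selberg zeta function together with Burger's asymptotics for the small eigenvalues (Theorem \ref{theorem:Wolpert-Burger}) to identify the limiting Quillen metric --- this is exactly how both the Wolpert class $\psi_W$ and the explicit constant $C(g,n)$ emerge with definite coefficients. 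You would need to replace your analytic cusp-regularization with an argument of this strength to close those two gaps.
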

The theorem is deduced from the Mumford isomorphism on $\SCM_{g,n}$ (Theorem \ref{theorem:Mumford_isomorphism}) and a metrized version that incorporates the appropriate hermitian structures (Theorem \ref{theorem:Mumford_isometry}).\footnote{In particular, with the formalism of \cite[Sec. 4.3]{BKK2}, the assumption of regularity of $\XX$ can be weakened to $\pi:\XX\rightarrow\BS$ generically smooth.} The techniques employed combine the geometry of the boundary of $\SCM_{g+n,0}$ --through the so called clutching morphisms-- and the behavior of the small eigenvalues of the hyperbolic laplacian on degenerating families of compact surfaces. By a theorem of Burger \cite[Th. 1.1]{Burger} we can replace the small eigenvalues by the lengths of the pinching geodesics. Then Wolpert's pinching expansion of the family hyperbolic metric \cite[Exp. 4.2]{Wolpert:hyperbolic} provides an expression of these lengths in terms of a local equation of the boundary divisor $\pd\SM_{g+n,0}$. This gives a geometric manner to treat the small eigenvalues.  Another consequence of theorems \ref{theorem:Mumford_isomorphism} and \ref{theorem:Mumford_isometry} is a significant case of the local index theorem of Takhtajan-Zograf \cite{ZT:ZT_metric_0}--\cite{ZT:ZT_metric} (Theorem \ref{theorem:ZT} below).

Natural candidates to which Theorem A applies are provided by arithmetic models of modular curves, taking their cusps as sections. We focus on the curves $X(\Gamma)/\CC$, where $\Gamma\subset\PSL_{2}(\Int)$ is a congruence subgroup of the kind $\Gamma_{0}(p)$ or $\Gamma_{1}(p)$. We assume that $p\geq 11$ is a prime number. If $\Gamma=\Gamma_{0}(p)$, we further suppose $p\equiv 11\mod 12$. This guarantees in particular that $X(\Gamma)$ has genus $g\geq 1$. To $X(\Gamma)$ we attach two kinds of zeta functions:
\begin{itemize}
	\item[--] let $Y(\Gamma):=X(\Gamma)\setminus\lbrace \text{cusps}\rbrace$ be the open modular curve. Then $Y(\Gamma)$ is a hyperbolic Riemann surface of finite type. We denote by $Z(Y(\Gamma),s)$ the Selberg zeta function of $Y(\Gamma)$ (see Section \ref{section:conventions}). It is a meromorphic function defined over $\CC$, with a simple zero at $s=1$;
	\item[--] let $\text{Prim}_{2}(\Gamma)$ be a basis of normalized Hecke eigenforms for $\Gamma$. To $f\in\text{Prim}_{2}(\Gamma)$ we can attach a Chow motive $\SM(f)$ over $\QQ$, with coefficients in a suitable finite extension $F$ of $\QQ(\mu_{p})$, independent of $f$.\footnote{The construction of $\SM(f)$ amounts to the decomposition of the jacobian $\text{Jac}(X(\Gamma))$ under the action of the Hecke algebra. More generally, Deligne \cite[Sec. 7]{Deligne_motifs} and Scholl \cite[Th. 1.2.4]{Scholl} associate a Grothendieck --i.e. homological-- motive to any normalized new Hecke eigenform of weight $k\geq 2$, level $n$ and character $\chi$.} If $\chi$ is a Dirichlet character with values in $F^{\times}$, we denote by $\QQ(\chi)$ its Artin motive. For instance we may take $\chi=\overline{\chi_{f}}$, for the Dirichlet character $\chi_{f}$ associated to $f\in\text{Prim}_{2}(\Gamma)$. If $\text{Sym}^{2}$ denotes the square symmetrization projector and $(2)$ the Tate twist by $2$, we put
	\begin{displaymath}
		\SM_{\Gamma}:=\bigoplus_{f\in\text{Prim}_{2}(\Gamma)}\text{Sym}^{2}\SM(f)\otimes\QQ(\overline{\chi_{f}})(2)\in\text{Ob}(M_{\text{rat}}(\QQ)_{F}).
	\end{displaymath}
	The motivic $L$-function of $\SM_{\Gamma}$, $L(s,\SM_{\Gamma})$, can be defined --with the appropriate definition of the local factor at $p$-- so that we have the relation
	\begin{equation}\label{equation:-1}
		L(s,\SM_{\Gamma})
		=\prod_{f\in\text{Prim}_{2}(\Gamma)}L(s+2,\text{Sym}^{2}f,\overline{\chi_{f}}).
	\end{equation}
	The reader is referred to \cite{Carayol}, \cite[Sec. 7]{Deligne_motifs}, \cite[Sec. 5]{Hida}, \cite{Scholl} and \cite{Shimura} for details.\footnote{The factors $L(s+2,\text{Sym}^{2}f,\overline{\chi_{f}})$ where already studied by Hida \cite{Hida}, Shimura \cite{Shimura} and Sturm \cite{Sturm}.}
\end{itemize}
Denote by $\Gamma_{2}$ the Barnes double Gamma function \cite{Barnes} (see also \cite{Sarnak} and \cite{Voros}).
\begin{theoremB}
Let $p\geq 11$ be a prime number and $\Gamma=\Gamma_{0}(p)$ or $\Gamma_{1}(p)$. Assume $p\equiv 11\mod 12$ whenever $\Gamma=\Gamma_{0}(p)$. Then there exist rational numbers $a,b,c$ such that
\begin{displaymath}
	Z^{\prime}(Y(\Gamma),1)\sim_{\QQQ^{\times}}e^{a}\pi^{b}\Gamma_{2}(1/2)^{c}L(0,\SM_{\Gamma}),
\end{displaymath}
where $\alpha\sim_{\QQQ^{\times}}\beta$ means $\alpha=q\beta$ for some $q\in \QQQ^{\times}$.\footnote{The exponents $a,b,c$ can actually be computed in terms of $p$.}
\end{theoremB}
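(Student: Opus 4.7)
The plan is to apply Theorem A to a regular arithmetic model $\pi:\XX\to\BS$ of $X(\Gamma)$ over the ring of integers $\OO$ of a sufficiently large number field (containing $\QQ(\mu_{p})$), with the $n$ cusps as the sections $\sigma_{1},\ldots,\sigma_{n}$. Deligne--Rapoport's theory provides such a model for $p\geq 11$, and the additional assumption $p\equiv 11\mod 12$ when $\Gamma=\Gamma_{0}(p)$ forces $g\geq 1$, so that Theorem A is available. Composing the arithmetic degree $\adeg:\ACH^{1}(\BS)\to\RR$ with the norm $\BS\to\Spec\Int$ turns the identity of Theorem A into a scalar identity in $\RR$, and the strategy is to identify each of its five terms modulo $\log\QQQ^{\times}$, $\log\pi$, and $\log\Gamma_{2}(1/2)$.

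By the definition of the Quillen metric via the Selberg zeta function, $12\adeg\ac1(\lambda(\omega_{\XX/\BS})_{Q})$ contains a fixed rational multiple of $\log Z^{\prime}(Y(\Gamma),1)$, plus an $L^{2}$-versus-integral comparison term for the line $\det H^{0}(\omega_{\XX/\BS})\simeq\det S_{2}(\Gamma)^{\vee}$, whose $\QQQ$-structure is furnished by the $q$-expansion principle and the Hecke basis $\text{Prim}_{2}(\Gamma)$; this comparison contributes $\sum_{f}\log\langle f,f\rangle$. The divisor $\Delta_{\XX/\BS}$ is supported above $p$, hence is $\log\QQQ^{\times}$. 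The universal constant $\ac1(\OO(C(g,n)))$ is an explicit rational combination of $\log\pi$ and $\log$ of rationals. Finally, $\ac1(\psi_{W})$ decomposes as a sum over cusps; at each cusp the standard uniformizer $q$ is a $\QQQ$-rational generator of $\psi$, and its Wolpert norm is a rational power of $\Gamma_{2}(1/2)$ up to $\QQQ^{\times}$ via Wolpert's pinching expansion.

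The central step is the self-intersection $\pi_{\ast}(\ac1(\omega_{\XX/\BS}(\sigma_{1}+\ldots+\sigma_{n})_{\hyp})^{2})$, which must absorb the $L$-value. Exploiting the metrized Mumford isomorphism (Theorem \ref{theorem:Mumford_isometry}) that underlies Theorem A, I would match this self-intersection with a rational multiple of the Petersson arithmetic degree of $\lambda(\omega_{\XX/\BS})$ modulo $\log\pi$, $\log\Gamma_{2}(1/2)$, and $\log\QQQ^{\times}$, so that it reads $-\sum_{f}\log\langle f,f\rangle$ modulo these factors. The Rankin--Selberg relation
\begin{displaymath}
\pi\cdot\langle f,f\rangle\sim_{\QQQ^{\times}}L(2,\text{Sym}^{2}f,\overline{\chi_{f}})
\end{displaymath}
and the factorization \eqref{equation:-1} then convert this sum into a rational multiple of $\log L(0,\SM_{\Gamma})$ up to the allowed factors. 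Gathering the five contributions and exponentiating yields the claimed equivalence, with rational exponents $a,b,c$ explicit in $g$, $n$, the cusp structure of $X(\Gamma)$, and $C(g,n)$.

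The main obstacle is the precise $\QQQ^{\times}$-bookkeeping at the singular fibers over $p$: the integral Petersson normalization of $\lambda(\omega_{\XX/\BS})$, the $\QQQ$-frame of $\psi_{W}$ at each cusp, and the vertical components contributing to $\Delta_{\XX/\BS}$ interact nontrivially and must each be matched against the $q$-expansion frame up to algebraic units. These matchings reduce to the $q$-expansion principle together with the Katz--Mazur description of the reduction at $p$, but the local analysis has to be delicate enough to rule out stray transcendental factors from the non-smooth fibers, and to confirm that the only residual archimedean contributions are the powers of $\pi$ and $\Gamma_{2}(1/2)$ already accounted for.
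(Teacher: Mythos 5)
Your proposal identifies the right skeleton — Theorem A plus the Bost–Kühn evaluation and the Hida/Rankin–Selberg relation — but it goes wrong at three load-bearing points, and the first of them is the setup itself.

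\textbf{The base.} You propose to work over the ring of integers of a sufficiently large number field, invoking Deligne--Rapoport to produce a regular arithmetic model with the cusps as sections. The paper deliberately does \emph{not} do this: it takes the arithmetic ring to be $\big(\Spec K,\{\iota,\bar\iota\},F_\infty\big)$ with $K=\QQ(\mu_p)$, i.e.\ Krull dimension~$0$. Over $\Spec K$ the curve is smooth, so $\Delta_{\XX/\BS}=0$ and there is nothing to control at the singular fibers. This is not a cosmetic choice: the closing remark of Section~\ref{section:TheoremB} explicitly notes that the special fiber of $\XX_0(p)/\Int$ together with the sections $0,\infty$ is \emph{not} a pointed stable curve, so Theorem~A as stated is not available over an integral base for $\Gamma_0(p)$. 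The price paid for working over $K$ is that the arithmetic degree lands in $\RR/\log|K^\times|$ (and ultimately $\RR/\log|\QQQ^\times|$), which is exactly why the theorem is only an equivalence $\sim_{\QQQ^\times}$ rather than an equality — the ``delicate local analysis at $p$'' you anticipate is precisely what the paper is declining to do.

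\textbf{The $\psi_W$ term.} You claim that at each cusp the Wolpert norm of the $q$-uniformizer is ``a rational power of $\Gamma_2(1/2)$ up to $\QQQ^\times$ via Wolpert's pinching expansion.'' This is not what happens. The $q$-coordinate at $\infty$ is the rs-coordinate, so the Wolpert norm of $dq$ is~$1$; more usefully, if $\theta$ is a $K$-rational section of $\omega$ corresponding to a cusp form $f=\sum a_nq^n$ with $a_1\neq 0$, then $\|\sigma_\infty^\ast\theta\|_W=|a_1|\in|K^\times|$. The other cusps are handled by transport under the Atkin–Lehner involution and the diamond operators. The outcome (Proposition~\ref{proposition:thmB_2}) is $\adeg\ac1(\psi_W)=0$ in $\RR/\log|K^\times|$ — no $\Gamma_2(1/2)$ appears there at all. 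The $\Gamma_2(1/2)$ in the final formula comes only from rewriting $\exp(\zeta'(-1))$ via $\exp(\zeta'(-1))=2^{-1/36}\pi^{1/6}\Gamma_2(1/2)^{-2/3}$, after the $\zeta'(-1)$-terms from $E(g,n)$, $C(g,n)$, and the Bost–Kühn constant have been collected.

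\textbf{The self-intersection.} You say the self-intersection term ``must absorb the $L$-value'' and propose to show it equals $-\sum_f\log\langle f,f\rangle$ modulo the allowed factors. This is backwards and would make the $L$-value drop out of the final formula (or appear with the wrong multiplicity): the Petersson sum $\sum_f\log\langle f,f\rangle$ already enters once through $\adeg\ac1(\lambda_{L^2})$ inside $12\,\adeg\ac1(\lambda_Q)$, and that is the \emph{only} place the $L$-value should appear. The self-intersection is instead evaluated by the Bost–Kühn/Rohrlich theorem (Proposition~\ref{proposition:thmB_3}) as the explicit transcendental constant $4[\Gamma(1):\Gamma]\bigl(2\zeta'(-1)+\zeta(-1)\bigr)$, independent of the $L$-value. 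One also needs Lemma~\ref{lemma:thmB} — which is where $p\equiv 11\bmod 12$ is actually used — to push the $X_1(p)$ computation down to $X_0(p)$ via the unramified covering $\varphi$. Your reading of the congruence hypothesis (``forces $g\geq1$'') is true but misses this second, more essential use.

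In sum, the proposal is not a correct route: the integral-model setup conflicts with the stable-curve hypothesis of Theorem~A, the $\psi_W$ contribution is misidentified, and the self-intersection/$L^2$ bookkeeping double-counts the Petersson norms. The paper's actual argument is cleaner: apply Theorem~A over $\Spec K$ (so $\Delta=0$), split $\ac1(\lambda_Q)$ into the Selberg factor and the $L^2$ factor, evaluate the $L^2$ factor by Hida's theorem to produce $L(0,\SM_\Gamma)$, show $\adeg\ac1(\psi_W)=0$ by the $q$-expansion principle and Atkin–Lehner, plug in Bost–Kühn for the self-intersection, and convert $\zeta'(-1)$ to $\Gamma_2(1/2)$ at the very end.
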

The proof relies on Theorem A and the computation of Bost \cite{Bost} and K\"uhn \cite{Kuhn} for the arithmetic self-intersection number of $\omega_{X_{1}(p)/\QQ(\mu_{p})}(\text{cusps})_{\hyp}$. Undertaking the proof of Bost and K\"uhn --under the form of Rohrlich's modular version of Jensen's formula \cite{Rohrlich}-- and applying Theorem A to $(\PP^{1}_{\Int};0,1,\infty)$, one can also show the equality
\begin{equation}\label{equation:-1bis}
	Z^{\prime}(\Gamma(2),1)=4\pi^{5/3}\Gamma_{2}(1/2)^{-8/3},
\end{equation}
where $Z(\Gamma(2),s)$ is the Selberg zeta function of the congruence group $\Gamma(2)$. The details are given in our thesis \cite[Ch. 8]{GFM:thesis}. However, our method fails to provide the exact value of $Z^{\prime}(\PSL_{2}(\Int),1)$.

To the knowledge of the author, the special values $Z^{\prime}(Y(\Gamma),1)$ remained unknown. Even though it was expected that they encode interesting arithmetic information (see \cite{Hashimoto} and \cite{Sarnak2}), it is quite remarkable that they can be expressed in terms of the special values $L(0,\SM_{\Gamma})$. The introduction of $\SM_{\Gamma}$ in the formulation of the theorem was suggested by Beilinson's conjectures (see \cite{Soule} for an account) and two questions of Fried \cite[Sec. 4, p. 537 and App., Par. 4]{Fried}. Fried asks about the number theoretic content of the special values of Ruelle's zeta function and an interpretation in terms of regulators.\footnote{The Ruelle zeta function $R(s)$ of a hyperbolic Riemann surface is related to the Selberg zeta function $Z(s)$ by $R(s)=Z(s)/Z(s+1)$. For instance, $R^{\prime}(1)=Z^{\prime}(1)/Z(2)$.} Also Theorem B may be seen as an analogue of the product formula for number fields $\prod_{\nu}|x|_{\nu}=1$. This analogy alone deserves further study.

So far there have been other attempts of proof of Theorem A. This is the case of \cite[Part II]{Weng}. The method of loc. cit. seems to lead to an analogous statement up to an unknown universal constant. The advantage of our approach is that explicit computations --such as Theorem B and (\ref{equation:-1bis})-- are allowed. Moreover, in contrast with \cite[Fund. rel. IV', p. 280]{Weng}, our result is available for pointed stable curves of any genus.\footnote{The proof of loc. cit. presents a gap in genus $g\leq 2$ (last two lines in page 279). The case $g=2$ requires a justification whereas there are counterexamples to the principle of the proof in genus $0$ and $1$: there exist non-constant harmonic functions on $\SM_{g,n;\CC}$, for $g=0$, $n\geq 4$ and $g=1$, $n\geq 1$.  Also the case $g=0$ and $n=3$ is beyond the reach in \cite{Weng}.} Remark \ref{remark:psi_cont} below strengthens the importance of the case $g=0$ and $n=3$.

In his forthcoming thesis \cite{Hahn}, T. Hahn obtains different results related to Theorem A. His approach is much in the spirit of Jorgenson--Lundelius \cite{JL1}--\cite{JL3}. In contrast with our geometric considerations, Hahn works with a degenerating family of metrics on a fixed compact Riemann surface and studies the behavior of the corresponding family of heat kernels. Consequences are derived for the family of heat trace regularizations and spectral zeta functions. It is likely that the two approaches can be combined to produce more general statements.

Let us briefly describe the structure of this paper.

Section \ref{section:conventions} fixes the normalizations to be followed throughout the paper. Specially we define the Wolpert and Quillen metrics occurring in the statement of Theorem A. In Section \ref{section:mumford} we review the definition and properties of the clutching morphisms of Knudsen. We also recall the Mumford isomorphism on $\SCM_{g,0}$. With these tools at hand, we show how to derive the Mumford isomorphsim on $\SCM_{g,n}$ (Theorem \ref{theorem:Mumford_isomorphism}). Sections \ref{section:analytic1} and \ref{section:analytic2} are devoted to the analytic part of the proof of the main theorem. In Section \ref{section:analytic1} we introduce the Liouville metric on the first tautological line bundle and establish its continuity and behavior under pull-back by the clutching morphisms. In Section \ref{section:analytic2} we recall the results of Wolpert on the degeneration of the Selberg zeta function for degenerating families of compact hyperbolic Riemann surfaces \cite{Wolpert:Selberg}. We also review the theorem of Burger on the small eigenvalues of such  families of curves \cite{Burger}. We derive consequences for the Selberg zeta function as well as for the Quillen metric. In Section \ref{section:Mumford_isometry} we prove a metrized version of the Mumford isomorphism on $\SM_{g,n}$ (Theorem \ref{theorem:Mumford_isometry}). The strategy relies on sections \ref{section:mumford}--\ref{section:analytic2} and the arithmetic Riemann-Roch theorem of Gillet and Soul\'e. Theorem A is then deduced as an immediate application of Theorem \ref{theorem:Mumford_isomorphism} and Theorem \ref{theorem:Mumford_isometry}. The article finishes with the proof of Theorem B in Section \ref{section:TheoremB}.
\section{Conventions and notations}\label{section:conventions}
We fix some conventions and notations that will hold throughout this paper.

Let $g,n\geq 0$ be integers with $2g-2+n>0$. We define the real constants
\begin{displaymath}
	\begin{split}
		C(g,n)=&\exp\left((2g-2+n)\left(\frac{\zeta^{\prime}(-1)}{\zeta(-1)}+\frac{1}{2}\right)\right),\\
		E(g,n)=&2^{(g+2-n)/3}\pi^{-n/2}\\
		&\cdot\exp\left((2g-2+n)\left(2\zeta^{\prime}(-1)-\frac{1}{4}+\frac{1}{2}\log(2\pi)\right)\right),
	\end{split}
\end{displaymath}
where $\zeta$ denotes the Riemann zeta function. Notice the relations
\begin{align}
	&C(g+n,0)=C(g,n)C(1,1)^{n},\label{equation:C}\\
	&E(g+n,0)=\pi^{n}E(g,n)E(1,1)^{n}.\label{equation:E}
\end{align}

Let $X$ be a compact and connected Riemann surface of genus $g$ and $p_{1}$,$\ldots$, $p_{n}$ distinct points in $X$. The open subset $U=X\setminus\lbrace p_{1},\ldots,p_{n}\rbrace$ admits a complete hyperbolic riemannian metric, of constant curvature -1. Denote it by $ds_{\hyp,U}^{2}$. Via a fuchsian uniformization $U\simeq\Gamma\backslash\HH$, $\Gamma\subset\PSL_{2}(\RR)$ torsion free, the metric $ds_{\hyp,U}^{2}$ is obtained by descent from the $\Gamma$ invariant Riemann tensor on $\HH$
\begin{displaymath}
	ds_{\hyp,\HH}^{2}=\frac{dx^{2}+dy^{2}}{y^{2}},\,\,\,z=x+iy\in\HH.
\end{displaymath}
Associated to $ds_{\hyp,U}^{2}$ there is a hermitian metric on the complex line $T_{U}$, that we write $h_{U}$. It is obtained by descent from the metric $h_{\HH}$ on $T_{\HH}$ defined by the rule
\begin{displaymath}
	h_{\HH}\left(\frac{\pd}{\pd z},\frac{\pd}{\pd z}\right)=\frac{1}{2y^{2}}.
\end{displaymath}
The hermitian metric $h_{U}$ extends to a pre-log-log hermitian metric $\|\cdot\|_{\hyp}$ on $\omega_{X}(p_{1}+\ldots+p_{n})$ \cite[Sec. 7.3.2]{GFM}. The first Chern form of $\omega_{X}(p_{1}+\ldots+p_{n})_{\hyp}$, which is defined on $U$, coincides with the normalized K\"ahler form $\omega$ of $h_{U}$ (curvature $-1$ condition). The form $\omega$ is locally given by
\begin{displaymath}
	\omega=\frac{i}{2\pi}h_{U}\left(\frac{\pd}{\pd z},\frac{\pd}{\pd z}\right) dz\wedge d\cz.
\end{displaymath}
The volume of $X$ with respect to $\omega$ is $2g-2+n$.

For every puncture $p_{j}$ there is a conformal coordinate $z$ with $z(p_{j})=0$, by means of which a small punctured disc $D^{*}(0,\varepsilon)\subset\CC$ with the Poincar\'e metric
\begin{displaymath}
	ds_{P}^{2}=\left(\frac{|dz|}{|z|\log|z|}\right)^{2}
\end{displaymath}
isometrically embeds into $(U,ds_{\hyp,U}^{2})$. Such a coordinate is unique up to rotation and is called a \textit{rs} coordinate at the cusp $p_{j}$.
\begin{definition}[Wolpert metric \cite{Wolpert:cusps}, Def. 1]\label{definition:Wolpert_metric}
Let $z$ be a \textit{rs} coordinate at the cusp $p_{j}$. The \textit{Wolpert metric} on the complex line $\omega_{X,p_{j}}$ is defined by
\begin{displaymath}
	\|dz\|_{W,p_{j}}=1.
\end{displaymath}
The tensor product $\otimes_{j}\omega_{X,p_{j}}$ is equipped with the tensor product of Wolpert metrics, and we write $\|\cdot\|_{W}$ for the resulting metric.
\end{definition}
The complex vector space $\C^{\infty}(X,\omega_{X})(\supset H^{0}(X,\omega_{X}))$ is equipped with the non-degenerate hermitian form
\begin{displaymath}
	\langle\alpha,\beta\rangle_{0}=\frac{i}{2\pi}\int_{X}\alpha\wedge\overline{\beta}.
\end{displaymath}
The space $H^{1}(X,\omega_{X})^{\vee}$ is canonically isomorphic to $H^{0}(X,\OO_{X})$ $=\CC$ via the analytic Serre duality. Since $\omega$ is integrable, the $L^{2}$ metric on $\C^{\infty}(X,\OO_{X})(\supset H^{0}(X,\OO_{X}))$ with respect to $h_{U}$ is well defined. If $\mathbf{1}$ is the function with constant value 1, then
\begin{displaymath}
	\langle\mathbf{1},\mathbf{1}\rangle_{1}=\int_{X}\omega=2g-2+n.
\end{displaymath}
The complex line $\lambda(\omega_{X})=\det H^{0}(X,\omega_{X})\otimes\det H^{1}(X,\omega_{X})^{-1}$ is endowed with the determinant metric build up from $\langle\cdot,\cdot\rangle_{0}$ and $\langle\cdot,\cdot\rangle_{1}$. We refer to it by $\|\cdot\|_{L^{2}}$.

We next recall the definition of the Selberg zeta function of $U$ (see \cite{Hejhal}). For every real $l>0$ the function
\begin{displaymath}
	Z_{l}(s)=\prod_{k=1}^{\infty}(1-e^{-(s+k)l})^{2}
\end{displaymath}
is holomorphic in $\Real s>0$. In a first step, the Selberg zeta function of $U$ is defined by the absolutely convergent product
\begin{displaymath}
	Z(U,s)=\prod_{\gamma}Z_{l(\gamma)}(s),\,\,\,\Real s>1,
\end{displaymath}
running over the simple closed non-oriented geodesics of the hyperbolic surface $(U,ds_{\hyp,U}^{2})$. Then one shows that $Z(U,s)$ extends to a meromorphic function on $\CC$, with a simple zero at $s=1$.
\begin{definition}[Quillen metric]\label{definition:Quillen}
We define the \textit{Quillen metric} on $\lambda(\omega_{X})$, attached to the hyperbolic metric on $U$, to be
\begin{displaymath}
	\|\cdot\|_{Q}=(E(g,n)Z^{\prime}(U,1))^{-1/2}\|\cdot\|_{L^{2}}.
\end{displaymath}
\end{definition}
We denote by $\OO(C(g,n))$ the trivial line bundle equipped with the norm $C(g,n)|\cdot|$, where $|\cdot|$ stands for the usual absolute value.

Let $(\OO, \Sigma, F_{\infty})$ be an arithmetic ring of Krull dimension at most $1$. Put $\BS=\Spec\OO$ and denote its generic point by $\eta$. Let $(\pi:\XX\rightarrow\BS;\sigma_{1},\ldots,\sigma_{n})$ be a $n$-pointed stable curve of genus $g$, in the sense of Knudsen and Mumford \cite[Def. 1.1]{Knudsen}. Assume $\XX_{\eta}$ regular. We write $\UU=\XX\setminus\cup_{j}\sigma_{j}(\BS)$. By definition $\XX_{\eta}$ is geometrically connected. For every complex embedding $\tau\in\Sigma$, the preceding constructions apply to $\UU_{\tau}(\CC)\subset\XX_{\tau}(\CC)$. Varying $\tau$, we obtain arakelovian hermitian line bundles $\omega_{\XX/\BS}(\sigma_{1}+\ldots+\sigma_{n})_{\hyp}$, $\psi_{W}=\otimes_{j}\sigma_{j}^{\ast}(\omega_{\XX/\BS})_{W}$, $\lambda(\omega_{\XX/\BS})_{Q}$ and $\OO(C(g,n))$. Similar notations will be employed for the analogous constructions over more general bases. In this way, we shall consider the \textquotedblleft universal situation\textquotedblright\, $(\pi:\SCC_{g,n}\rightarrow\SCM_{g,n};\sigma_{1},\ldots,\sigma_{n})$, where $\SCM_{g,n}$ is the Deligne-Mumford stack of $n$-pointed stable curves of genus $g$, $\SCC_{g,n}$ the universal family and $\sigma_{1},\ldots,\sigma_{n}$ the universal sections. We then have \textquotedblleft universal hermitian line bundles\textquotedblright\, $\lambda(\omega_{\SCC_{g,n}/\SCM_{g,n}})\mid_{\SM_{g,n};Q}$, $\sigma^{\ast}_{j}(\omega_{\SCC_{g,n}/\SCM_{g,n}})\mid_{\SM_{g,n};W}$, etc. ($\SM_{g,n}$ is the open substack of smooth curves). When the context is clear enough, we freely write $\lambda_{g,n;Q}$, $\sigma_{j}^{\ast}(\omega_{\SCC_{g,n}/\SCM_{g,n}})_{W}$, $\psi_{g,n;W}$, etc.

If $F$ is an algebraic stack of finite type over $\Spec\Int$, then we denote by $F^{\an}$  the analytic stack associated to $F_{\CC}$. For instance, applied to $\SM_{g,n}$ and $\SCM_{g,n}$, we obtain the analytic stack $\SM_{g,n}^{\an}$ of $n$-punctured Riemann surfaces of genus $g$ and its Deligne-Mumford stable compactification $\SCM_{g,n}^{\an}$. If $\mathcal{E}_{1}\rightarrow\mathcal{E}_{2}$ is a morphism of sheaves over $F$, there is an associated morphism $\mathcal{E}_{1}^{\an}\rightarrow\mathcal{E}_{2}^{\an}$ over $F^{\an}$. Finally, to a morphism $F\rightarrow G$ between algebraic stacks of finite type over $\Spec\Int$, corresponds a morphism between analytic stacks $F^{\an}\rightarrow G^{\an}$.

Our standard references for the theory of algebraic stacks are \cite{DM} and \cite{LMB}. For pointed stable curves we refer to the original article of Knudsen \cite{Knudsen}. Concerning the arithmetic intersection theory, we follow the extension of the theory of Gillet-Soul\'e \cite{GS} developed by Burgos, Kramer and K\"uhn \cite{BKK}. 
\section{Knudsen's clutching and the Mumford isomorphism on $\SCM_{g,n}$}\label{section:mumford}
\subsection{Preliminaries}
Let $g,n\geq 0$ be integers with $2g-2+n>0$. We denote by $\SCM_{g,n}\rightarrow\Spec\Int$ the Deligne-Mumford stack of $n$-pointed stable curves of genus $g$, and by $\pi:\SCC_{g,n}\rightarrow\SCM_{g,n}$ the universal curve \cite{Knudsen}. The morphism $\pi$ has $n$ universal sections, $\sigma_{1},\ldots,\sigma_{n}$. The first theorem compiles some geometric features of $\SCM_{g,n}$.
\begin{theorem}\label{theorem:irreducible_fibers}
i. $\SCM_{g,n}$ is a proper and smooth algebraic stack over $\Spec\Int$ of relative dimension $3g-3+n$. The substack $\pd\SM_{g,n}$ classifying singular curves is a divisor with normal crossings, relative to $\Spec\Int$.

ii. The fibers of  $\SCM_{g,n}$ over $\Spec\Int$ are geometrically irreducible.	
\end{theorem}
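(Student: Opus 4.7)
Both parts are classical and my strategy is to reduce everything to the foundational results of Deligne--Mumford (for the case $n=0$) and to Knudsen's inductive construction of $\SCM_{g,n}$ (for the passage from $n$ to $n+1$).

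For part (i), the starting point is the Deligne--Mumford theorem that $\SCM_{g,0}$ is a proper smooth Deligne--Mumford stack over $\Spec\Int$ of relative dimension $3g-3$, with boundary a relative normal crossings divisor. Knudsen's construction exhibits $\SCM_{g,n+1}$ as (a modification by stabilization of) the universal stable curve $\SCC_{g,n}\to\SCM_{g,n}$; since this universal curve is a flat proper morphism of relative dimension $1$, properness, smoothness, and the relative dimension $3g-3+n$ all propagate by induction on $n$. The normal crossings property of $\pd\SM_{g,n}$ is then read off from the local deformation theory of pointed stable curves: near a point classifying a curve with $\delta$ nodes, a versal deformation has coordinates $(t_{1},\ldots,t_{\delta}, s_{1},\ldots,s_{3g-3+n-\delta})$ such that $t_{i}=0$ cuts out the smooth divisor of deformations preserving the $i$-th node, and the $\delta$ branches meet transversally along their intersection.

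For part (ii), I would first recall the geometric irreducibility of the fibers of $\SCM_{g,0}\to\Spec\Int$, which is the celebrated Deligne--Mumford theorem, proved via Fulton's irreducibility of the Hurwitz schemes and the stable reduction theorem. To pass to arbitrary $n$, proceed by induction using the forgetful morphism $p:\SCM_{g,n+1}\to\SCM_{g,n}$. By Knudsen's construction, $p$ coincides, up to contraction of the unstable rational components that may appear, with the universal stable curve; in particular it is flat, proper, and has geometrically connected fibers, since pointed stable curves are by definition connected. Given a geometric point of $\Spec\Int$, write $\mathcal{N}_{n}$ for the corresponding fiber of $\SCM_{g,n}$; by induction $\mathcal{N}_{n}$ is irreducible, and the induced map $\mathcal{N}_{n+1}\to\mathcal{N}_{n}$ is proper flat with geometrically connected fibers, so $\mathcal{N}_{n+1}$ is irreducible as well.

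The only genuinely deep ingredient is the irreducibility of $\SCM_{g,0}$ in arbitrary characteristic, and this is the step I regard as the main obstacle: it is the one place where one must appeal to substantial external input (Fulton's Hurwitz scheme theorem or an equivalent device). Once this is granted, both the inductive propagation of irreducibility via the forgetful morphism and the smoothness/normal crossings assertions of (i) are rather formal from the point of view of Knudsen's theory.
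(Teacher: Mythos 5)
Your overall strategy for part (ii) — base cases plus induction on $n$ via the forgetful morphism $\SCM_{g,n+1}\to\SCM_{g,n}$ (equivalently, Knudsen's identification of $\SCM_{g,n+1}$ with the universal curve $\SCC_{g,n}$) — is the same as the paper's. However, there is a genuine gap in the base cases: you anchor the induction only at $\SCM_{g,0}$, which exists only for $g\geq 2$. For $g=0$ the minimal admissible $n$ is $3$, and for $g=1$ it is $1$; neither is reached by your induction. The paper explicitly supplies these missing base cases, citing Deligne--Rapoport for $\SCM_{1,1}$ and Keel for $\SCM_{0,n}$, $n\geq 3$. Without them your argument simply does not cover genus $0$ or $1$, and these are precisely the genera that matter most for the applications in the paper (the case $(g,n)=(0,3)$ is singled out as especially significant).

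There is also an imprecision in the inductive step. You assert that a proper flat morphism with geometrically connected fibers onto an irreducible base has irreducible source. This is false as stated: connected fibers give only connectedness of the total space, and indeed the fibers of the universal curve over the boundary of $\SCM_{g,n}$ are reducible nodal curves. To upgrade connectedness to irreducibility you need extra input — either that the generic fiber (a smooth geometrically connected curve) is irreducible, or that the total space is regular (which holds by part (i), so regular plus connected implies irreducible). The paper is more careful here: it restricts to the open locus $\pi^{-1}(\SM_{g,n})$, where the map is in addition \emph{smooth}, deduces irreducibility of that locus over each geometric fiber of $\Spec\Int$, and then shows $\pi^{-1}(\SM_{g,n})$ is fiberwise dense in $\SCC_{g,n}$ because the complement $\pi^{-1}(\pd\SM_{g,n})\cup\bigcup_j\sigma_j(\SCM_{g,n})$ is a relative normal crossings divisor. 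You should either adopt that route or make explicit which of the two extra hypotheses you are invoking.

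Part (i) is a direct citation of Knudsen's Theorem 2.7 in the paper, so your sketch is fine, though the phrase that smoothness over $\Spec\Int$ "propagates" through the universal curve elides the fact that $\SCC_{g,n}\to\SCM_{g,n}$ itself is not smooth; the smoothness of $\SCM_{g,n+1}$ over $\Int$ rests on the local model $xy=t$ at the nodes, not on formal transitivity of smoothness.
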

\begin{proof}
The first assertion is \cite[Th. 2.7]{Knudsen}. The second assertion is due to Deligne-Mumford \cite{DM} in the case $n=0$ and $g\geq 2$, to Deligne-Rapoport \cite{DR} in the case $n=g=1$ and Keel \cite{Keel} in the case $g=0$ and $n\geq 3$. For the remaining cases we proceed by induction. Assume the claim for some couple $(g,n)$, with $2g-2+n>0$. We derive the claim for $(g,n+1)$. Recall that $\SCC_{g,n}$ gets identified with $\SCM_{g,n+1}$ via Knudsen's contraction morphsim $c:\SCM_{g,n+1}\rightarrow\SCC_{g,n}$ \cite[Sec. 2]{Knudsen}. It then suffices to show that $\SCC_{g,n}$ has geometrically irreducible fibers over $\Spec\Int$. If $\SM_{g,n}$ is the dense open substack of $\SCM_{g,n}$ classifying non-singular curves, then $\pi^{-1}(\SM_{g,n})\rightarrow\SM_{g,n}$ is proper, smooth with geometrically connected fibers (by definition of pointed stable curve \cite[Def. 1.1]{Knudsen}). It follows that $\pi^{-1}(\SM_{g,n})$ has geometrically irreducible fibers over $\Spec\Int$. To conclude, we notice that $\pi^{-1}(\pd\SM_{g,n})\cup\sigma_{1}(\SCM_{g,n})\cup\ldots\cup\sigma_{n}(\SCM_{g,n})$ is a divisor with normal crossings relative to $\Spec\Int$. Therefore $\pi^{-1}(\SM_{g,n})$ is fiberwise dense in $\SCC_{g,n}$, over $\Spec\Int$. The proof is complete.
\end{proof}
\begin{corollary}\label{corollary:irreducible_fibers}
Let $N\geq 1$ be an integer. For an algebraic stack of the form $\SM:=\SCM_{g_{1},n_{1}}\times\ldots\times\SCM_{g_{r},n_{r}}$ we have
\begin{displaymath}
	H^{0}(\SM\times\Spec\Int[1/N],\Gm)=\Int[1/N]^{\times}.
\end{displaymath}
\end{corollary}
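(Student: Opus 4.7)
The assertion is a global-sections statement for a proper stack over $\Spec\Int[1/N]$ with geometrically integral fibres, so the strategy is classical: reduce $H^{0}(-,\Gm)$ to the units in $H^{0}(-,\OO)$ and then identify the latter with $\Int[1/N]$ via proper pushforward of the structure sheaf.

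\textbf{Step 1 (geometry of $\SM$).} First I would observe that $\SM$ is proper and smooth over $\Spec\Int$, being a fibre product over $\Spec\Int$ of stacks to which Theorem \ref{theorem:irreducible_fibers}.i applies. In particular the structure morphism $p:\SM\times\Spec\Int[1/N]\to\Spec\Int[1/N]$ is proper and flat. Next I would verify that the geometric fibres of $p$ are irreducible. By Theorem \ref{theorem:irreducible_fibers}.ii each factor $\SCM_{g_{i},n_{i}}$ has geometrically irreducible fibres over $\Spec\Int$, so base change to an algebraic closure $\overline{k(s)}$ of any residue field $k(s)$ reduces the statement to the fact that a finite product (over an algebraically closed field) of irreducible, reduced, proper Deligne--Mumford stacks is again irreducible and reduced. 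This can be checked on smooth \'etale atlases, where it is the usual statement for finite-type schemes over an algebraically closed field.

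\textbf{Step 2 (global functions are constant on fibres).} Let $s\in\Spec\Int[1/N]$ and let $X=\SM_{s}$. Then $X$ is a proper, geometrically integral Deligne--Mumford stack over the field $k(s)$. I would invoke the standard fact $H^{0}(X,\OO_{X})=k(s)$: this can be proved either by passage to the coarse moduli space, which is a proper geometrically integral algebraic space over $k(s)$, or directly by taking a smooth \'etale presentation $U\to X$ and using Galois/faithfully flat descent from $\overline{k(s)}$, where the analogous statement for integral proper schemes is classical. In particular $H^{0}(X,\Gm)=k(s)^{\times}$.

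\textbf{Step 3 (pushforward and conclusion).} The flat proper morphism $p$ has geometrically integral fibres, and by Step 2 every fibre satisfies $H^{0}(\OO)=$ base field. By cohomology and base change (valid in the Deligne--Mumford setting via an \'etale atlas, or via the coarse moduli space) one concludes that
\begin{displaymath}
p_{\ast}\OO_{\SM\times\Spec\Int[1/N]}=\OO_{\Spec\Int[1/N]}.
\end{displaymath}
Taking global sections gives $H^{0}(\SM\times\Spec\Int[1/N],\OO)=\Int[1/N]$. If $f$ is a global section of $\Gm$, then both $f$ and $f^{-1}$ belong to this ring, so $f\in\Int[1/N]^{\times}$; the reverse inclusion is trivial.

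\textbf{Main obstacle.} The only non-formal point is the stacky version of Step 2 (equivalently, the equality $p_{\ast}\OO=\OO$ in Step 3): one has to justify that the standard scheme-theoretic results about $H^{0}(\OO)$ on proper geometrically integral objects over a field carry over to Deligne--Mumford stacks. This is where I would either invoke the tame coarse moduli of $\SCM_{g,n}$ (and its products) or, more robustly, argue on a smooth \'etale atlas together with Galois descent from the algebraic closure. Everything else is a straightforward bookkeeping of properness, flatness and geometric irreducibility.
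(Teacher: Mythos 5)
Your proof is correct and fleshes out the same line of reasoning the paper uses: invoke properness, smoothness, and geometric irreducibility of the fibres from Theorem~\ref{theorem:irreducible_fibers} and then conclude via $p_{\ast}\OO=\OO$. The paper compresses all of your Steps 1--3 into the single sentence ``The corollary follows,'' so you have simply made explicit the standard cohomology-and-base-change argument that the paper leaves implicit.
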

\begin{proof}
By Theorem \ref{theorem:irreducible_fibers}, $\SM\times\Spec\Int[1/N]$ is a proper and smooth algebraic stack over $\Spec\Int[1/N]$, with geometrically irreducible fibers. The corollary follows.
\end{proof}
We now define the tautological line bundles on $\SCM_{g,n}$. 
\begin{definition}[Tautological line bundles \cite{Knudsen}, \cite{Mumford:enumerative}] 
The \textit{tautological line bundles} on $\SCM_{g,n}$ are
\begin{displaymath}
	\begin{split}
		&\lambda_{g,n}=\lambda(\omega_{\SCC_{g,n}/\SCM_{g,n}})=\det(R\pi_{\ast}\omega_{\SCC_{g,n}/\SCM_{g,n}}),\\
		&\delta_{g,n}=\OO(\pd\SM_{g,n}),\\
		&\psi_{g,n}^{(j)}=\sigma_{j}^{\ast}\omega_{\SCC_{g,n}/\SCM_{g,n}},\,\,\,j=1,\ldots,n,\\
		&\psi_{g,n}=\otimes_{j}\psi_{g,n}^{(j)},\\
		&\kappa_{g,n}=\langle\omega_{\SCC_{g,n}/\SCM_{g,n}}(\sigma_{1}+\ldots+\sigma_{n}),\omega_{\SCC_{g,n}/\SCM_{g,n}}(\sigma_{1}+\ldots+\sigma_{n})\rangle,
	\end{split}
\end{displaymath}
where $\langle\cdot,\cdot\rangle$ denotes the Deligne pairing \cite[XVIII]{SGA4}, \cite{Elkik}.
\end{definition}
\subsection{Clutching morphisms}
We proceed to recall Knudsen's clutching morphism \cite[Part II, Sec. 3]{Knudsen}. The basic construction is resumed in the following theorem.
\begin{theorem}[Knudsen]\label{theorem:clutching}
Let $\pi:X\rightarrow S$ be a pre-stable curve (i.e. flat, proper, whose fibers are geometrically connected with at worst ordinary double points). Let $\sigma_{1},\sigma_{2}:S\rightarrow X$ be given disjoint sections of $\pi$. Suppose that $\pi$ is smooth along $\sigma_{1},\sigma_{2}$. Then there is a diagram
\begin{displaymath}
	\xymatrix{
		& X\ar[r]^{p}\ar[d]^{\pi}
		& X^{\prime}\ar[d]_{\pi^{\prime}}\\
		&S\ar@{=}[r]\ar@/^/[u]^{\sigma_{i}}
		&S\ar@/_/[u]_{\sigma}
	}
\end{displaymath}
such that:

i. $\sigma=p\sigma_{1}=p\sigma_{2}$ and $p$ is universal with this property;

ii. $p$ is finite;

iii. $\pi^{\prime}:X^{\prime}\rightarrow S$ is a prestable curve, fiberwise obtained by identification of $\sigma_{1}$ and $\sigma_{2}$ in an ordinary double point;

iv. for every open $U\subseteq X^{\prime}$ we have
\begin{displaymath}
	H^{0}(U,\OO_{X^{\prime}})=\lbrace f\in H^{0}(p^{-1}(U),\OO_{X})\mid\sigma_{1}^{\ast}(f)=\sigma_{2}^{\ast}(f)\rbrace;
\end{displaymath}

v. let $\psi^{(j)}=\sigma_{j}^{\ast}\omega_{X/S}$, $j=1,2$. There is an exact sequence
\begin{equation}\label{equation:1}
	0\rightarrow\sigma_{\ast}(\psi^{(1)}\otimes\psi^{(2)})\overset{\alpha}{\rightarrow}\Omega_{X^{\prime}/S}\rightarrow p_{*}\Omega_{X/S}\rightarrow 0.
\end{equation}
The arrow $\alpha$ of (\ref{equation:1}) is defined as follows. Let $\mathcal{I}_{j}$ (resp. $\mathcal{I}$) be the ideal sheaf of the image of $\sigma_{j}$ (resp. $\sigma$) in $X$ (resp. $X^{\prime}$), $j=1,2$. Consider the natural morphisms $p_{j}:\mathcal{I}/\mathcal{I}^{2}\rightarrow p_{\ast}(\mathcal{I}_{j}/\mathcal{I}_{j}^{2})=\sigma_{\ast}\psi^{(j)}$. There is a natural isomorphism
\begin{equation}\label{equation:2}
	\begin{split}
		\bigwedge^{\hspace{0.5cm}2}\mathcal{I}/\mathcal{I}^{2}&\longrightarrow\sigma_{\ast}(\psi^{(1)}\otimes\psi^{(2)})\\
		u\wedge v&\longmapsto p_{1}u\otimes p_{2}v-p_{1}v\otimes p_{2}u.
	\end{split}
\end{equation}
Via the isomorphism (\ref{equation:2}), $\alpha$ gets identified with $u\wedge v\mapsto udv$.
\end{theorem}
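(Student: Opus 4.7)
The plan is to construct $X'$ as a Zariski-local pushout on $S$ and then verify the listed properties by reducing to an explicit local model at the new node. Since $\sigma_{1}(S)\cap\sigma_{2}(S)=\emptyset$, given any affine open $V=\Spec B\subset S$ I can produce an affine open $U=\Spec A\subset\pi^{-1}(V)$ containing both $\sigma_{1}(V)$ and $\sigma_{2}(V)$. Writing $\sigma_{j}^{\ast}:A\twoheadrightarrow B$ for the corresponding surjections, I set
\[
A':=\lbrace a\in A\mid\sigma_{1}^{\ast}(a)=\sigma_{2}^{\ast}(a)\rbrace=A\times_{B\times B,\Delta}B,
\]
$U':=\Spec A'$, and take $p:U\to U'$ to be the obvious inclusion-induced map. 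This is a fibered product of $V$-algebras, hence stable under Zariski localisation on $V$, and so the construction glues to a scheme $X'$ over $S$ equipped with an affine map $p:X\to X'$. Property (iv) is built into the definition of $\OO_{X'}$ and property (i) is the universal property of $A\times_{B\times B}B$. For finiteness (ii), since $I_{1}+I_{2}=A$ there exists $e\in A$ with $\sigma_{1}^{\ast}(e)=1$ and $\sigma_{2}^{\ast}(e)=0$; a short manipulation using $e^{2}-e\in A'$ together with the surjectivity of $\sigma_{j}^{\ast}\mid_{A'}:A'\twoheadrightarrow B$ shows that $A$ is generated as an $A'$-module by $1$ and $e$. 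In particular $\pi':X'\to S$ is proper.

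For property (iii) I pass to formal neighbourhoods of a point $\sigma(s)$, $s\in S$. Because $\pi$ is smooth along $\sigma_{j}$, I have $\widehat{\OO}_{X,\sigma_{j}(s)}\cong\widehat{B}_{s}[[x_{j}]]$ with $\sigma_{j}$ cut by $x_{j}=0$, and the fibered product description yields
\[
\widehat{\OO}_{X',\sigma(s)}\cong\lbrace(f_{1},f_{2})\in\widehat{B}_{s}[[x_{1}]]\times\widehat{B}_{s}[[x_{2}]]\mid f_{1}(0)=f_{2}(0)\rbrace\cong\widehat{B}_{s}[[x_{1},x_{2}]]/(x_{1}x_{2}),
\]
the universal formal ordinary double point over $\widehat{B}_{s}$. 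This proves that $\pi':X'\to S$ is pre-stable, agrees with $\pi$ away from $\sigma(S)$, and acquires exactly the new nodal section $\sigma(S)$.

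The sequence (\ref{equation:1}) is also verified locally. Outside $\sigma(S)$, $p$ is an isomorphism and the middle arrow $\Omega_{X'/S}\to p_{\ast}\Omega_{X/S}$ is an isomorphism. At a formal neighbourhood of $\sigma(s)$, the model above gives
\[
\Omega_{X'/S}^{\wedge}=(\widehat{\OO}_{X',\sigma(s)}\,dx_{1}\oplus\widehat{\OO}_{X',\sigma(s)}\,dx_{2})/(x_{1}dx_{2}+x_{2}dx_{1})
\]
and $(p_{\ast}\Omega_{X/S})^{\wedge}=\widehat{B}_{s}[[x_{1}]]\,dx_{1}\oplus\widehat{B}_{s}[[x_{2}]]\,dx_{2}$, with the natural map sending $dx_{1}\mapsto(dx_{1},0)$ and $dx_{2}\mapsto(0,dx_{2})$. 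A direct computation shows the kernel is annihilated by the ideal $(x_{1},x_{2})$ and generated over $\widehat{B}_{s}$ by the class of $x_{1}dx_{2}=-x_{2}dx_{1}$, so it is a free rank-one module supported on $\sigma(S)$. To match it with $\sigma_{\ast}(\psi^{(1)}\otimes\psi^{(2)})$ via the prescribed $\alpha$, I use that the conormal sheaf $\mathcal{I}/\mathcal{I}^{2}$ is free of rank $2$ over $\widehat{B}_{s}$ with basis $\overline{x_{1}},\overline{x_{2}}$ and that $p_{j}(\overline{x_{k}})$ equals the distinguished generator $dx_{j}\in\psi^{(j)}$ when $k=j$ and vanishes otherwise; then (\ref{equation:2}) sends $\overline{x_{1}}\wedge\overline{x_{2}}$ to $dx_{1}\otimes dx_{2}$, and the formula $u\wedge v\mapsto u\,dv$ sends this element to $x_{1}\,dx_{2}$, which is precisely the kernel generator.

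The main obstacle I expect is not any single calculation but the clean globalisation of the construction: one must verify that the affine pushouts $\Spec(A\times_{B\times B}B)$ are compatible with localisation on $V=\Spec B$ and that the formation of $X'$ commutes with flat base change on $S$, so that the local structure at $\sigma(s)$ can be read off the completed fibers of $\pi$. Once these functorialities are in place, (i)--(v) reduce to the nodal model above, and the sign convention in (\ref{equation:2}) is dictated by the requirement that $u\wedge v\mapsto u\,dv$ give the correct intrinsic realisation of the inclusion $\sigma_{\ast}(\psi^{(1)}\otimes\psi^{(2)})\hookrightarrow\Omega_{X'/S}$.
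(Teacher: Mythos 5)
The paper does not actually prove this statement: it is quoted verbatim as Knudsen's theorem and attributed to \cite{Knudsen}, with no \verb|proof| environment following. So there is no in-paper argument to compare against; I can only evaluate your sketch on its own terms.

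Your outline is the natural one and is, in its essentials, sound: build $X'$ locally as $\Spec(A\times_{B\times B,\Delta}B)$, read off (i) and (iv) from the universal property of the fiber product, prove finiteness via the idempotent $e$ (your argument that $A=A'+A'e$ is correct: for $a\in A$ the element $a-(\sigma_{1}^{\ast}a-\sigma_{2}^{\ast}a)e$ lies in $A'$), and then verify (iii) and (v) on the completed local ring $\widehat{B}_{s}[[x_{1},x_{2}]]/(x_{1}x_{2})$ at the new node. The computation of the kernel of $\Omega_{X'/S}\to p_{\ast}\Omega_{X/S}$ as the free rank-one module on $x_{1}\,dx_{2}$, and its identification with $\sigma_{\ast}(\psi^{(1)}\otimes\psi^{(2)})$ via $\overline{x_{1}}\wedge\overline{x_{2}}\mapsto dx_{1}\otimes dx_{2}\mapsto x_{1}\,dx_{2}$, is correct.

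Two places are thinner than they should be, though you flag the general issue honestly. First, the claim that \emph{any} affine $V=\Spec B\subset S$ admits an affine open $U\subset\pi^{-1}(V)$ containing both $\sigma_{1}(V)$ and $\sigma_{2}(V)$ is not automatic; it holds only after shrinking $V$, and even then requires an argument (e.g. spreading out a common affine neighbourhood of $\sigma_{1}(s),\sigma_{2}(s)$ in the fibre $X_{s}$, or invoking a relatively ample sheaf). Second, $\Spec A'$ is only one chart of $X'$; you need to say explicitly that $X'$ is obtained by gluing $\Spec A'$ with the open $\pi^{-1}(V)\setminus(\sigma_{1}(V)\cup\sigma_{2}(V))$ along which $p$ is an isomorphism, and that these local constructions agree on overlaps because the fiber product $A\times_{B\times B}B$ commutes with Zariski localisation on $B$ (which you note) \emph{and} with localisation on $A$ away from the sections. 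These are the points Knudsen treats carefully in \cite[Part~II]{Knudsen}; once they are filled in, the reduction to the formal nodal model and the verification of (i)--(v) go exactly as you describe.
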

The clutching morphism is defined by a three step construction.
\begin{construction}[Clutching morphism]\label{construction:clutching}
\hspace{0cm}\\
\textit{Step 1.} Let $\pi_{1}:X_{1}\rightarrow S$ be a $n_{1}+1$-pointed stable curve of genus $g_{1}$, with sections $\sigma_{1}^{(1)},\ldots,\sigma_{n_{1}+1}^{(1)}$. In addition consider the 3-pointed stable curve of genus 0, $(\pi:\PP^{1}_{S}\rightarrow S;0,1,\infty)$. By Theorem \ref{theorem:clutching} we can attach $\PP^{1}_{S}$ to $X_{1}$ by identification of the sections $1$ and $\sigma_{n_{1}+1}^{(1)}$. We obtain a new pointed stable curve $X_{1}^{\prime}$ of genus $g_{1}$, with sections $\sigma_{1}^{(1)},\ldots,\sigma_{n_{1}}^{(1)}$ and $0_{1}$, $\infty_{1}$. We proceed analogously with a $n_{2}+1$ pointed stable curve $\pi_{2}:X_{2}\rightarrow S$, with sections $\sigma_{1}^{(2)},\ldots,\sigma_{n_{2}+1}^{(2)}$.\\
\textit{Step 2.} We apply Theorem \ref{theorem:clutching} to glue $X_{1}^{\prime}$ and $X_{2}^{\prime}$ along the sections $\infty_{1}$ and $\infty_{2}$. We obtain a new pointed stable curve $X$ of genus $g_{1}+g_{2}$ with sections $\sigma_{1}^{(1)},\ldots,\sigma_{n_{1}}^{(1)}$, $\sigma_{1}^{(2)},\ldots,\sigma_{n_{2}}^{(2)}$, $0_{1}$, $0_{2}$.\\
\textit{Step 3.} We contract the sections $0_{1}$ and $0_{2}$ \cite[Prop. 2.1]{Knudsen}. We obtain a $n_{1}+n_{2}$ pointed stable curve of genus $g_{1}+g_{2}$.
\end{construction}
\begin{theorem}[Knudsen]\label{theorem:clutching2}
Construction \ref{construction:clutching} defines a morphism of algebraic stacks
\begin{displaymath}
	\beta:\SCM_{g_{1},n_{1}+1}\times\SCM_{g_{2},n_{2}+1}\longrightarrow\SCM_{g_{1}+g_{2},n_{1}+n_{2}},
\end{displaymath}
which is representable, finite and unramified. If moreover $g_{1}\neq g_{2}$ or $n_{1}+n_{2}\neq 0$, then $\beta$ is a closed immersion. 
\end{theorem}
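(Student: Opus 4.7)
My plan is to verify the four assertions using Theorem \ref{theorem:clutching} together with standard deformation theory for pointed stable curves. For representability, I would check that $\beta$ is faithful on automorphism groups. Given an $S$-point of the source with clutched curve $X/S$, the two irreducible components of each geometric fiber, the distinguished node joining them, and its two branches can all be intrinsically recovered from $X$ by normalizing at the node; the branches then yield back the sections $\sigma^{(1)}_{n_{1}+1}$ and $\sigma^{(2)}_{n_{2}+1}$. Any automorphism of the source pair inducing the identity on $X$ therefore fixes every labeled section on each component, forcing it to be the identity. For finiteness, Theorem \ref{theorem:irreducible_fibers} gives properness of source and target over $\Spec\Int$, hence of $\beta$; quasi-finiteness over a geometric point $\bar x$ holds because a clutched factorization of $X_{\bar x}$ is determined by a choice of node together with an ordering of its two branches, of which there are only finitely many. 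Properness and quasi-finiteness together give finiteness.

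For unramifiedness I would argue via deformations. The tangent space to $\SCM_{g_{1}+g_{2},n_{1}+n_{2}}$ at the clutched curve $X$ is computed by $\text{Ext}^{1}(\Omega_{X}(\sum\sigma_{j}),\OO_{X})$, which via the local-to-global spectral sequence splits, up to boundary terms, into a subspace of node-preserving deformations and a one-dimensional summand corresponding to smoothing the clutched node. The source tangent space parametrizes first-order deformations of $(X_{1},X_{2})$ with their labeled sections, and the differential of $\beta$ identifies this space with the node-preserving subspace; the transverse direction is canonically $\sigma_{\ast}(\psi^{(1)}\otimes\psi^{(2)})$, as follows from the gluing exact sequence (\ref{equation:1}) together with the explicit identification (\ref{equation:2}). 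Injectivity of this tangent map yields unramifiedness.

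Finally, when $g_{1}\neq g_{2}$ or $n_{1}+n_{2}\neq 0$, the closed immersion statement reduces, given finiteness and unramifiedness on a Deligne--Mumford stack, to checking geometric injectivity. The only potential collision $\beta(X_{1},X_{2})\cong\beta(X_{2},X_{1})$ would require the two factors to be indistinguishable as labeled pointed curves; under the given hypothesis the components of any clutched curve are told apart either by their genera or by which indexed sections they carry, so no such swap can occur. The main obstacle is the deformation-theoretic verification that the image of the differential of $\beta$ is exactly the node-preserving subspace. This rests on the local analytic picture $xy=0$ at the clutched node, on the precise content of the sequences (\ref{equation:1}) and (\ref{equation:2}), and on matching them with the Kodaira--Spencer description of the boundary stratum in $\SCM_{g_{1}+g_{2},n_{1}+n_{2}}$.
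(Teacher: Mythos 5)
The paper offers no argument of its own here: its entire proof reads ``This is [Th.\ 3.7 and Cor.\ 3.9]\cite{Knudsen},'' so the theorem is imported wholesale from Knudsen's original article. Your sketch is therefore not in conflict with anything in the paper, and in broad outline it reconstructs the standard deformation-theoretic strategy that Knudsen himself follows: representability from faithfulness on automorphism groups, finiteness as properness plus quasi-finiteness, unramifiedness from an analysis of the tangent space at the clutched node using the local-to-global spectral sequence for $\text{Ext}^1(\Omega_X(\sum\sigma_j),\OO_X)$ and the sequences (\ref{equation:1})--(\ref{equation:2}), and closed immersion from the previous properties plus injectivity on points.

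Two places in your sketch deserve tightening if it were to be completed. First, in the closed-immersion step you reduce to geometric injectivity, but what one actually needs is that $\beta$ is a monomorphism; the correct chain is that representability gives injectivity on automorphism groups, unramifiedness plus universal injectivity on geometric points gives that the base-changed morphism of schemes is a monomorphism, and a finite monomorphism of schemes is a closed immersion. Second, your geometric-injectivity argument only rules out the swap collision $\beta(X_1,X_2)\cong\beta(X_2,X_1)$; when $X_1$ or $X_2$ is itself singular the clutched curve $X$ has several nodes, and one must also argue that no other node of $X$ yields a disconnection with the same partition of labelled sections and genera. This follows from stability (any ``extra'' factorization would force an unstable genus-zero tail), but it is an additional check, not automatic. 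Neither point is a wrong turn; both are the kind of detail Knudsen's original text handles and your sketch elides.
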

\begin{proof}
This is \cite[Th. 3.7 and Cor. 3.9]{Knudsen}.
\end{proof}
The following statement describes the behavior of the tautological line bundles under pull-back by the clutching morphism.
\begin{proposition}\label{proposition:clutching}
Let $g=g_{1}+g_{2}$, $n=n_{1}+n_{2}$. There are isomorphisms of line bundles, uniquely determined up to a sign,
\begin{align}
		&\beta^{\ast}\lambda_{g,n}\overset{\sim}{\rightarrow}\lambda_{g_{1},n_{1}+1}\boxtimes\lambda_{g_{2},n_{2}+1},
		\label{equation:3}\\
		&\beta^{\ast}\delta_{g,n}\overset{\sim}{\rightarrow}(\delta_{g_{1},n_{1}+1}\otimes\psi^{(n_{1}+1)\,-1}_{g_{1},n_{1}+1})
		\boxtimes(\delta_{g_{2},n_{2}+1}\otimes\psi^{(n_{2}+1)\,-1}_{g_{2},n_{2}+1}),\label{equation:4}\\
		&\beta^{\ast}\psi_{g,n}\overset{\sim}{\rightarrow}(\psi_{g_{1},n_{1}+1}\otimes\psi_{g_{1},n_{1}+1}^{(n_{1}+1)\,-1})
		\boxtimes(\psi_{g_{2},n_{2}+1}\otimes\psi_{g_{2},n_{2}+1}^{(n_{2}+1)\,-1}),\label{equation:5}\\
		&\beta^{\ast}\kappa_{g,n}\overset{\sim}{\rightarrow}\kappa_{g_{1},n_{1}+1}\boxtimes\kappa_{g_{2},n_{2}+1}.\label{equation:6}
\end{align}
\end{proposition}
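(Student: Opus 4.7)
The plan is to reduce each of the four identities to a computation on $\beta^{\ast}\SCC_{g,n}$. By Construction \ref{construction:clutching}, this universal family is obtained from $\SCC_{g_{1},n_{1}+1}\sqcup\SCC_{g_{2},n_{2}+1}$ by the clutching procedure of Theorem \ref{theorem:clutching} --inserting two auxiliary $\PP^{1}$-components and identifying pairs of sections into nodes-- and then applying Knudsen's contraction to the superfluous sections $0_{1}$, $0_{2}$. Since the contraction of \cite[Prop. 2.1]{Knudsen} induces isomorphisms on $R\pi_{\ast}\omega_{\XX/S}$ and on all the Deligne pairings involved, and since the auxiliary $\PP^{1}$-factors contribute trivially ($\SCM_{0,3}$ is a point, so $\lambda_{0,3}$, $\delta_{0,3}$, $\psi_{0,3}$ and $\kappa_{0,3}$ are canonically trivial), I can carry out the computations on the intermediate clutched curve $X=X_{1}'\cup_{q}X_{2}'$ and transport the identifications through the contraction.

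For (\ref{equation:5}), the universal property of $\beta$ identifies the $n$ sections of $\beta^{\ast}\SCC_{g,n}$ with $\sigma^{(1)}_{1},\ldots,\sigma^{(1)}_{n_{1}},\sigma^{(2)}_{1},\ldots,\sigma^{(2)}_{n_{2}}$ pulled back from the two factors, and the cotangent line at each section is preserved; the statement follows at once. For (\ref{equation:4}), I would decompose $\pd\SM_{g,n}$ into its irreducible boundary components. The image of $\beta$ is contained in exactly one of them, say $D$, and $\beta$ factors through a local embedding into $D$; the remaining components pull back to $\pd\SM_{g_{1},n_{1}+1}\boxtimes\OO$ and $\OO\boxtimes\pd\SM_{g_{2},n_{2}+1}$, which together give $\delta_{g_{1},n_{1}+1}\boxtimes\delta_{g_{2},n_{2}+1}$. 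The normal bundle of $\beta$ along $D$ is determined by the versal deformation of the glued node $xy=t$: a local generator of $t$ is the product of tangent generators at the two branches, so $\beta^{\ast}\OO(D)\simeq(\psi^{(n_{1}+1)}_{g_{1},n_{1}+1})^{-1}\boxtimes(\psi^{(n_{2}+1)}_{g_{2},n_{2}+1})^{-1}$; combining the two contributions yields (\ref{equation:4}).

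For (\ref{equation:6}), the Deligne pairing is bimultiplicative and compatible with base change. By Theorem \ref{theorem:clutching}(v), the restriction of $\omega_{X/S}$ to each component $X_{i}'$ of the normalization equals $\omega_{X_{i}'/S}(\sigma^{(i)}_{n_{i}+1})$; twisting by the remaining external sections produces $\omega_{X_{i}'/S}(\sigma^{(i)}_{1}+\ldots+\sigma^{(i)}_{n_{i}+1})$, which is exactly the line bundle whose self-pairing defines $\kappa_{g_{i},n_{i}+1}$. The standard formulas for Deligne pairings on nodal curves produce a correction term at $q$, but this term is trivialized by the residue of $\omega$ at the node (which is canonically $\pm 1$), giving (\ref{equation:6}). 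For (\ref{equation:3}), the normalization exact sequence
\begin{equation*}
	0\rightarrow\omega_{X/S}\rightarrow\nu_{\ast}(\omega_{X_{1}'/S}(\sigma^{(1)}_{n_{1}+1})\oplus\omega_{X_{2}'/S}(\sigma^{(2)}_{n_{2}+1}))\rightarrow k(q)\rightarrow 0
\end{equation*}
combined with $R\pi_{\ast}$ and $\det$ reduces the claim to computing $\det R\pi_{i\ast}\omega_{X_{i}'/S}(\sigma^{(i)}_{n_{i}+1})$. Applying the sequence $0\to\omega\to\omega(\sigma)\to\omega(\sigma)|_{\sigma}\to 0$ and using the residue isomorphism $\omega(\sigma)|_{\sigma}\simeq\OO$ makes the skyscraper and $\psi$-contributions cancel, yielding $\det R\pi_{i\ast}\omega_{X_{i}'/S}(\sigma^{(i)}_{n_{i}+1})\simeq\lambda_{g_{i},n_{i}+1}$ and hence (\ref{equation:3}).

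The main obstacle is the careful bookkeeping of residues and of the antisymmetric identifications at the clutching node, especially in (\ref{equation:3}) and (\ref{equation:6}). Swapping the two branches of the node introduces signs -- the very signs already visible in the isomorphism (\ref{equation:2}) of Theorem \ref{theorem:clutching} -- which is precisely the source of the \emph{up to a sign} clause in the statement. Checking compatibility between the contraction step and each tautological bundle requires comparing the canonical trivializations on the auxiliary $\PP^{1}$-components before and after contraction, which is the most delicate part of the argument but is controlled because $\SCM_{0,3}$ is a point.
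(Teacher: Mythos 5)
Your outline for (\ref{equation:3})--(\ref{equation:5}) is sound and in fact more explicit than the paper, which simply cites \cite[Th. 4.3]{Knudsen} for those three and reserves all of its argument for (\ref{equation:6}). The normalization sequence for $\omega_{X/S}$ together with the residue sequence $0\to\omega\to\omega(\sigma)\to\sigma_{\ast}\OO_{S}\to 0$ does give (\ref{equation:3}), and the cotangent/normal-bundle reading of the boundary divisor gives (\ref{equation:4})--(\ref{equation:5}). The real gap is in (\ref{equation:6}), which is precisely where the paper concentrates its effort. There is no ready-made ``standard formula for Deligne pairings on nodal curves'' to invoke for a stack over $\Spec\Int$; the paper instead \emph{constructs} the isomorphism $\Phi$: after \'etale-localizing on the base $S$, choose rational sections $s,t$ of $\omega_{X'/S}(\sum\sigma_{j}^{(1)}+\sum\sigma_{j}^{(2)})$ whose divisors are finite and flat over $S$, with mutually disjoint components disjoint from the node, and set $\Phi\langle s,t\rangle=\langle s|_{X_{1}},t|_{X_{1}}\rangle\otimes\langle s|_{X_{2}},t|_{X_{2}}\rangle$. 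Showing this is well defined and injective uses the integrality of $S$; showing it is surjective requires Nakayama reduction to a geometric point and then an explicit construction of sections of $\omega_{X'/S}(\ldots)$ from sections on $X_{1}$, $X_{2}$ matched by residues at the node, up to the sign you correctly anticipate. Your phrase ``this term is trivialized by the residue of $\omega$ at the node'' asserts the crux rather than proving it, and since the statement is over $\Spec\Int$ the Nakayama/base-change steps are not optional.

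A secondary issue: your route goes through the intermediate curve with two $\PP^{1}$-bridges and rests on the assertion that Knudsen's contraction ``induces isomorphisms \ldots on all the Deligne pairings involved.'' This is plausible but needs an argument; the paper simply bypasses it by reducing, ``by the definition of the clutching morphism,'' to the curve obtained from $X_{1}\sqcup X_{2}$ by identifying $\sigma_{n_{1}+1}^{(1)}$ with $\sigma_{n_{2}+1}^{(2)}$ into a single node, with no $\PP^{1}$-components in sight. You should either justify the compatibility of contraction with $R\pi_{\ast}\omega$ and the Deligne pairing, or adopt the paper's shortcut and work directly on the two-component clutched curve.
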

\begin{proof}
Once the existence of (\ref{equation:3})--(\ref{equation:6}) is proven, the uniqueness assertion already follows from Corollary \ref{corollary:irreducible_fibers}. 

For the isomorphisms (\ref{equation:3})--(\ref{equation:5}) we refer to \cite[Th. 4.3]{Knudsen} (they are easily constructed by means of Theorem \ref{theorem:clutching} above). We now focus on (\ref{equation:6}). The formation of the relative dualizing sheaf is compatible with base change \cite[Sec. 1]{Knudsen}, as well as for the Deligne pairing \cite[Par. I.3, p. 202]{Elkik}. By the definition of the clutching morphism (Construction \ref{construction:clutching}), we first reduce to the following situation. Let $S$ be a noetherian integral scheme and $(\pi_{i}:X_{i}\rightarrow S;\sigma_{1}^{(i)},\ldots,\sigma_{n_{i}+1}^{(i)})$, $i=1,2$, two pointed stable curves of genus $g_{i}$, respectively. We apply Theorem \ref{theorem:clutching} to the pre-stable curve $X=X_{1}\sqcup X_{2}\rightarrow S$ with sections $\sigma_{n_{1}+1}^{(1)}$, $\sigma_{n_{2}+1}^{(2)}$. With the notations of the theorem, we have to construct a natural isomorphism
\begin{align}
	&\left\langle \omega_{X^{\prime}/S}(\sum_{j=1}^{n_{1}}\sigma_{j}^{(1)}+\sum_{j=1}^{n_{2}}\sigma_{j}^{(2)}), \omega_{X^{\prime}/S}(\sum_{j=1}^{n_{1}}\sigma_{j}^{(1)}+\sum_{j=1}^{n_{2}}\sigma_{j}^{(2)})\right\rangle
	\overset{\sim}{\longrightarrow}\label{equation:7}\\
	&\hspace{4cm}\left\langle\omega_{X_{1}/S}(\sum_{j=1}^{n_{1}+1}\sigma_{j}^{(1)}),\omega_{X_{1}/S}(\sum_{j=1}^{n_{1}+1}\sigma_{j}^{(1)})\right\rangle\label{equation:8}\\
	&\hspace{4cm}\otimes\left\langle\omega_{X_{2}/S}(\sum_{j=1}^{n_{2}+1}\sigma_{j}^{(2)}),\omega_{X_{2}/S}(\sum_{j=1}^{n_{2}+1}\sigma_{j}^{(2)})\right\rangle.\label{equation:9}
\end{align}
We denote by $L$ the line bundle of (\ref{equation:7}) and $L_{1}$, $L_{2}$ the line bundles of (\ref{equation:8}), (\ref{equation:9}), respectively. After localizing for the \'etale topology on $S$, we can find rational sections $s,t$ of $\omega_{X^{\prime}/S}(\sum_{j=1}^{n_{1}}\sigma_{j}^{(1)}+\sum_{j=1}^{n_{2}}\sigma_{j}^{(2)})$ whose divisors are finite and flat over $S$, with mutually disjoint components, disjoint with the image of the section $\sigma$ (image of $\sigma_{n_{1}+1}^{(1)}$ and $\sigma_{n_{2}+1}^{(2)}$ in $X^{\prime}$). Denote by $s\mid_{X_{1}}$, $t\mid_{X_{1}}$ (resp. $s\mid_{X_{2}}$, $t\mid_{X_{2}}$) the pull-backs of $s$ and $t$ to $X_{1}$ (resp. $X_{2}$), respectively. By the properties of the relative dualizing sheaf, $s\mid_{X_{1}}$, $t\mid_{X_{1}}$ (resp. $s\mid_{X_{2}}$, $t\mid_{X_{2}}$) are rational sections of $\omega_{X_{1}/S}(\sum_{j=1}^{n_{1}+1}\sigma_{j}^{(1)})$ (resp. $\omega_{X_{2}/S}(\sum_{j=1}^{n_{2}+1}\sigma_{j}^{(2)})$). They have finite and flat divisors over $S$, with mutually disjoint components and disjoint from $\sigma_{n_{1}+1}^{(1)}$ (resp. $\sigma_{n_{2}+1}^{(2)}$). The symbols $\langle s,t\rangle$, $\langle s\mid_{X_{1}},t\mid_{X_{1}}\rangle$ and $\langle s\mid_{X_{2}},t\mid_{X_2}\rangle$ are non-zero sections of $L$, $L_1$ and $L_2$, respectively. We define the assignment
\begin{displaymath}
	\Phi:\langle s,t\rangle\longmapsto\langle s\mid_{X_{1}},t\mid_{X_{1}}\rangle\otimes\langle s\mid_{X_2},t\mid_{X_2}\rangle.
\end{displaymath}
The symbols of the form $\langle s,t\rangle$ generate $L$. Clearly $\Phi$ defines a morphism $L\rightarrow L_{1}\otimes L_{2}$, compatible with base change by noetherian integral schemes. Observe that $\Phi$ is injective. Indeed, with the assumptions and notations above, $\langle s\mid_{X_{1}},t\mid_{X_{1}}\rangle\otimes\langle s\mid_{X_{2}}, t\mid_{X_{2}}\rangle$ is non-zero by integrality of $S$. We now prove that $\Phi$ is an isomorphism. By Nakayama's lemma, we reduce to $S=\Spec k$, where $k$ is an algebraically closed field. In this case, by definition of the relative dualizing sheaf, we have
\begin{displaymath}
	\omega_{X^{\prime}/S}(\sum_{j=1}^{n_{1}}\sigma_{j}^{(1)}+\sum_{j=1}^{n_{2}}\sigma_{j}^{(2)})\mid_{X_{i}}=\omega_{X_{i}/S}(\sum_{j=1}^{n_{i}+1}\sigma_{j}^{(i)}),\,\,\,i=1,2.
\end{displaymath}
For $i=1,2$, let $s_{i},t_{i}$ be rational sections of $\omega_{X_{i}/S}(\sum_{j=1}^{n_{i}+1}\sigma_{j}^{(i)})$, whose divisors have mutually disjoint components, disjoint from $\sigma_{n_{i}+1}^{(i)}$. Consider $s_{i}, t_{i}$ as sections of $\omega_{X_{i}/S}$. We can define $a(s_{i})=\Res_{\sigma_{n_{i}+1}^{(i)}} s_{i}\in k^{\times}$, $b(t_{i})=\Res_{\sigma_{n_{i}+1}^{(i)}}t_{i}\in k^{\times}$, $i=1,2$. We introduce the sections $s$, $t$ of $\omega_{X^{\prime}/S}(\sum_{j=1}^{n_{1}}\sigma_{j}^{(1)}+\sum_{j=1}^{n_{2}}\sigma_{j}^{(2)})$ characterized by
\begin{displaymath}
	s=\begin{cases}
		a(s_{2})s_{1} &\text{on }\,\,\, X_{1},\\
		-a(s_{1})s_{2} &\text{on }\,\,\, X_{2},
	\end{cases}
\end{displaymath}
and
\begin{displaymath}
	t=\begin{cases}
		b(t_{2})t_{1} &\text{on }\,\,\, X_{1},\\
		-b(t_{1})t_{2} &\text{on }\,\,\, X_{2}.
	\end{cases}
\end{displaymath}
The divisors of $s$ and $t$ have mutually disjoint components, disjoint from $\sigma$. We compute
\begin{displaymath}
	\Phi\langle s,t\rangle=a(s_{1})a(s_{2})b(t_{1})b(t_{2})\langle s_{1},t_{1}\rangle\otimes\langle s_{2},t_{2}\rangle.
\end{displaymath}
Since $\langle s_{1},t_{1}\rangle\otimes\langle s_{2},t_{2}\rangle$ is a frame of $L_{1}\otimes L_{2}$ and $a(s_{1})a(s_{2})b(t_{1})b(t_{2})\in k^{\times}$, we conclude that $\Phi$ is surjective. 

Notice that the construction of $\Phi$ naturally extends to a base $S$ equal to an arbitrary disjoint union of noetherian integral schemes, in particular to objects of the \'etale site of $\SCM_{g_{1},n_{1}+1}\times\SCM_{g_{2},n_{2}+1}$. Applying the functoriality of the Deligne pairing and the relative dualizing sheaf, it is easily checked that $\Phi$ is compatible with base change on this site. Therefore it descends to the required isomorphism. The proof of the proposition is complete.
\end{proof}
\begin{corollary}\label{corollary:clutching}
Let $\gamma:\SCM_{g,n}\times\SCM_{1,1}^{\times n}\rightarrow\SCM_{g+n,0}$ be obtained by reiterated applications of clutching morphisms. Then we have isomorphisms, uniquely determined up to a sign,
\begin{align}
	&\gamma^{\ast}\lambda_{g+n,0}\overset{\sim}{\rightarrow}\lambda_{g,n}\boxtimes\lambda_{1,1}
	^{\boxtimes n},\label{equation:3bis}\\
	&\gamma^{\ast}\delta_{g+n,0}\overset{\sim}{\rightarrow}(\delta_{g,n}\otimes\psi_{g,n}^{-1})\boxtimes
	(\delta_{1,1}\otimes\psi_{1,1}^{-1})^{\boxtimes n},\label{equation:4bis}\\
	&\gamma^{\ast}\kappa_{g+n,0}
	\overset{\sim}{\rightarrow}\kappa_{g,n}\boxtimes\kappa_{1,1}^{\boxtimes n}.\label{equation:6bis}
\end{align}
\end{corollary}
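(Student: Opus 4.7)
The plan is to prove the corollary by induction on $n$, iterating Proposition \ref{proposition:clutching}. For $n=0$ the map $\gamma$ is the identity on $\SCM_{g,0}$ and each isomorphism is tautological. For the inductive step, I would factor $\gamma$ as $\gamma = \gamma_{0}\circ(\beta\times\mathrm{id}^{n-1})$, where $\beta:\SCM_{g,n}\times\SCM_{1,1}\to\SCM_{g+1,n-1}$ clutches the $n$-th section of the genus-$g$ curve with the unique section of the elliptic tail, and $\gamma_{0}:\SCM_{g+1,n-1}\times\SCM_{1,1}^{\times(n-1)}\to\SCM_{g+n,0}$ is the iterated clutching morphism in parameters $(g+1,n-1)$, to which the inductive hypothesis applies. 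Pulling the resulting isomorphisms back by $\beta\times\mathrm{id}^{n-1}$ and applying Proposition \ref{proposition:clutching} to the single clutching $\beta$ should produce the claimed formulas.

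For $\lambda$ and $\kappa$ the calculation is immediate, since (\ref{equation:3}) and (\ref{equation:6}) have no psi-correction: the inductive hypothesis gives $\gamma_{0}^{\ast}\lambda_{g+n,0}\cong\lambda_{g+1,n-1}\boxtimes\lambda_{1,1}^{\boxtimes(n-1)}$, and a further application of (\ref{equation:3}) to $\beta$ replaces $\lambda_{g+1,n-1}$ by $\lambda_{g,n}\boxtimes\lambda_{1,1}$, yielding (\ref{equation:3bis}); the same argument works for $\kappa$ via (\ref{equation:6}).

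The delicate case is $\delta$, where I need to combine (\ref{equation:4}) and (\ref{equation:5}) and watch a cancellation. The inductive hypothesis reads
\[
\gamma_{0}^{\ast}\delta_{g+n,0}\cong(\delta_{g+1,n-1}\otimes\psi_{g+1,n-1}^{-1})\boxtimes(\delta_{1,1}\otimes\psi_{1,1}^{-1})^{\boxtimes(n-1)}.
\]
By (\ref{equation:4}) applied to $\beta$, we have $\beta^{\ast}\delta_{g+1,n-1}\cong(\delta_{g,n}\otimes\psi_{g,n}^{(n)\,-1})\boxtimes(\delta_{1,1}\otimes\psi_{1,1}^{-1})$, and by (\ref{equation:5}) we have $\beta^{\ast}\psi_{g+1,n-1}\cong(\psi_{g,n}\otimes\psi_{g,n}^{(n)\,-1})\boxtimes\OO$ (since $\psi_{1,1}$ has a single factor that cancels itself). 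Multiplying these, the spurious factor $\psi_{g,n}^{(n)}$ cancels and one finds $\beta^{\ast}(\delta_{g+1,n-1}\otimes\psi_{g+1,n-1}^{-1})\cong(\delta_{g,n}\otimes\psi_{g,n}^{-1})\boxtimes(\delta_{1,1}\otimes\psi_{1,1}^{-1})$. Reinserting the remaining $n-1$ elliptic factors gives (\ref{equation:4bis}).

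The main obstacle is precisely this bookkeeping of the individual $\psi^{(n)}_{g,n}$ against the total $\psi_{g+1,n-1}$ pulled back through the inductive clutchings: one must verify that at each step the psi at the just-clutched section cancels between the $\delta$-formula and the $\psi$-formula, so that after $n$ iterations only the total $\psi_{g,n}$ survives. Once this is checked, the uniqueness of all three isomorphisms up to sign follows from Corollary \ref{corollary:irreducible_fibers}, which forces the global automorphism group of every line bundle on a product of $\SCM_{g_{i},n_{i}}$ to be $\{\pm 1\}$.
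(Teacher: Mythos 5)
Your proof is correct and takes the same approach as the paper, which simply states that the corollary is ``a straightforward application of Proposition \ref{proposition:clutching}'' without filling in the induction. Your factorization $\gamma = \gamma_{0}\circ(\beta\times\mathrm{id}^{n-1})$, the step-by-step cancellation of $\psi_{g,n}^{(n)}$ between (\ref{equation:4}) and (\ref{equation:5}), the observation that the $\psi_{1,1}\otimes\psi_{1,1}^{(1)\,-1}$ factor is trivial, and the appeal to Corollary \ref{corollary:irreducible_fibers} for uniqueness up to sign are exactly the details the paper leaves to the reader.
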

\begin{proof}
This is a straightforward application of Proposition \ref{proposition:clutching}.
\end{proof}
\begin{remark}
The isomorphisms (\ref{equation:3bis})--(\ref{equation:6bis}) are described, locally for the \'etale topology, by means of Theorem \ref{theorem:clutching} and the proof of Proposition \ref{proposition:clutching} (see also Knudsen \cite[Part III, Sec. 4]{Knudsen}).
\end{remark}
\subsection{The Mumford isomorphism on $\SCM_{g,n}$}
The next theorem generalizes to $\SCM_{g,n}$ the so called Mumford isomorphism on $\SCM_{g,0}$. The statement is known for $\SCM_{g,n}$ over a field \cite[Eq. 3.15, p. 109]{ArbarelloCornalba}. Our approach is well suited for the analytic part of the proof of the main theorem. The idea is based on two points: a) pull-back the Mumford isomorphism on $\SCM_{g+n,0}$ by the clutching morphism of Corollary \ref{corollary:clutching}; b) deduce the Mumford isomorphism on $\SCM_{g,n}$ from point a) and the Mumford isomorphism on $\SCM_{1,1}$.
\begin{theorem}\label{theorem:Mumford_isomorphism}
There is an isomorphism of line bundles on $\SCM_{g,n}/\Int$, unique\-ly determined up to a sign,
\begin{displaymath}
	\DD_{g,n}:\lambda_{g,n}^{\otimes 12}\otimes\delta_{g,n}^{-1}\otimes\psi_{g,n}\overset{\sim}{\longrightarrow}\kappa_{g,n}.
\end{displaymath}
\end{theorem}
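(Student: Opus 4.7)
The plan is to execute the two-point strategy sketched just before the statement. I take as inputs two Mumford-type isomorphisms over $\Spec\Int$: the classical one on $\SCM_{g+n,0}$, valid because the hypothesis $2g-2+n>0$ forces $g+n\geq 2$,
\[
\lambda_{g+n,0}^{\otimes 12}\otimes \delta_{g+n,0}^{-1}\overset{\sim}{\longrightarrow}\kappa_{g+n,0},
\]
together with its analogue on $\SCM_{1,1}$,
\[
\DD_{1,1}\colon\lambda_{1,1}^{\otimes 12}\otimes \delta_{1,1}^{-1}\otimes \psi_{1,1}\overset{\sim}{\longrightarrow}\kappa_{1,1},
\]
available from the classical theory of the modular discriminant.

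The core step is to pull back the first isomorphism along the clutching morphism $\gamma\colon\SCM_{g,n}\times\SCM_{1,1}^{\times n}\to\SCM_{g+n,0}$ of Corollary \ref{corollary:clutching} and substitute the formulas (\ref{equation:3bis})--(\ref{equation:6bis}). The delicate point is that the $n$ copies of $\psi_{1,1}^{-1}$ arising from the pullback of $\delta_{g+n,0}^{-1}$ (through (\ref{equation:4bis})) must be absorbed symmetrically on both sides, together with the single $\psi_{g,n}$ they also produce on the $\SCM_{g,n}$ side. After this bookkeeping, the pulled-back Mumford isomorphism on $\SCM_{g+n,0}$ reads
\[
(\lambda_{g,n}^{\otimes 12}\otimes \delta_{g,n}^{-1}\otimes \psi_{g,n})\boxtimes (\lambda_{1,1}^{\otimes 12}\otimes \delta_{1,1}^{-1}\otimes \psi_{1,1})^{\boxtimes n}\overset{\sim}{\longrightarrow} \kappa_{g,n}\boxtimes \kappa_{1,1}^{\boxtimes n}
\]
on the product. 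Applying $\DD_{1,1}^{\boxtimes n}$ in the $\SCM_{1,1}^{\times n}$ factors cancels the second box factor on each side and yields an isomorphism $p_{1}^{\ast}L\overset{\sim}{\longrightarrow}\OO_{\SCM_{g,n}\times\SCM_{1,1}^{\times n}}$, where $p_{1}$ denotes the first projection and $L=\lambda_{g,n}^{\otimes 12}\otimes \delta_{g,n}^{-1}\otimes \psi_{g,n}\otimes \kappa_{g,n}^{-1}$.

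It remains to descend this trivialization along $p_1$. By Theorem \ref{theorem:irreducible_fibers}, $\SCM_{1,1}^{\times n}$ is proper over $\Spec\Int$ with geometrically irreducible fibers; the Stein-factorization argument that also underlies Corollary \ref{corollary:irreducible_fibers} gives $(p_{1})_{\ast}\OO=\OO$, so by the projection formula $(p_{1})_{\ast}p_{1}^{\ast}L=L$. A nonvanishing global section of $p_{1}^{\ast}L$ thus pushes down to a section of $L$ on $\SCM_{g,n}$, and this section is nonvanishing at every geometric point of $\SCM_{g,n}$ because its pullback to the corresponding fiber of $p_{1}$ is nowhere zero. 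This produces the required isomorphism $\DD_{g,n}$, while uniqueness up to a sign is immediate from Corollary \ref{corollary:irreducible_fibers}, which forces $\mathrm{Aut}(\OO_{\SCM_{g,n}/\Int})=\Int^{\times}=\{\pm 1\}$. The main obstacle is the careful bookkeeping of $\psi$-twists in the pullback step; once the $n$ auxiliary $\psi_{1,1}$'s on the right match those produced by the pullback of $\delta_{g+n,0}^{-1}$ on the left, the remainder of the argument is essentially formal.
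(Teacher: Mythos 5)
Your proof is correct, and steps 1--2 (pull back the classical Mumford isomorphism on $\SCM_{g+n,0}$ along the clutching morphism $\gamma$, rewrite via (\ref{equation:3bis})--(\ref{equation:6bis}), then cancel the $\SCM_{1,1}^{\times n}$ factor against $pr_2^\ast\DD_{1,1}^{\boxtimes n}$) match the paper verbatim, including the bookkeeping of the $\psi$-twists coming from $\gamma^\ast\delta_{g+n,0}^{-1}$. Where you diverge is the final descent step. The paper does not argue with the projection formula: instead it chooses two auxiliary smooth $1$-pointed stable genus-$1$ curves $X_1/\Spec\Int[1/p_1]$ and $X_2/\Spec\Int[1/p_2]$ for two distinct primes $p_1\neq p_2$, restricts $\DD_{g,n}^\prime$ along the induced sections $pr_2^\ast\varphi_j$ to get trivializations $\tau_j$ of $L\otimes\Int[1/p_j]$, observes via Corollary \ref{corollary:irreducible_fibers} that $\tau_1$ and $\tau_2$ differ by a unit of $\Int[1/p_1p_2]^\times=\pm p_1^{\Int}p_2^{\Int}$, and rescales so the two trivializations glue over $\Spec\Int$. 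This detour is forced by the obvious obstruction that there is no smooth elliptic curve over all of $\Spec\Int$, i.e.\ no global section of $p_1$. Your route avoids that issue entirely: since $\SCM_{1,1}^{\times n}\to\Spec\Int$ is proper, smooth with geometrically irreducible (hence geometrically connected and reduced) fibers by Theorem \ref{theorem:irreducible_fibers}, flat base change yields $(p_1)_\ast\OO=\OO$, and the projection formula then descends the nowhere-vanishing section of $p_1^\ast L$ to a nowhere-vanishing section of $L$. Both arguments are valid and produce the same isomorphism up to sign; yours is shorter and more conceptual at the cost of invoking the relative $(p_1)_\ast\OO=\OO$ (a cohomology-and-base-change fact), while the paper's version stays with the absolute statement of Corollary \ref{corollary:irreducible_fibers} and is more hands-on.

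One small presentational point: you attribute $(p_1)_\ast\OO=\OO$ to ``the Stein-factorization argument that also underlies Corollary \ref{corollary:irreducible_fibers}.'' The paper's corollary is the absolute statement $H^0(\SM\times\Spec\Int[1/N],\Gm)=\Int[1/N]^\times$; your relative version additionally needs flat base change for $(p_1)_\ast$ along $\SCM_{g,n}\to\Spec\Int$. This is routine, but you should say it explicitly rather than fold it into a citation of the corollary.
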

\begin{proof}
For the cases $g\geq 2$, $n=0$ and $g=n=1$, we refer to Moret-Bailly \cite[Th. 2.1]{MB} (which is based on Mumford \cite{Mumford} and Deligne-Rapoport \cite{DR}). For the latter, a comment is in order: \cite{MB} provides an isomorphism
\begin{equation}\label{equation:9_1}
	\lambda_{1,1}^{\otimes 12}\otimes\delta_{1,1}^{-1}\overset{\sim}{\longrightarrow}\langle\omega_{\SCC_{1,1}/\SCM_{1,1}},
	\omega_{\SCC_{1,1}/\SCM_{1,1}}\rangle.
\end{equation}
Since $\pi:\SCC_{1,1}\rightarrow\SCM_{1,1}$ is smooth along the universal section $\sigma_{1}$, we have the adjunction isomorphism
\begin{displaymath}
	\langle\OO(\sigma_{1}),\OO(\sigma_{1})\rangle\overset{\sim}{\longrightarrow}\langle\omega_{\SCC_{1,1}/\SCM_{1,1}},\OO(\sigma_{1})\rangle^{-1},
\end{displaymath}
and hence an isomorphism
\begin{equation}\label{equation:9_2}
	\langle\omega_{\SCC_{1,1}/\SCM_{1,1}},\omega_{\SCC_{1,1}/\SCM_{1,1}}\rangle\overset{\sim}{\longrightarrow}
	\kappa_{1,1}\otimes\psi_{1,1}^{-1}.
\end{equation}
The isomorphism $\DD_{1,1}$ is then constructed with (\ref{equation:9_1})--(\ref{equation:9_2}). For the general case, we first claim that there is an isomorphism,
\begin{equation}\label{equation:10}
	\DD_{g,n}^{\prime}:pr_{1}^{\ast}(\lambda_{g,n}^{\otimes 12}\otimes\delta_{g,n}^{-1}\otimes\psi_{g,n})
	\overset{\sim}{\longrightarrow}pr_{1}^{\ast}\kappa_{g,n}
\end{equation}
($pr_{1}$ is the projection onto the first factor).
Indeed, consider the clutching morphism $\gamma:\SCM_{g,n}\times\SCM_{1,1}^{\times n}\rightarrow\SCM_{g+n,0}$. From Corollary \ref{corollary:clutching} we deduce
\begin{equation}\label{equation:11}
	\begin{split}
	&\gamma^{\ast}\DD_{g+n,0}:(\lambda_{g,n}^{\otimes 12}\otimes\delta_{g,n}^{-1}\otimes\psi_{g,n})\boxtimes
	(\lambda_{1,1}^{\otimes 12}\otimes\delta_{1,1}^{-1}\otimes\psi_{1,1})^{\boxtimes n}\overset{\sim}{\longrightarrow}\\
	&\hspace{9cm}\kappa_{g,n}\boxtimes\kappa_{1,1}^{\boxtimes n}.
	\end{split}
\end{equation}
The claim follows tensoring (\ref{equation:11}) by $pr_{2}^{\ast}(\DD_{1,1}^{\boxtimes n})^{\otimes -1}$.

Let $p_{1}\neq p_{2}$ be prime numbers and $X_{1}\rightarrow\Spec\Int[1/p_{1}]$, $X_{2}\rightarrow\Spec\Int[1/p_{2}]$ two smooth $1$-pointed stable curves of genus $1$.\footnote{For instance, for $p=11, 17$ the modular curve $X_{0}(p)$ can be given the structure of an elliptic curve over $\QQ$ with good reduction over $\Spec\Int[1/p]$ \cite{DR}.} Associated to the curves $X_{j}$, $j=1,2$, there are morphisms
\begin{displaymath}
	\varphi_{j}:\Spec\Int[1/p_{j}]\longrightarrow\SCM_{1,1}\times\Spec\Int[1/p_{j}]\overset{\Delta}{\longrightarrow}\SCM_{1,1}^{\times n}\times\Spec\Int[1/p_{j}],
\end{displaymath}
where $\Delta$ is the $n$-diagonal map. Pulling $\DD_{g,n}^{\prime}$ back by $pr_{2}^{\ast}\varphi_{j}$, we obtain an isomorphism of line bundles on $\SCM_{g,n}\times\Spec\Int[1/p_{j}]$
\begin{displaymath}
	\DD^{(j)}_{g,n}:\lambda_{g,n}^{\otimes 12}\otimes\delta_{g,n}^{-1}\otimes\psi_{g,n}\otimes\Int[1/p_{j}]
	\overset{\sim}{\longrightarrow}\kappa_{g,n}\otimes\Int[1/p_{j}].
\end{displaymath}
Define the line bundle $L=(\lambda_{g,n}^{\otimes 12}\otimes\delta_{g,n}^{-1}\otimes\psi_{g,n})^{-1}\otimes\kappa_{g,n}$ on $\SCM_{g,n}$. Then $\DD^{(j)}_{g,n}$ induces a trivialization $\tau_{j}$ of $L\otimes\Int[1/p_{j}]$. Over the open subset $\Spec\Int[1/p_{1}p_{2}]$ of $\Spec\Int$, the trivializations $\tau_{1}$, $\tau_{2}$ differ by a unit in $\Int[1/p_{1}p_{2}]^{\times}=\pm p_{1}^{\Int}p_{2}^{\Int}$ (Corollary \ref{corollary:irreducible_fibers}). Write $\tau_{1}=\epsilon p_{1}^{m} p_{2}^{n}\tau_{2}$, over $\Spec\Int[1/p_{1}p_{2}]$, with $\epsilon\in\lbrace -1,1\rbrace$. Then $\tau_{1}^{\prime}:=\epsilon p_{1}^{-m}\tau_{1}$, $\tau_{2}^{\prime}:=p_{2}^{n}\tau_{2}$ are new trivializations of $L\otimes\Spec\Int[1/p_{1}]$, $L\otimes\Spec\Int[1/p_{2}]$, respectively. By construction, they coincide over $\Spec\Int[1/p_{1}]\cap\Spec\Int[1/p_{2}]$. Therefore they glue into a trivialization of $L$ over $\Spec\Int$. This establishes the existence of an isomorphism as in the statement. The uniqueness follows from Corollary \ref{corollary:irreducible_fibers}.
\end{proof}
\begin{corollary}\label{corollary:Mumford_isomorphism}
For the clutching morphism $\gamma:\SCM_{g,n}\times\SCM_{1,1}^{\times n}\rightarrow\SCM_{g+n,0}$, the isomorphisms $\gamma^{\ast}\DD_{g+n,0}$ and $\DD_{g,n}\boxtimes\DD_{1,1}^{\boxtimes n}$ coincide up to a sign.
\end{corollary}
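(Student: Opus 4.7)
The plan is to reduce the statement to the uniqueness clause already established for the Mumford isomorphism, by showing that both $\gamma^{\ast}\DD_{g+n,0}$ and $\DD_{g,n}\boxtimes\DD_{1,1}^{\boxtimes n}$ are isomorphisms between \emph{the same} pair of line bundles on $\SCM_{g,n}\times\SCM_{1,1}^{\times n}$, and then invoking the triviality of global units.

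First I would identify the source and target. Since $\SCM_{g+n,0}$ has no marked sections, the line bundle $\psi_{g+n,0}$ is the empty tensor product, hence canonically $\OO_{\SCM_{g+n,0}}$. Thus $\gamma^{\ast}\DD_{g+n,0}$ is an isomorphism
\begin{displaymath}
\gamma^{\ast}\bigl(\lambda_{g+n,0}^{\otimes 12}\otimes\delta_{g+n,0}^{-1}\bigr)\overset{\sim}{\longrightarrow}\gamma^{\ast}\kappa_{g+n,0}.
\end{displaymath}
Applying the clutching isomorphisms (\ref{equation:3bis}), (\ref{equation:4bis}) and (\ref{equation:6bis}) of Corollary \ref{corollary:clutching}, the left-hand side becomes, after rearranging the box products,
\begin{displaymath}
\bigl(\lambda_{g,n}^{\otimes 12}\otimes\delta_{g,n}^{-1}\otimes\psi_{g,n}\bigr)\boxtimes\bigl(\lambda_{1,1}^{\otimes 12}\otimes\delta_{1,1}^{-1}\otimes\psi_{1,1}\bigr)^{\boxtimes n},
\end{displaymath}
while the right-hand side becomes $\kappa_{g,n}\boxtimes\kappa_{1,1}^{\boxtimes n}$. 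These are exactly the source and target of $\DD_{g,n}\boxtimes\DD_{1,1}^{\boxtimes n}$.

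Next, I would consider the automorphism $\Theta=(\gamma^{\ast}\DD_{g+n,0})\circ(\DD_{g,n}\boxtimes\DD_{1,1}^{\boxtimes n})^{-1}$ of the line bundle $\kappa_{g,n}\boxtimes\kappa_{1,1}^{\boxtimes n}$ on $\SCM_{g,n}\times\SCM_{1,1}^{\times n}$. Such an automorphism is given by multiplication by a global section of $\Gm$ on this stack. By Corollary \ref{corollary:irreducible_fibers} applied with $N=1$ to the product $\SCM_{g,n}\times\SCM_{1,1}^{\times n}$, one has
\begin{displaymath}
H^{0}\bigl(\SCM_{g,n}\times\SCM_{1,1}^{\times n},\Gm\bigr)=\Int^{\times}=\{\pm 1\}.
\end{displaymath}
Hence $\Theta=\pm 1$, which is exactly the asserted equality up to sign.

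There is essentially no obstacle here beyond the bookkeeping of the clutching isomorphisms: the real content lies in Proposition \ref{proposition:clutching} and the existence/uniqueness portion of Theorem \ref{theorem:Mumford_isomorphism}, which together with the triviality of global units on products of moduli stacks already force any two such isomorphisms to agree up to a sign. Care should only be taken to ensure that Corollary \ref{corollary:irreducible_fibers} does apply to the product $\SCM_{g,n}\times\SCM_{1,1}^{\times n}$, which is the case since that corollary is stated precisely for finite products of moduli stacks of pointed stable curves.
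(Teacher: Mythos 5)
Your proof is correct and follows essentially the same route as the paper, which simply cites Theorem \ref{theorem:Mumford_isomorphism}, Corollary \ref{corollary:clutching} and Corollary \ref{corollary:irreducible_fibers} as a "straightforward application." You have spelled out the bookkeeping — $\psi_{g+n,0}$ trivial, the three clutching identifications (\ref{equation:3bis}), (\ref{equation:4bis}), (\ref{equation:6bis}), and the automorphism of the target being a global unit forced to equal $\pm 1$ — which is exactly what the paper's terse proof is gesturing at.
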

\begin{proof}
The corollary is a straightforward application of Theorem \ref{theorem:Mumford_isomorphism}, Corollary \ref{corollary:clutching} and Corollary \ref{corollary:irreducible_fibers}.
\end{proof}
\section{The Liouville metric on $\kappa_{g,n}$}\label{section:analytic1}
\subsection{Local description of $\SCM_{g,n}^{\an}$}\label{subsection:local_description}
Let $(X;p_{1},\ldots,p_{n})$ be a pointed stable curve over $\CC$. Teichm\"uller's theory provides a \textit{small stable deformation} $(\mathfrak{f}:\mathfrak{X}\rightarrow\Omega;\fp_{1},\ldots,\fp_{n})$ of $(X^{\an};p_{1},\ldots,p_{n})$, where $\Omega\subset\CC^{3g-3+n}$ is some open analytic neighborhood of $0$. Let $\mathfrak{F}:\Omega\rightarrow\SCM_{g,n}^{\an}$ be the induced morphism of analytic stacks. After possibly shrinking $\Omega$, the image $\mathfrak{F}(\Omega)$ is an open substack of $\SCM_{g,n}^{\an}$. It is the stack theoretic quotient of $\Omega$ by a finite group. Varying $(X;p_{1},\ldots,p_{n})$ in $\SCM_{g,n}(\CC)$, the open subsets $\Omega$ as above cover $\SCM_{g,n}^{\an}$. This subsection is based on \cite[Sec. 2]{Masur} and \cite[Sec. 2]{Wolpert:hyperbolic}, and reviews the construction of the small stable deformation $(\mathfrak{f}:\mathfrak{X}\rightarrow\Omega; \fp_{1},\ldots, \fp_{n})$.

\begin{construction}\label{construction:deformation}
\textit{i}. Fix $(X; p_{1},\ldots, p_{n})$ a $n$-pointed stable curve of genus $g$ over $\CC$. We shall identify $X$ with $X^{\an}$ by Chow's theorem. Let $q_{1},\ldots,q_{m}$ be the singular points of $X$. Define $X^{\circ}:=X\setminus\lbrace p_{1},\ldots,p_{n},q_{1},\ldots,q_{m}\rbrace$ and $\overline{X}$ a smooth completion of $X^{\circ}$. Then $\overline{X}$ has a pair of punctures $a_{j}$, $b_{j}$ at the place of each cusp $q_{j}$. The surface $X^{\circ}$ has a unique complete riemannian metric of constant curvature $-1$. Let $(W_{i}, w_{i})$ be a \textit{rs} analytic chart at $p_{i}$ and $(U_{j}, u_{j})$, $(V_{j},v_{j})$ \textit{rs} analytic charts at $a_{j}$, $b_{j}$ respectively (see Section \ref{section:conventions}). We suppose that all the $W_{i}$, $U_{j}$, $V_{k}$ are mutually disjoint. Finally, let $U_{0}$ be a relatively compact open subset of $\overline{X}\setminus(\overline{\cup_{i,j}W_{i}\cup U_{j}\cup V_{j}})$.
 
\textit{ii}. The deformation space $\mathfrak{D}$ of $X^{\circ}$ is the product of the Teichm\"uller spaces of the connected components of $X^{\circ}$. We can choose smooth Beltrami differentials $\nu_{1},\ldots,\nu_{r}$ compactly supported in $U_{0}$, spanning the tangent space at $X^{\circ}$ of $\mathfrak{D}$. Let $s\in\CC^{r}$ and define $\nu(s)=\sum_{j}s_{j}\nu_{j}$. Then $\nu(s)$ is a smooth Beltrami differential compactly supported in $U_{0}$. For $|s|$ small, $\|\nu(s)\|_{\infty}<1$. The solution of the Beltrami equation on $X^{\circ}$, for the Beltrami differential $\nu(s)$, produces a new Riemann surface $\fX^{\circ}_{s}$ diffeomorphic to $X^{\circ}$. For this, let $\lbrace(U_{\alpha},z_{\alpha})\rbrace_{\alpha}$ be an analytic atlas of $X^{\circ}$. For every $\alpha$ we take $w_{\alpha}$ a homeomorphism solution of $w_{\overline{z}_{\alpha}}=\nu(s)w_{z_{\alpha}}$. We can normalize $w_{\alpha}$ to depend holomorphically on $s$. Then $\lbrace (U_{\alpha},w_{\alpha}\circ z_{\alpha})\rbrace$ is an atlas for the Riemann surface $\fX^{\circ}_{s}$. Notice that $w_{\alpha}$ is in general not holomorphic in $z_{\alpha}$, but quasi-conformal. However $w_{\alpha}\circ z_{\alpha}$ is holomorphic on $U_{\alpha}\cap (\cup_{i,j}W_{i}\cup U_{j}\cup V_{j})$, since $\nu(s)$ is supported in $U_{0}$. This means that $(U_{\alpha}\cap (\cup_{i,j}W_{i}\cup U_{j}\cup V_{j}), z_{\alpha})$ is an analytic chart of $\fX^{\circ}_{s}$. In particular, $(W_{i}\setminus\lbrace p_{i}\rbrace,w_{i})$, $(U_{j}\setminus\lbrace a_{j}\rbrace,u_{j})$ and $(V_{j}\setminus\lbrace b_{j}\rbrace, v_{j})$ serve as analytic charts on $\fX^{\circ}_{s}$.

\textit{iii}. Let $0<c<1$ be a small real number such that $u_{j}(U_{j})$ and $v_{j}(V_{j})$ contain the open disc $D(0,c)\subset\CC$, for all $j$. Let $t_{j}\in\CC$ with $|t_{j}|<c^{2}$. We remove from $\fX^{\circ}_{s}$ the discs $\lbrace |u_{j}|\leq |t_{j}|/c\rbrace\subset U_{j}$ and $\lbrace |v_{j}|\leq |t_{j}|/c\rbrace\subset V_{j}$. We obtain a Riemann surface $\fX^{\ast}_{s,t}$, $t=(t_{1},\ldots,t_{m})$. Then we form the identification space $\fX^{\circ}_{s,t}=\fX^{\ast}_{s,t}/\sim$, where $p\sim q$ is the equivalence relation generated by
\begin{displaymath}
	\begin{split}
		&p\in\lbrace |t_{j}|/c<|u_{j}|<c\rbrace,\,\,\,q\in\lbrace |t_{j}|/c<|v_{j}|<c\rbrace\,\,\,\text{for some}\,\,\,j,\\
		& p\sim q\,\,\,\text{if, and only if,}\,\,\, u_{j}(p)v_{j}(q)=t_{j}.
	\end{split}
\end{displaymath}
The analytic space $\fX_{s,t}^{\circ}$ is actually a Riemann surface. We can let some of the $t_{j}=0$, with the obvious meaning for $\fX_{s,t}^{\circ}$. Observe that the $(W_{i}\setminus\lbrace p_{i}\rbrace,w_{i})$, $i=1,\ldots,n$, still define analytic charts on $\fX_{s,t}^{\circ}$. Then the charts $(W_{i},w_{i})$ describe a completion $\fX_{s,t}$ of $\fX_{s,t}^{\circ}$, smooth near the points $\fp_{j}(s,t):=(w_{j}=0)$, $j=1,\ldots,m$. The tuple $(\fX_{s,t}; \fp_{1}(s,t),\ldots,\fp_{n}(s,t))$ is a $n$-pointed stable curve of genus $g$.

\textit{iv}. For $(s,t)$ in some small open analytic subset $\Omega\subset\CC^{3g-3+n}$ we have constructed a $n$-pointed stable curve of genus $g$. We put $\fX:=\cup_{(s,t)\in\Omega}\fX_{s,t}$ and $\ff:\fX\rightarrow\Omega$ for the natural projection.
\end{construction}
\begin{proposition}
i. The tuple $(\ff:\fX\rightarrow\Omega;\fp_{1},\ldots,\fp_{n})$ is a $n$-pointed stable curve of genus $g$, whose fiber at $0$ equals $(X;p_{1},\ldots,p_{n})$. Let $\mathfrak{F}:\Omega\rightarrow\SCM_{g,n}^{\an}$ be the induced morphism of analytic stacks. 

ii. After possibly shrinking $\Omega$, $\mathfrak{F}$ is a local manifold cover: the image $\mathfrak{F}(\Omega)$ is an open substack of $\SCM_{g,n}^{\an}$ and is the stack theoretic quotient of $\Omega$ by a finite group acting on $\Omega$.
\end{proposition}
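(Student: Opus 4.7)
The plan is to verify part (i) by inspection of the charts in Construction \ref{construction:deformation}, and to deduce part (ii) from a versality statement at the origin $(s,t)=0$.

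For (i), the total space $\fX$ is covered by three types of charts: the section charts $(W_{i},w_{i})\times\Omega$, whose complex structure is independent of $(s,t)$; the plumbing charts near each singular point $q_{j}$ of $X$, locally isomorphic to
\begin{displaymath}
\{(u_{j},v_{j},s,t)\in\CC^{2}\times\Omega\,:\,|u_{j}|,|v_{j}|\leq c,\,u_{j}v_{j}=t_{j}\};
\end{displaymath}
and a bulk chart $U_{0}\times\Omega$ whose fiberwise complex structure solves the Beltrami equation with datum $\nu(s)$. These charts glue holomorphically, since $\nu(s)$ is supported in $U_{0}$ while the section and plumbing charts are untouched by the quasi-conformal deformation. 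Properness over $\Omega$ is then straightforward; flatness is obvious away from the nodes, and at a node it follows from the standard flatness of $\OO_{\Omega}[u_{j},v_{j}]/(u_{j}v_{j}-t_{j})$ over $\OO_{\Omega}$. Each fiber is a connected nodal curve of arithmetic genus $g$, and setting $(s,t)=(0,0)$ recovers $(X;p_{1},\ldots,p_{n})$ by unfolding the construction. Stability is an open condition, satisfied at $0$ by hypothesis, so it persists after shrinking $\Omega$.

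For (ii), since $\dim\Omega=3g-3+n=\dim\SCM_{g,n}^{\an}$, it suffices to prove that the Kodaira--Spencer map
\begin{displaymath}
d\mathfrak{F}_{0}:T_{0}\Omega\longrightarrow\operatorname{Ext}^{1}(\Omega^{1}_{X}(p_{1}+\ldots+p_{n}),\OO_{X})
\end{displaymath}
is an isomorphism, for then $\mathfrak{F}$ is \'etale at $0$, and by shrinking $\Omega$ it is \'etale with open image. The target fits into the local-to-global exact sequence
\begin{displaymath}
0\to H^{1}(X,T_{X}(-\textstyle\sum_{i}p_{i}))\to\operatorname{Ext}^{1}(\Omega^{1}_{X}(\sum_{i}p_{i}),\OO_{X})\to\bigoplus_{j=1}^{m}T_{q_{j}}\to 0,
\end{displaymath}
whose last term is the sum of the one-dimensional local smoothing spaces at the nodes. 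The plumbing parameter $t_{j}$ maps tautologically to a generator of $T_{q_{j}}$, because $u_{j}v_{j}=t_{j}$ is the miniversal deformation of an ordinary double point. By Bers's theorem, the differentials $\nu_{1},\ldots,\nu_{r}$ of Construction \ref{construction:deformation} span the tangent space at $X^{\circ}$ of the Teichm\"uller space of the normalized punctured surface, and this Teichm\"uller space is canonically identified, via the normalization $\overline{X}\to X$, with $H^{1}(X,T_{X}(-\sum p_{i}))$. A short diagram chase concludes that $d\mathfrak{F}_{0}$ is an isomorphism.

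The image of $\mathfrak{F}$ is then open, and the finite group $G:=\operatorname{Aut}(X;p_{1},\ldots,p_{n})$ (finite by stability of $X$) acts on a neighborhood of $0\in\Omega$ by permuting the three families of charts, acting linearly on the Beltrami span, and permuting the components of $t$. The versality already established and the universal property of the constructed family identify $\mathfrak{F}$ with the composition of $\Omega\to[\Omega/G]$ with an open immersion $[\Omega/G]\hookrightarrow\SCM_{g,n}^{\an}$. The principal difficulty is the tangent-space identification in (ii): correctly matching the image of the $s$-variables with $H^{1}(X,T_{X}(-\sum p_{i}))$ via normalization, which is a standard but delicate piece of Bers--Kodaira--Spencer theory (see \cite[Sec. 2]{Masur} and \cite[Sec. 2]{Wolpert:hyperbolic}). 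Once this is in place, everything else is formal.
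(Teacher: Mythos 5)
The paper's ``proof'' of this proposition is a one-line citation to \cite[Sec. 2]{Wolpert:hyperbolic} (and implicitly \cite[Sec. 2]{Masur}); your argument is correct and is precisely the standard Kodaira--Spencer/Bers--Teichm\"uller argument that the citation delegates to. One minor notational imprecision worth flagging: for nodal $X$ the symbol $T_{X}(-\sum_{i}p_{i})$ should be read as $\mathcal{H}om_{\OO_X}(\Omega^{1}_{X}(\sum_i p_i),\OO_{X})$, whose $H^{1}$ is identified (via the normalization $\overline{X}\to X$) with the tangent space to the Teichm\"uller space of the punctured normalization --- which is exactly how you use it, so the argument goes through unchanged.
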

\begin{proof}
We refer to \cite[Sec. 2]{Wolpert:hyperbolic}.
\end{proof}
\subsection{Definition of the Liouville metric}\label{subsection:definition_liouville}
\begin{construction}\label{construction:liouville}
Let $p\in\SCM_{g,n}(\CC)$ be a point corresponding to a pointed stable curve $(X; p_{1},\ldots, p_{n})$. Let $X_{1},\ldots,X_{m}$ be the decomposition of $X_{\reg}$ into connected components. Every Riemann surface $X_{j}^{\circ}=X_{j}\setminus\lbrace p_{1},\ldots,p_{n}\rbrace$ admits a complete hyperbolic metric of constant curvature $-1$, $ds_{\hyp,j}^{2}$. If $\overline{X}_{j}$ is a compactification of $X_{j}^{\circ}$ and $\pd X_{j}^{\circ}=\overline{X}_{j}\setminus X_{j}^{\circ}$, then $ds_{\hyp,j}^{2}$ induces a pre-log-log hermitian metric $\|\cdot\|_{\hyp,j}$ on $\omega_{\overline{X}_{j}}(\pd X_{j}^{\circ})$. For its first Chern form we write $\c1(\omega_{X_{j}}(\pd X_{j}^{\circ})_{\hyp})$ (well defined and smooth on $X_{j}^{\circ}$). If $\sigma,\tau$ are rational sections of $\omega_{X}(p_{1}+\ldots+p_{n})$, whose divisors have mutually disjoint components, disjoint from $X_{\sing}$, then the integral
\begin{displaymath}
	I_{j}(\sigma,\tau):=\int_{X_{j}^{\circ}}\left[\log\|\sigma\|_{\hyp,j}\delta_{\gdiv t}+\log\|\tau\|_{\hyp,j}\c1(\omega_{\overline{X}_{j}}(\pd X_{j}^{\circ})_{\hyp})\right]
\end{displaymath}
is convergent \cite[Sec. 7]{BKK}. The norm $\|\langle \sigma,\tau\rangle\|$ of $\langle \sigma,\tau\rangle$ is characterized by
\begin{displaymath}
	\log\|\langle \sigma,\tau\rangle\|=\sum_{j=1}^{m}I_{j}(\sigma,\tau).
\end{displaymath}
This construction defines a hermitian metric (at the archimedian places) on the tautological line bundle $\kappa_{g,n}$.
\end{construction}
\begin{definition}[Liouville metric]
The hermitian metric on $\kappa_{g,n}$ defined by Construction \ref{construction:liouville} is called the \textit{Liouville metric}. We write $\overline{\kappa}_{g,n}$ to refer to the line bundle $\kappa_{g,n}$ together with the Liouville metric.
\end{definition}
\begin{lemma}
Let $\|\cdot\|_{\hyp}$ be the hermitian metric on $\omega_{\SCC_{g,n}/\SCM_{g,n}}(\sigma_{1}+\ldots+\sigma_{n})$, induced by the hyperbolic metric on (the regular locus) of the fibers of $\SCC_{g,n}\setminus\cup_{j}\sigma_{j}(\SCC_{g,n})\rightarrow\SCM_{g,n}$. Then we have
\begin{equation}\label{equation:11bis}
	\overline{\kappa}_{g,n}=\langle\omega_{\SCC_{g,n}/\SCM_{g,n}}(\sigma_{1}+\ldots+\sigma_{n})_{\hyp}, \omega_{\SCC_{g,n}/\SCM_{g,n}}(\sigma_{1}+\ldots+\sigma_{n})_{\hyp}\rangle,
\end{equation}
where the right hand side of (\ref{equation:11bis}) is endowed with the Deligne metric \cite[Sec. 6]{Deligne}.
\end{lemma}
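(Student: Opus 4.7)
The plan is to check that Construction \ref{construction:liouville} literally computes the Deligne pairing metric associated to the pre-log-log hermitian line bundle $\omega_{\SCC_{g,n}/\SCM_{g,n}}(\sigma_{1}+\ldots+\sigma_{n})_{\hyp}$. Since the Deligne pairing is locally generated by symbols $\langle\sigma,\tau\rangle$ attached to rational sections with finite, flat, mutually disjoint divisors (further disjoint from the singular locus of the fibers), the comparison reduces to checking that the two prescriptions for $\log\|\langle\sigma,\tau\rangle\|$ agree on such data.

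To this end, I would first recall Deligne's defining formula \cite[Sec. 6]{Deligne}: over a smooth fiber $X$, for a hermitian line bundle $L$ with smooth metric and rational sections $\sigma,\tau$ with disjoint divisors, the Deligne norm of $\langle\sigma,\tau\rangle\in\langle L,L\rangle$ satisfies
\begin{displaymath}
\log\|\langle\sigma,\tau\rangle\|=\int_{X}\log\|\sigma\|\,\delta_{\gdiv\tau}+\int_{X}\log\|\tau\|\,\c1(L).
\end{displaymath}
When $L$ carries only a pre-log-log hermitian metric in the sense of \cite[Sec. 7]{BKK}, the right-hand side is still absolutely convergent thanks to the prescribed logarithmic growth, and provides the natural extension of the Deligne metric to that setting. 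For a smooth fiber of $\pi:\SCC_{g,n}\rightarrow\SCM_{g,n}$ corresponding to $(X;p_{1},\ldots,p_{n})$ this is exactly $I_{1}(\sigma,\tau)$, so there is nothing further to do in this case.

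For a singular fiber $X$ with $X_{\reg}=\bigsqcup_{j}X_{j}$, the key identification is that the restriction of the relative dualizing sheaf $\omega_{X}(p_{1}+\ldots+p_{n})$ to the partial normalization $\overline{X}_{j}$ is canonically $\omega_{\overline{X}_{j}}(\pd X_{j}^{\circ})$, where $\pd X_{j}^{\circ}$ collects the marked points lying on $X_{j}$ together with the preimages of the nodes. Correspondingly, $\|\cdot\|_{\hyp}$ restricts on each $X_{j}^{\circ}$ to the dual of the complete hyperbolic metric of constant curvature $-1$ on that component. Splitting the integral appearing in the Deligne formula along components then yields exactly $\sum_{j}I_{j}(\sigma,\tau)$, and the absence of any boundary contribution at the nodes is precisely what the integrability estimates of \cite[Sec. 7]{BKK} guarantee.

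The one delicate point is to know a priori that this formula defines a bona fide pre-log-log hermitian metric on $\kappa_{g,n}$ over all of $\SCM_{g,n}$, with singularities of Burgos-Kramer-K\"uhn type concentrated along $\pd\SM_{g,n}$. I expect this to be the real technical obstacle, and I would address it by combining two inputs: the plumbing description recalled in Section \ref{subsection:local_description}, which controls the variation of the family of hyperbolic metrics and of the chosen rational sections as the fibers degenerate, and the fact that the intersection-theoretic construction of \cite{BKK} is designed precisely so that Deligne's formula, read in the pre-log-log setting, outputs a pre-log-log hermitian structure on the Deligne pairing. Once these are in place, the lemma reduces to the unfolding of definitions sketched above.
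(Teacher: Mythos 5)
Your proof is correct and takes essentially the same approach as the paper, which dismisses the lemma in one line as ``a reformulation of the definition of the Liouville metric''; your first two paragraphs simply unpack that reformulation by comparing Construction \ref{construction:liouville} component-by-component with Deligne's integral formula, extended verbatim to the pre-log-log setting per \cite[Sec. 7]{BKK}. One small clarification: the concern raised in your last paragraph --- whether the recipe produces a bona fide pre-log-log or even continuous metric on $\kappa_{g,n}$ over $\SCM_{g,n}$ --- is not actually part of this lemma, which only asserts the pointwise equality of the two hermitian structures fiber by fiber; the regularity of the Liouville metric is a separate, genuinely harder question, and it is precisely what Theorem \ref{theorem:liouville} and Corollary \ref{corollary:liouville} are devoted to.
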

\begin{proof}
This is a reformulation of the definition of the Liouville metric.
\end{proof}
\begin{lemma}\label{lemma:clutching_liouville}
Let $g=g_{1}+g_{2}$, $n=n_{1}+n_{2}$ and $\beta:\SCM_{g_{1},n_{1}+1}\times\SCM_{g_{2},n_{2}+1}\rightarrow\SCM_{g,n}$ be Knudsen's clutching morphism. Then the isomorphism (\ref{equation:6}) induces an isometry 
\begin{displaymath}
	\beta^{\ast}\overline{\kappa}_{g,n}\overset{\sim}{\longrightarrow}\overline{\kappa}_{g_{1},n_{1}+1}\boxtimes\overline{\kappa}_{g_{2},n_{2}+1}.
\end{displaymath}
In particular, for the clutching morphism $\gamma:\SCM_{g,n}\times\SCM_{1,1}^{\times n}\rightarrow\SCM_{g+n,0}$ of Corollary \ref{corollary:clutching}, we have the isometry
\begin{displaymath}
	\gamma^{\ast}\overline{\kappa}_{g+n,0}\overset{\sim}{\longrightarrow}\overline{\kappa}_{g,n}\boxtimes\overline{\kappa}_{1,1}^{\boxtimes n}.
\end{displaymath}
\end{lemma}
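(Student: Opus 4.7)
The plan is to check the isometry fiberwise over complex points of $\SCM_{g_1,n_1+1}^{\an}\times\SCM_{g_2,n_2+1}^{\an}$. Since the underlying isomorphism of line bundles is already supplied by (\ref{equation:6}), and since that isomorphism was constructed in the proof of Proposition \ref{proposition:clutching} via the explicit formula $\Phi(\langle s,t\rangle)=\langle s|_{X_1},t|_{X_1}\rangle\otimes\langle s|_{X_2},t|_{X_2}\rangle$ for suitably chosen rational sections $s,t$, only the comparison of norms at each complex point needs justification. The iterated statement for $\gamma$ will then follow by repeated application of the first assertion.

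Fix a complex point $(p_1,p_2)$ corresponding to pointed stable curves $(X_i;\sigma_1^{(i)},\ldots,\sigma_{n_i+1}^{(i)})$, $i=1,2$. Its image $\beta(p_1,p_2)$ corresponds to the nodal stable curve $X$ obtained by identifying $\sigma_{n_1+1}^{(1)}$ with $\sigma_{n_2+1}^{(2)}$ into a node $q$, with the remaining sections $\sigma_j^{(i)}$, $j\leq n_i$, surviving as marked points. The regular locus $X_{\reg}$ splits into two connected components $X_1\setminus\{q\}$ and $X_2\setminus\{q\}$; after removing the remaining marked points, these are the Riemann surfaces $X_i^{\circ}:=X_i\setminus\{\sigma_1^{(i)},\ldots,\sigma_{n_i+1}^{(i)}\}$, with $\sigma_{n_i+1}^{(i)}$ now playing the role of a preimage of the node. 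In particular, the unique complete hyperbolic metric on each $X_i^{\circ}$ used in Construction \ref{construction:liouville} to build the Liouville metric on $\kappa_{g,n}$ at $\beta(p_1,p_2)$ coincides with the one used to build the Liouville metric on $\kappa_{g_i,n_i+1}$ at $p_i$. Moreover, the canonical restriction $\omega_X(\sigma_1+\ldots+\sigma_{n_1+n_2})|_{X_i}\simeq\omega_{X_i}(\sigma_1^{(i)}+\ldots+\sigma_{n_i+1}^{(i)})$, valid since the dualizing sheaf of a nodal curve acquires a simple pole at each preimage of a node, is an isometry for the respective pre-log-log metrics.

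Combining these identifications with the formula for $\Phi$ and the defining integrals in Construction \ref{construction:liouville}, one obtains, for well-positioned rational sections $s,t$,
\begin{displaymath}
\log\|\langle s,t\rangle\|_{\textrm{Liouville},X}=I_{X_1^{\circ}}(s,t)+I_{X_2^{\circ}}(s,t)=\log\|\langle s|_{X_1},t|_{X_1}\rangle\|_{\textrm{Liouville},X_1}+\log\|\langle s|_{X_2},t|_{X_2}\rangle\|_{\textrm{Liouville},X_2},
\end{displaymath}
which is precisely the statement that $\Phi$ is an isometry at $(p_1,p_2)$. The only step calling for real care is the metric identification of the previous paragraph: one must match the hyperbolic metric on the components of $X_{\reg}$ seen inside the nodal curve with the hyperbolic metrics on the individual stable curves $X_i$, and likewise check that the restriction of the pre-log-log structure to each component yields the correct cusp singularity at the preimage of the node. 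Once this bookkeeping is in place, additivity of the integrals $I_j$ over disjoint components immediately delivers multiplicativity of the Liouville norm, so no serious obstacle remains.
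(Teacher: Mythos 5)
Your proof is correct and follows the same approach the paper intends; the paper's own proof is the one-line assertion that ``one easily checks'' that the isomorphism $\Phi$ from (\ref{equation:6}) is compatible with the Liouville metric, and you have supplied exactly the expected verification: the components of $X_{\reg}$ for the nodal curve $\beta(p_1,p_2)$ are precisely $X_1$ and $X_2$ with the node preimage playing the role of an extra puncture, the hyperbolic metric on such a finite-type surface is intrinsic and so agrees on both sides, the restriction $\omega_X(\ldots)\mid_{X_i}\simeq\omega_{X_i}(\ldots)$ respects the pre-log-log structure, and additivity of the integrals $I_j$ of Construction~\ref{construction:liouville} together with the explicit formula for $\Phi$ gives the isometry, after which iteration yields the statement for $\gamma$.
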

\begin{proof}
One easily checks that the isomorphism (\ref{equation:6}), constructed in the proof of Proposition \ref{proposition:clutching}, is compatible with the Liouville metric.
\end{proof}
The main result of this section is the following theorem.
\begin{theorem}\label{theorem:liouville}
The Liouville metric is continuous on $\SCM_{g,0}^{\an}$.
\end{theorem}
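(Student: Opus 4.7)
The assertion is local on $\SCM_{g,0}^{\an}$, so I fix a point $p$ corresponding to a stable curve $X$ with nodes $q_{1},\ldots,q_{m}$ and work inside a small stable deformation $\ff:\fX\rightarrow\Omega$ as in Construction \ref{construction:deformation}, with plumbing parameters $(s,t)\in\Omega\subset\CC^{3g-3}$. At points of the smooth locus $\SM_{g,0}^{\an}$ (the locus $t_{j}\neq 0$ for every $j$) the hyperbolic metric of a compact Riemann surface depends smoothly on the complex structure by classical Ahlfors--Bers theory, so the Liouville metric is actually smooth there. The substantive case is $t=0$, i.e.\ when $p\in\pd\SM_{g,0}^{\an}$, and the issue is to control the behaviour along the pinching collars.

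My first step is to choose, in a neighbourhood of $p$, two rational sections $\sigma,\tau$ of $\omega_{\fX/\Omega}$ whose divisors are finite and flat over $\Omega$, have mutually disjoint components, and are supported in the open set $U_{0}$ of Construction \ref{construction:deformation} (and thus avoid the plumbing regions). For the fibre $\fX_{s,t}$ I must then show that
\[
I(s,t)=\sum_{j}\int_{X_{j}(s,t)^{\circ}}\bigl[\log\|\sigma\|_{\hyp}\,\delta_{\gdiv\tau}+\log\|\tau\|_{\hyp}\,\c1(\omega_{\overline{X}_{j}}(\pd X_{j}^{\circ})_{\hyp})\bigr]
\]
depends continuously on $(s,t)$, where $X_{j}(s,t)$ are the connected components of the normalisation of $\fX_{s,t}$. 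The natural decomposition is a thick/thin split: the thick part is the complement of the standard plumbing collars $\{|t_{j}|^{1/2}c^{-1/2}<|u_{j}|<c\}\cup\{|t_{j}|^{1/2}c^{-1/2}<|v_{j}|<c\}$ in $\fX_{s,t}$, and the thin part is their union.

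On the thick part, Wolpert's results \cite{Wolpert:hyperbolic} give smooth convergence, as $(s,t)\rightarrow(s_{0},0)$, of the family hyperbolic metric and its first Chern form to the corresponding objects on the nodal limit. Since the divisors of $\sigma$ and $\tau$ lie in the thick part, the thick contribution depends continuously on $(s,t)$ and converges to the analogous contribution on the singular fibre. The analytic core of the argument is the thin contribution, where the metric degenerates. Using Wolpert's plumbing expansion inside a standard collar, with leading term of order $(\pi/|\log|t_{j}||)\csc(\pi\log|u_{j}|/\log|t_{j}|)\cdot|du_{j}|/|u_{j}|$, together with the plumbing identity $u_{j}v_{j}=t_{j}$, one checks that as $t_{j}\rightarrow 0$ the collar contribution splits and converges to the sum of the two Poincar\'e integrals on the punctured discs $\{0<|u_{j}|<c\}$ and $\{0<|v_{j}|<c\}$ attached to the two branches of the normalisation at $q_{j}$. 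This is exactly the prescription defining the Liouville metric on the singular fibre, which produces the desired continuity at $p$.

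The main obstacle is this last convergence statement: one needs uniform control of the remainders in Wolpert's asymptotic expansion as $(s,t)$ varies in $\Omega$, in order to exchange limit and integration, and simultaneously to handle the factor $\log\|\tau\|_{\hyp}$ inside the thin contribution, which itself has logarithmic growth towards the nodes coming from the pre-log-log hermitian structure. Once this uniform estimate is in place, the remaining computations are routine manipulations with the Poincar\'e metric on a punctured disc.
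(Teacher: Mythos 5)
Your plan mirrors the paper's strategy almost exactly: localize to a small stable deformation $\ff:\fX\rightarrow\Omega$, choose rational sections of $\omega_{\fX/\Omega}$ with divisors away from the nodes, split the Deligne pairing integral into a thick part and a collar (thin) part, use Wolpert's pinching expansion for pointwise convergence, and conclude by dominated convergence. The only minor structural difference is that you put both $\gdiv\sigma$ and $\gdiv\tau$ in $U_{0}$, whereas the paper only needs $\gdiv\tau$ to avoid the collar regions $\overline{A_{j}(t)}\cup\overline{B_{j}(t)}$; the term involving $\delta_{\gdiv\tau}$ (your divisor contribution) is handled separately using flatness of $\gdiv\sigma$ over $\Omega$ and does not need the stronger support assumption.

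The gap you yourself flag, namely a uniform estimate that allows the exchange of limit and integral on the collars, is the one place where your plan stops short of a proof, and the ingredient the paper uses to close it is not a remainder bound in Wolpert's $\csc$-expansion but a simpler independent input: Masur's two-sided bound on the conformal density in the standard collar (Proposition \ref{proposition:Masur}, i.e.\ Masur, Eq.\ 6.6). After shrinking $\Omega$ there is $C>0$ with $1/C\le\rho_{s,t}\le C$ on $A_{j}(t)$ and $B_{j}(t)$, where $ds_{\hyp;s,t}^{2}=\rho_{s,t}\left(|du_{j}|/(|u_{j}|\log|u_{j}|)\right)^{2}$. Combined with the curvature $-1$ constraint (so $\c1(\omega_{\fX/\Omega\,\hyp})$ is the Kähler form of the hyperbolic metric) and the pre-log-log growth of $\log\|\tau\|_{\hyp}$ near the node, this gives, uniformly in $(s,t)$, the integrable dominating function $\log\log|u_{j}|^{-1}\cdot|du_{j}\wedge d\overline{u}_{j}|/(|u_{j}|^{2}(\log|u_{j}|)^{2})$, and Lebesgue dominated convergence then closes the argument. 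Wolpert's expansion is used only to get pointwise convergence of the integrand, not the uniform bound, so trying to track the remainders of that expansion (as you suggest) is both harder than necessary and not what the paper does; supplying Masur's estimate is the missing step.
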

We postpone the proof of Theorem \ref{theorem:liouville} until \textsection \ref{subsection:proof_theorem_liouville}. For the moment we derive a consequence of the theorem.
\begin{corollary}\label{corollary:liouville}
The Liouville metric is continuous on $\SCM_{g,n}^{\an}$.
\end{corollary}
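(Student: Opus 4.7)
My plan is to reduce the statement to Theorem \ref{theorem:liouville} by means of the clutching morphism $\gamma:\SCM_{g,n}\times\SCM_{1,1}^{\times n}\rightarrow\SCM_{g+n,0}$ of Corollary \ref{corollary:clutching}, exploiting the isometry recorded in the second part of Lemma \ref{lemma:clutching_liouville}.

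First I would invoke Theorem \ref{theorem:liouville}, which asserts continuity of the Liouville metric on $\overline{\kappa}_{g+n,0}$ over $\SCM_{g+n,0}^{\an}$. Since pullback of a continuous hermitian metric along a morphism of analytic stacks preserves continuity, the isometry
\[
\gamma^{\ast}\overline{\kappa}_{g+n,0}\simeq\overline{\kappa}_{g,n}\boxtimes\overline{\kappa}_{1,1}^{\boxtimes n}
\]
from Lemma \ref{lemma:clutching_liouville} immediately yields continuity of the product metric on $\overline{\kappa}_{g,n}\boxtimes\overline{\kappa}_{1,1}^{\boxtimes n}$ over $\SCM_{g,n}^{\an}\times(\SCM_{1,1}^{\an})^{n}$.

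The remaining task is to descend continuity from the product to the factor $\SCM_{g,n}^{\an}$. For this I would fix an auxiliary point $q_{0}\in(\SCM_{1,1}^{\an})^{n}(\CC)$ and a local trivializing section $t$ of $\kappa_{1,1}^{\boxtimes n}$ in a neighbourhood of $q_{0}$, which exists because the line bundle is locally trivial; observe that $\|t\|(q_{0})>0$, since the Liouville metric is positive on non-vanishing sections by its pointwise definition in Construction \ref{construction:liouville}. Given $p_{0}\in\SCM_{g,n}^{\an}$ and a local trivializing section $s$ of $\kappa_{g,n}$ near $p_{0}$, the product $s\boxtimes t$ trivializes $\kappa_{g,n}\boxtimes\kappa_{1,1}^{\boxtimes n}$ near $(p_{0},q_{0})$ and its norm factors as $\|s\boxtimes t\|(p,q)=\|s\|(p)\,\|t\|(q)$. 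Specializing $q=q_{0}$ in the continuous function $(p,q)\mapsto\|s\boxtimes t\|(p,q)$ and dividing by the positive constant $\|t\|(q_{0})$ will produce continuity of $\|s\|$ near $p_{0}$.

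There is no genuine obstacle here: all the analytic difficulty — the behaviour of the family hyperbolic metric along the boundary divisor $\pd\SM_{g+n,0}$ — is already encapsulated in Theorem \ref{theorem:liouville}, and the present corollary is a purely formal consequence of that result, the clutching isometry of Lemma \ref{lemma:clutching_liouville}, and a slicing argument in a product of stacks.
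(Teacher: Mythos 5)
Your proposal is correct, and its first step is identical to the paper's: pull back the continuity statement of Theorem \ref{theorem:liouville} via the clutching morphism, using the isometry $\gamma^{\ast}\overline{\kappa}_{g+n,0}\simeq\overline{\kappa}_{g,n}\boxtimes\overline{\kappa}_{1,1}^{\boxtimes n}$ from Lemma \ref{lemma:clutching_liouville}, to get continuity of the box-product metric. The two proofs diverge in the descent to the factor. The paper first applies the special case $g=n=1$ to obtain continuity of $\overline{\kappa}_{1,1}\boxtimes\overline{\kappa}_{1,1}$, then pulls back along the diagonal $\Delta:\SCM_{1,1}\rightarrow\SCM_{1,1}^{\times 2}$ to conclude that $\overline{\kappa}_{1,1}$ itself is continuous, and finally divides $\overline{\kappa}_{g,n}\boxtimes\overline{\kappa}_{1,1}^{\boxtimes n}$ by (the now continuous) $\overline{\kappa}_{1,1}^{\boxtimes n}$ to recover continuity of $pr_{1}^{\ast}\overline{\kappa}_{g,n}$ and hence of $\overline{\kappa}_{g,n}$. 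Your slicing argument is shorter and uses a strictly weaker hypothesis: you only need positivity of $\|t\|$ at the single point $q_{0}$, which is built into Construction \ref{construction:liouville} (each $I_{j}(\sigma,\tau)$ is a finite real number, so the exponential is strictly positive), not continuity of $\overline{\kappa}_{1,1}^{\boxtimes n}$. The trade-off is that the paper's route yields the continuity of $\overline{\kappa}_{1,1}$ as a standalone by-product, whereas your argument does not; but for the purpose of proving Corollary \ref{corollary:liouville} itself, yours is the cleaner reduction.
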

\begin{proof}
We first observe that the metric on $\overline{\kappa}_{g,n}\boxtimes\overline{\kappa}_{1,1}^{\boxtimes n}$ is continuous. Indeed, consider the clutching morphism $\gamma:\SCM_{g,n}\times\SCM_{1,1}^{\times n}\rightarrow\SCM_{g+n,0}$. By Lemma \ref{lemma:clutching_liouville}, we have an isometry
\begin{displaymath}
	\gamma^{\ast}\overline{\kappa}_{g+n,0}\overset{\sim}{\longrightarrow}\overline{\kappa}_{g,n}\boxtimes\overline{\kappa}_{1,1}^{\boxtimes n}.
\end{displaymath}
The claim already follows from Theorem \ref{theorem:liouville}. Notice that, applied to the particular case $g=n=1$, this implies that the metric on $\overline{\kappa}_{1,1}\boxtimes\overline{\kappa}_{1,1}$ is continuous.

Let $\Delta:\SCM_{1,1}\rightarrow\SCM_{1,1}\times\SCM_{1,1}$ be the diagonal morphism. Then we have an isometry
\begin{displaymath}
	\Delta^{\ast}(\overline{\kappa}_{1,1}\boxtimes\overline{\kappa}_{1,1})\overset{\sim}{\longrightarrow}\overline{\kappa}_{1,1}^{\otimes 2}.
\end{displaymath}
We deduce that the metric on $\overline{\kappa}_{1,1}$ is continuous. Together with the continuity of the metric on $\overline{\kappa}_{g,n}\boxtimes\overline{\kappa}_{1,1}^{\boxtimes n}$, this shows that $pr_{1}^{\ast}\overline{\kappa}_{g,n}$ is a continuous hermitian line bundle. Hence so does $\overline{\kappa}_{g,n}$. The proof is complete.
\end{proof}
\begin{remark}
\textit{i}. By means of Teichm\"uller theory it can be shown that the Liouville metric is actually smooth on $\SM_{g,n}^{\an}$.

\textit{ii}. In \cite[Ch. 6]{GFM:thesis} we show that the Liouville metric is pre-log-log along $\pd\SM_{g,n}^{\an}$.

\textit{iii}. The name of Liouville metric is inspired by the Liouville action of Takhtajan-Zograf on $\SM_{0,n}^{\an}$ \cite{ZT:Liouville}.
\end{remark}
\subsection{Proof of Theorem \ref{theorem:liouville}}\label{subsection:proof_theorem_liouville}
The proof of Theorem \ref{theorem:liouville} is based on the next statement and the pinching expansion for the family hyperbolic metric established by Wolpert \cite[Exp. 4.2, p. 445]{Wolpert:hyperbolic}.
\begin{proposition}[Masur \cite{Masur}, Sec. 6, Eq. 6.6]\label{proposition:Masur}
Let $\ff:\fX\rightarrow\Omega$ be a small stable deformation in compact curves of genus $g\geq 2$, as in Construction \ref{construction:deformation}. For the hyperbolic metric $ds_{\hyp;s,t}^{2}$ on $\fX_{s,t}$ and for all $j=1,\ldots,m$ we write
\begin{displaymath}
	\begin{split}
	& ds_{\hyp;s,t}^{2}=\rho_{s,t}(u_{j})\left(\frac{|du_{j}|}{|u_{j}|\log|u_{j}|}\right)^{2},\,\,\,\text{on}\,\,\,A_{j}(t)=\lbrace |t_{j}|^{1/2}\leq |u_{j}|< c\rbrace,\\
	& ds_{\hyp;s,t}^{2}=\rho_{s,t}(v_{j})\left(\frac{|dv_{j}|}{|v_{j}|\log|v_j|}\right)^{2},\,\,\,\text{on}\,\,\,B_{j}(t)=\lbrace |t_{j}|^{1/2}\leq |v_{j}|<c\rbrace.
	\end{split}
\end{displaymath}
Then, after possibly shrinking $\Omega$ in a neighborhood of $0$, there exists a constant $C>0$, independent of $s,t$, such that for all $j=1,\ldots,m$ we have
\begin{displaymath}
	\frac{1}{C}\leq \rho_{s,t}(u_{j})\leq C\,\,\,\text{on}\,\,\,A_j(t)
\end{displaymath}
and
\begin{displaymath}
		\frac{1}{C}\leq \rho_{s,t}(v_{j})\leq C\,\,\,\text{on}\,\,\,B_j(t).
\end{displaymath}
\end{proposition}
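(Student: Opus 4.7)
The strategy is to compare the family hyperbolic metric $ds^{2}_{\hyp;s,t}$ on the collars $A_j(t)$, $B_j(t)$ with two explicit model hyperbolic metrics, using the Schwarz--Pick domain monotonicity principle: if $U\subset V$ are open hyperbolic Riemann surfaces carrying their own hyperbolic metrics, then the inclusion $i\colon U\hookrightarrow V$ satisfies $ds^{2}_{U}\geq i^{\ast}ds^{2}_{V}$. The upper bound on $\rho_{s,t}$ will come from an annular model containing $A_j(t)$; the lower bound from a Keen collar around the pinching geodesic inside $\fX_{s,t}$.

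For the upper bound, the rs coordinate $u_j$ realises the larger region $\lbrace |t_j|/c<|u_j|<c\rbrace$ as a standard annulus of modulus $\log(c^{2}/|t_j|)$ conformally embedded in $\fX_{s,t}$. Its intrinsic hyperbolic metric can be written out explicitly in terms of $\log|u_j|$, and a direct calculation shows that on the sub-annulus $A_j(t)$ it is uniformly comparable to the Poincar\'e cusp model $(|du_j|/(|u_j|\log|u_j|))^{2}$, with a constant independent of $t_j$---including in the limit $t_j\to 0$, where the annulus degenerates to a punctured disc and the model metric degenerates to the Poincar\'e cusp. Domain monotonicity applied to $A_j(t)\subset\fX_{s,t}$ then provides the upper bound on $\rho_{s,t}(u_j)$, and symmetrically for $\rho_{s,t}(v_j)$.

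For the lower bound the key input is Keen's collar lemma: for $|t_j|$ small the node is smoothed into a short simple closed geodesic $\gamma_j\subset\fX_{s,t}$ of hyperbolic length $\ell_j(s,t)\to 0$, around which there is an explicit hyperbolic collar whose width grows like $\log(1/\ell_j)$. Combining with Wolpert's pinching estimate, which bounds $\ell_j(s,t)$ from above in terms of $|t_j|$, one sees that this collar contains $A_j(t)\cup B_j(t)$ after shrinking $\Omega$ if necessary. The explicit hyperbolic metric of the collar is again uniformly comparable to the Poincar\'e cusp model on $A_j(t)$, and domain monotonicity applied to the inclusion of the collar in $\fX_{s,t}$ yields the lower bound.

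The main obstacle is uniformity in the parameter $s\in\CC^{r}$ and across the degenerate locus $t_j=0$. Uniformity in $s$ follows from the holomorphic dependence of the Beltrami solution on $s$ in Construction \ref{construction:deformation}, together with the fact that the supports of the $\nu_k$ are separated from the charts $(U_j,u_j)$, $(V_j,v_j)$: up to a uniformly small correction the latter remain rs coordinates for the deformed Riemann surfaces. Uniformity across $t_j=0$ requires a version of Keen's lemma that interpolates smoothly between the annular regime $t_j\neq 0$ and the cuspidal regime $t_j=0$; after possibly shrinking $\Omega$, this yields a single constant $C>0$ valid throughout, which is the content of the proposition.
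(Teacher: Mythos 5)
The paper does not prove this proposition; it is imported verbatim from Masur \cite{Masur}, Section~6, and the proof environment is absent. So there is no internal argument to compare against, and you are offering an independent proof. Your general strategy --- Schwarz--Pick monotonicity, the Keen collar around the pinching geodesic, Wolpert's estimate $\ell_j\asymp 1/\log|t_j|^{-1}$ --- is in the right circle of ideas, but the execution has a genuine gap concentrated at the \emph{outer} edge $|u_j|\to c$ of $A_j(t)$.

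For the upper bound, the ``direct calculation'' you invoke is false. The hyperbolic density of the annulus $\{|t_j|/c<|u_j|<c\}$ is $\frac{\pi}{\log(c^{2}/|t_j|)}\cdot\frac{1}{|u_j|\sin\theta}$ with $\theta=\pi\log(c|u_j|/|t_j|)/\log(c^{2}/|t_j|)$, and this blows up as $\theta\to\pi$, i.e.\ as $|u_j|\to c$; the Poincar\'e cusp density $1/(|u_j|\log|u_j|^{-1})$ stays finite there. So the two metrics are \emph{not} uniformly comparable on $A_j(t)$, and domain monotonicity against this annulus gives an upper bound on $\rho_{s,t}$ that diverges near $|u_j|=c$. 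The same defect persists in the limit $t_j=0$: the hyperbolic density of the punctured disc $D^{\ast}(0,c)$ is $1/(|u_j|\log(c/|u_j|))$, which diverges at $|u_j|=c$. One needs a strictly larger annulus $\{|t_j|/c'<|u_j|<c'\}$ with $c'>c$ (which Construction~\ref{construction:deformation} permits, since $u_j(U_j)$ strictly contains $D(0,c)$), or else a separate compactness argument on a fixed outer ring $\{c''\le|u_j|<c\}$.

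For the lower bound there are two issues. First, monotonicity is applied in the wrong direction: the inclusion of the collar into $\fX_{s,t}$ gives (collar metric) $\geq$ (restricted surface metric), which again is an \emph{upper} bound on $\rho_{s,t}$. What you actually need is Keen's collar lemma in its strong form (the collar is isometrically embedded, so its model metric \emph{equals} the restriction of $ds^{2}_{\hyp;s,t}$), which would give both bounds at once --- but only on the portion of $A_j(t)\cup B_j(t)$ lying inside the collar. Second, whether the Keen collar does contain $A_j(t)\cup B_j(t)$ is delicate and is \emph{not} fixed by shrinking $\Omega$: Wolpert's estimate gives $\ell_j\sim 2\pi^{2}/\log|t_j|^{-1}$, so the collar half-width $w(\ell_j)\approx\log(4/\ell_j)$ and the distance from the core geodesic to $|u_j|=c$ both grow like $\log\log|t_j|^{-1}$, with the comparison of constants governed by the fixed $c$, not by $|t_j|$. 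So ``after shrinking $\Omega$ if necessary'' does not produce the claimed containment; one needs to shrink $c$ in the construction or, again, treat the outer ring $\{c''\le|u_j|<c\}$ separately by compactness (there the fibre metric $ds^{2}_{\hyp;s,t}$ and the model are both bounded above and below, uniformly in $(s,t)$, because this ring stays in the thick part of the degeneration). For the record, Masur's own argument does not proceed by pure domain monotonicity: he compares with a grafted metric (collar model near the node, hyperbolic metric away from it) and uses a Schwarz/Ahlfors-type curvature comparison to control the discrepancy, which handles the outer ring built in.
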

\begin{proof}[Proof of Theorem \ref{theorem:liouville}]
Let $\ff:\fX\rightarrow\Omega$ be a small stable deformation of a stable curve $X$ of genus $g\geq 2$. After possibly restricting $\Omega$, we can find meromorphic sections $\sigma$, $\tau$ of $\omega_{\fX/\Omega}$ whose divisors are relative over $\Omega$, with mutually disjoint irreducible components, disjoint from the singular points of the fibers of $\ff$. We can further assume that $\gdiv\tau$  does not meet $\cup_{j}(\overline{A_{j}(t)}\cup\overline{B_{j}(t)})$, $(s,t)\in\Omega$. We have to prove that the function
\begin{displaymath}
	(s,t)\longmapsto\log\|\langle\sigma\mid_{\fX_{s,t}},\tau\mid_{\fX_{s,t}}\rangle\|
\end{displaymath}
is continuous at $0$. Introduce the functions
\begin{displaymath}
	\begin{split}
		&F(s,t)=\int_{\fX_{s,t}}\log\|\sigma\|_{\hyp}\delta_{\gdiv\tau},\\
		&G(s,t)=\int_{\fX_{s,t}}\log\|\tau\|_{\hyp}\c1(\omega_{\fX/\Omega\hyp}),
	\end{split}
\end{displaymath}
so that $\log\|\langle\tau\mid_{\fX_{s,t}},\sigma\mid_{\fX_{s,t}}\rangle\|=F(s,t)+G(s,t)$. It suffices to show that $F$ and $G$ are continuous at $0$. The continuity of $F$ at $0$ is a consequence of the flatness of $\gdiv\sigma$ over $\Omega$ and \cite[Exp. 4.2, p. 445]{Wolpert:hyperbolic}. For the continuity of $G$ we proceed in two steps, according to the decomposition $G=G_1+G_2$, with
\begin{displaymath}
	\begin{split}
		& G_{1}(s,t)=\int_{\fX_{s,t}\setminus\cup_{j}(A_{j}(t)\cup B_{j}(t))}\log\|\tau\|_{\hyp}\c1(\omega_{\fX/\Omega\hyp}),\\
		& G_{2}(s,t)=\int_{\cup_{j}(A_{j}(t)\cup B_{j}(t))}\log\|\tau\|_{\hyp}\c1(\omega_{\fX/\Omega\hyp}).
	\end{split}	
\end{displaymath}
\textit{Step 1}. That $G_{1}(s,t)$ is continuous readily follows from \cite[Exp. 4.2, p. 445]{Wolpert:hyperbolic}, the curvature $-1$ constraint for the hyperbolic metric on $\fX_{s,t}$ and Lebesegue's dominate convergence theorem.\\
\textit{Step 2}. We treat $G_{2}(s,t)$. Observe that over the annulus $A_{j}(t)$, any differential form can be expressed in terms of the holomorphic coordinate $u_{j}$ (even for $t=0$, provided we exclude $u_{j}=0$). In the coordinate $u_j$ we have the pointwise convergence
\begin{displaymath}
	\begin{split}
 	\log(\|\tau\|_{\hyp}\mid_{A_{j}(t)})&\c1(\omega_{\fX/\Omega\hyp})\mid_{A_{j}(t)}\to\\
	&\log(\|\tau\|_{\hyp}\mid_{A_{j}(0)})\c1(\omega_{\fX/\Omega\hyp})\mid_{A_{j}(0)}\,\,\,\text{as}\,\,\,(s,t)\to 0.
	\end{split}
\end{displaymath}
Indeed, this is a consequence of \cite[Exp. 4.2, p. 445]{Wolpert:hyperbolic} and the curvature $-1$ condition for the hyperbolic metric on $\fX_{s,t}$. The corresponding fact is true for the annuli $B_{j}(t)$, as well. Now, by assumption, $|\gdiv\tau|\cap\overline{A_{j}(t)}=\emptyset$, $(s,t)\in\Omega$. From Proposition \ref{proposition:Masur} and the curvature $-1$ constraint, we derive a uniform bound
\begin{displaymath}
	\left|\log(\|\tau\|_{\hyp}\mid_{A_{j}(t)})\c1(\omega_{\fX/\Omega\hyp})\mid_{A_{j}(t)}\right|\ll\log\log|u_{j}|^{-1}\frac{|du_{j}\wedge d\overline{u}_{j}|}{|u_{j}|^{2}(\log|u_{j}|)^{2}}.
\end{displaymath}
Notice that we used that $\gdiv\tau$ is away from the singular points of the fibers of $\ff$. An analogous bound holds on $B_{j}(t)$. By Lebesgue's dominate convergence theorem we arrive to
\begin{displaymath}
	\begin{split}
	G_{2}(s,t)=\int_{\cup_{j}(A_{j}(t)\cup B_{j}(t))}&\log\|\tau\|_{\hyp}\c1(\omega_{\fX/\Omega\hyp})\\
	&\to\int_{\cup_{j}(A_{j}(0)\cup B_{j}(0))}\log\|\tau\|_{\hyp}\c1(\omega_{\fX/\Omega\hyp}).
	\end{split}
\end{displaymath}
This completes the proof of the theorem.
\end{proof}
\section{On the degeneracy of the Quillen metric}\label{section:analytic2}
\subsection{Statement of the theorem}\label{subsection:statement_theorem_selberg}
Let $(X; a_{1},\ldots, a_{n})$ be a smooth $n$-pointed stable curve of genus $g$ and $(T_{1}; b_{1})$, $\ldots$, $(T_{n}; b_{n})$ $n$ smooth 1-pointed stable curves of genus $1$, all over $\CC$. They define complex valued points $P\in\SCM_{g,n}(\CC)$, $Q_{1},\ldots, Q_{n}\in\SCM_{1,1}(\CC)$, respectively. We apply the clutching morphism $\gamma:\SCM_{g,n}\times\SCM_{1,1}^{\times n}\rightarrow\SCM_{g+n,0}$ to $(P, Q_{1},\ldots,Q_{n})$. We obtain a complex valued point $R\in\SCM_{g+n,0}(\CC)$. The curve represented by $R$ is constructed as the quotient analytic space
\begin{displaymath}
	Y=(X\sqcup T_{1}\sqcup\ldots\sqcup T_{n})/(a_{1}\sim b_{1},\ldots,a_{n}\sim b_{n}).
\end{displaymath}
Since $Y$ is compact, Chow's theorem ensures the algebraicity of $Y$. 

Construct a small stable deformation $\ff:\fY\rightarrow\Omega$ of $Y$ as described by Construction \ref{construction:deformation}. We build the family $\fg:\fZ\rightarrow D$ by restriction of $\ff$ to the locus $s_{1}=\ldots=s_{r}=0$ and $t_{1}=\ldots=t_{n}=t\in D$. The fiber $\fZ_{t}=\fg^{-1}(t)$ is non-singular for $t\neq 0$, of genus $g+n\geq 2$. Let $X^{\circ}:=X\setminus\lbrace a_{1},\ldots,a_{n}\rbrace$, $T_{j}^{\circ}=T_{j}\setminus\lbrace b_{j}\rbrace$, $j=1,\ldots,n$. Following the conventions of Section \ref{section:conventions}, we denote by $Z(\fZ_{t},s)$, $Z(X^{\circ},s)$ and $Z(T_{j}^{\circ},s)$ the Selberg zeta functions of $\fZ_{t}$, $X^{\circ}$ and $T_{j}^{\circ}$, $t\neq 0$, $j=1,\ldots,n$, respectively.
\begin{theorem}\label{theorem:Wolpert-Burger}
For $t\in D\setminus\lbrace 0\rbrace$, we have the convergence
\begin{equation}\label{equation:12}
	\begin{split}
		Z^{\prime}(\fZ_{t},1)&|t|^{-n/6}\to\\
		&\frac{1}{\pi^{n}}\left(\frac{n}{2g-2+n}+1\right)Z^{\prime}(X^{\circ},1)\prod_{j}Z^{\prime}(T_{j}^{\circ},1)\,\,\,\text{as}\,\,\,t\to 0.
	\end{split}
\end{equation}
\end{theorem}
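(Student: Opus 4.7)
The plan is to combine three analytic inputs: Wolpert's degeneration theorem for the Selberg zeta function in a pinching family \cite{Wolpert:Selberg}, Burger's theorem on the small eigenvalues of the hyperbolic laplacian \cite{Burger}, and Wolpert's pinching expansion of the family hyperbolic metric \cite[Exp. 4.2]{Wolpert:hyperbolic}. The first controls $Z(\fZ_t,s)$ globally as a function of $s$; the second locates the critical zeros that collapse to $s=1$; the third converts the lengths of the pinching geodesics into explicit functions of $t$.

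On $\fZ_t$ the $n$ pinching geodesics have hyperbolic lengths $\ell_1(t),\ldots,\ell_n(t)$ tending to zero. For small $t\ne 0$, the Selberg zeta function $Z(\fZ_t,s)$ has a simple zero at $s=1$ together with $n$ further zeros $s_i(t)$ converging to $1$, coming from small eigenvalues $\lambda_i(t)=s_i(t)(1-s_i(t))$ of the laplacian. By Burger's theorem, $\lambda_i(t)/\ell_i(t)\to \mu_i/\pi$, where $\mu_1,\ldots,\mu_n$ are the non-zero eigenvalues of the weighted graph laplacian on the dual graph of $Y$ with vertex weights equal to the hyperbolic volumes $\mathrm{vol}(X^\circ)=2g-2+n$ and $\mathrm{vol}(T_j^\circ)=1$. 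This dual graph is a star with $n$ leaves; a direct computation of the eigenvalue equation gives $\mu_1=\cdots=\mu_{n-1}=1$ and $\mu_n=1+n/(2g-2+n)$, so that $\prod_i\mu_i=n/(2g-2+n)+1$.

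Writing $Z(\fZ_t,s)=(s-1)\prod_i(s-s_i(t))\cdot H(\fZ_t,s)$ with $H(\fZ_t,\cdot)$ regular and nonvanishing at $s=1$, one has $Z'(\fZ_t,1)=\prod_i(1-s_i(t))\cdot H(\fZ_t,1)$ with $\prod_i(1-s_i(t))\sim (\ell(t)/\pi)^n\,(n/(2g-2+n)+1)$ by the preceding step. Wolpert's degeneration theorem provides, uniformly on compact subsets of a neighborhood of $s=1$ away from the small zeros, an asymptotic
\begin{equation*}
Z(\fZ_t,s)=F(s,\ell_1(t),\ldots,\ell_n(t))\cdot Z(X^\circ,s)\prod_{j=1}^{n}Z(T_j^\circ,s)\cdot(1+o(1)),
\end{equation*}
where $F$ is built from Selberg zeta functions of hyperbolic cylinders. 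A direct expansion of $Z_\ell(1)=\prod_k(1-e^{-(1+k)\ell})^2$ shows $F\sim \prod_i\ell_i^{-1}\exp(-\pi^2/(3\ell_i))$ near $s=1$. Dividing by the zero of order $n+1$ at $s=1$ of the limit product $Z(X^\circ,s)\prod_j Z(T_j^\circ,s)$ and matching singular parts, the cylinder pole $\prod_i\ell_i(t)^{-1}$ cancels one factor of $\ell_i(t)$ from $\prod_i\lambda_i(t)$, yielding the residual asymptotic
\begin{equation*}
Z'(\fZ_t,1)\sim\frac{1}{\pi^n}\Bigl(\frac{n}{2g-2+n}+1\Bigr)Z'(X^\circ,1)\prod_j Z'(T_j^\circ,1)\cdot\prod_i e^{-\pi^2/(3\ell_i(t))}.
\end{equation*}
Finally, Wolpert's pinching expansion specialized to $t_1=\cdots=t_n=t$ gives $\ell_i(t)=-2\pi^2/\log|t|+o(|\log|t||^{-1})$, hence $\exp(-\pi^2/(3\ell_i(t)))\sim|t|^{1/6}$, producing the prefactor $|t|^{n/6}$ and completing the proof of (\ref{equation:12}).

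The main obstacle is to match the small-zero factorization with Wolpert's asymptotic in a neighborhood of $s=1$ that shrinks with $t$, since Wolpert's bounds are a priori uniform only away from the collapsing zeros. A clean extraction of $H(\fZ_t,1)$ in the limit, so that the cancellation between the Burger contribution $\prod_i\lambda_i(t)$ and the cylinder pole $\prod_i\ell_i(t)^{-1}$ is exact and produces the clean constant $\pi^{-n}(n/(2g-2+n)+1)$, requires a careful local analysis of the scattering determinants attached to the cusps of $X^\circ$ and the $T_j^\circ$ and of their coupling to the pinching collars via the trace formula.
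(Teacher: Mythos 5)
Your overall architecture is the same as the paper's: the proof of Theorem~\ref{theorem:Wolpert-Burger} in the text is exactly the conjunction of Wolpert's degeneration of the Selberg zeta function (Theorem~\ref{theorem:Wolpert2}, packaged as Theorem~\ref{theorem:Wolpert3}), the evaluation of $\prod_j \lambda_j(t)/l(\gamma_j(t))$ via Burger's theorem on a weighted star graph (Theorem~\ref{theorem:Burger}, from Theorem~\ref{theorem:Burger2} and Proposition~\ref{proposition:Burger}), and Wolpert's length estimate $l(\gamma_j(t)) = 2\pi^2/\log|t|^{-1} + O((\log|t|)^{-4})$. Your graph-eigenvalue computation $\mu_1=\cdots=\mu_{n-1}=1$, $\mu_n=1+n/(2g-2+n)$, the identification of $\prod_j e^{-\pi^2/(3\ell_j(t))}\sim|t|^{n/6}$, and the final cancellation all match the paper. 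Two of your intermediate constants, however, are off by a factor of $2\pi$ each: the paper's Theorem~\ref{theorem:Burger2} gives $\lambda_j(t)/\mu_j(t)\to 1/(2\pi^2)$, so $\lambda_i(t)/\ell_i(t)\to\mu_i/(2\pi^2)$ rather than $\mu_i/\pi$; and Schulze's asymptotic (cited as \cite[Lemma 39]{Schulze}) gives $Z_\ell(1)\sim 2\pi\,\ell^{-1}e^{-\pi^2/(3\ell)}$, not $\ell^{-1}e^{-\pi^2/(3\ell)}$. These two errors cancel, so your final formula is correct, but for an accidental reason.

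The genuine gap is the one you flag yourself at the end: you cannot evaluate your $H(\fZ_t,s)$ at $s=1$ directly, because Wolpert's convergence $Z(\fZ_t,s)/\prod_j Z_{\ell_j(t)}(s)\to Z(X^\circ,s)\prod_j Z(T_j^\circ,s)$ is only controlled away from the collapsing zeros. The paper does \emph{not} close this by an analysis of scattering determinants of the cusped limit surfaces and their coupling to the collars; instead it invokes Theorem~\ref{theorem:Wolpert2}~\textit{iii} (due to Schulze \cite[Th.~38]{Schulze} and Wolpert \cite[proof of Conj.~2]{Wolpert:Selberg}): the quotient $Q_t(s)=Z(\fZ_t,s)/\bigl[\prod_j Z_{l(\gamma_j(t))}(s)\prod_{j=0}^{n}(s^2-s+\lambda_j(t))\bigr]$ is holomorphic on $D(1,1/2)$ and uniformly bounded in $t$ on compacta. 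Montel's theorem then gives a normal family, the limit is identified pointwise away from $s=1$ by Theorem~\ref{theorem:Wolpert2}~\textit{i}--\textit{ii}, and one evaluates at $s=1$ by L'Hôpital. Without this uniform bound and the normal-family argument, your ``matching singular parts'' step is not justified, and the suggestion to patch it with a trace-formula/scattering computation points in the wrong direction (the bound needed concerns the degenerating compact family $\fZ_t$, not the cusp data of the limits).
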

The proof of the theorem is detailed throughout the next subsections.
\subsection{Degeneracy of the Selberg zeta function}\label{subsection:degeneracy_selberg}
We undertake the notations in Theorem \ref{theorem:Wolpert-Burger}. For every $t\in D$, $t\neq 0$, denote by $\Delta_{t}=d^{\ast}d$ the scalar hyperbolic laplacian on $\fZ_{t}$. We notice that $\Delta_{t}$ is obtained, via a fuchsian uniformization, by descent of $-y^{2}(\pd^{2}/\pd x^{2}+\pd^{2}/\pd y^{2})$ on $\HH$. It is well-known that $\Delta_{t}$ admits a unique non-negative and self-adjoint extension to the Hilbert space $L^{2}(\fZ_{t},\CC)$ \cite{Iwaniec}. For the eigenvalues of $\Delta_{t}$, counted with multiplicity, we write $\lambda_{0}(t)=0<\lambda_{1}(t)\leq\lambda_{2}(t)\leq\ldots$.
\begin{theorem}[Wolpert \textit{et al.}]\label{theorem:Wolpert2}
i. As $t\to 0$, the eigenvalues that converge to $0$ are exactly $\lambda_{1}(t),\ldots,\lambda_{n}(t)$.

ii. Let $\gamma_{1}(t),\ldots,\gamma_{n}(t)\subset\fZ_{t}$ be the simple closed geodesics that are pinched to a node as $t\to 0$. Then the holomorphic function
\begin{displaymath}
	\frac{Z(\fZ_{t},s)}{\prod_{j}Z_{l(\gamma_{j}(t))}(s)},\,\,\,\Real s>\frac{1}{2}
\end{displaymath}
uniformly converges to $Z(X^{\circ},s)\prod_{j}Z(T_{j}^{\circ},s)$ as $t\to 0$.

iii. Let $K_{1}\subset D(1,1/2)$ and $K_{2}\subset D$ be relatively compact open subsets. Then there is a uniform bound
\begin{displaymath}
	\left|\frac{Z(\fZ_{t},s)}{\prod_{j}Z_{l(\gamma_{j}(t))}(s)\prod_{j=0}^{n}(s^{2}-s+\lambda_{j}(t))}\right|\leq\beta,\,\,\,s\in K_{1},\,\,\,t\in K_{2}\setminus\lbrace 0\rbrace.
\end{displaymath}
\end{theorem}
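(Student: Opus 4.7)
The plan is to assemble the three claims from the existing literature on degenerating hyperbolic Riemann surfaces, principally Wolpert's theorem on the Selberg zeta function \cite{Wolpert:Selberg}, together with the Cheeger--type estimates for small eigenvalues due to Schoen--Wolpert--Yau and Hejhal.

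For part (i), that exactly $n$ positive eigenvalues converge to $0$, I would argue in two directions. For the lower bound, one builds $n$ test functions, one for each pinching geodesic $\gamma_{j}(t)$, that are approximately locally constant on one half of the corresponding collar, vanish on the other, and interpolate across the narrow neck in the Fermi coordinate transverse to $\gamma_{j}(t)$. Since the collars are disjoint, this produces $n$ linearly independent functions whose Rayleigh quotients tend to $0$, and by min--max at least $n$ positive eigenvalues collapse to $0$. For the upper bound, the complement of the collars in $\fZ_{t}$ converges geometrically to $X^{\circ}\sqcup T_{1}^{\circ}\sqcup\ldots\sqcup T_{n}^{\circ}$, on which the hyperbolic Laplacian has a positive spectral gap; a Cheeger--type estimate then shows that the $(n+2)$-nd eigenvalue of $\fZ_{t}$ stays bounded below.

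For part (ii), I would invoke Wolpert \cite{Wolpert:Selberg} directly. The heuristic is the decomposition of the infinite product $Z(\fZ_{t},s)$ into two blocks: the singular factors $Z_{l(\gamma_{j}(t))}(s)$ attached to the $n$ pinching geodesics, which carry the entire degenerate behavior as $t\to 0$, and the remaining factors attached to geodesics whose length stays bounded below. Via the collar lemma and the pinching expansion \cite[Exp. 4.2]{Wolpert:hyperbolic}, these non-singular factors correspond to geodesics on the limit surfaces $X^{\circ}, T_{j}^{\circ}$ with lengths converging to theirs, yielding the product $Z(X^{\circ},s)\prod_{j}Z(T_{j}^{\circ},s)$ in the limit. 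Uniform convergence on $\Real s>1/2$ is obtained by applying the Selberg trace formula to the quotient on both sides, reducing the statement to convergence of heat kernel traces on the thick parts.

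For part (iii), I would combine (i) and (ii) with the spectral interpretation of the zeros of $Z$: the function $Z(\fZ_{t},s)$ has zeros at $s=1/2\pm\sqrt{1/4-\lambda_{j}(t)}$, which collapse to $\{0,1\}$ as $\lambda_{j}(t)\to 0$ for $j=0,\ldots,n$. Multiplying by $\prod_{j=0}^{n}(s^{2}-s+\lambda_{j}(t))$ cancels precisely these zeros, so the resulting family is holomorphic on a neighborhood of $K_{1}$ and, by (ii), converges uniformly away from $\{0,1\}$; a normal families argument then provides the bound uniformly in $t\in K_{2}\setminus\{0\}$. The main obstacle is part (ii), which requires matching, uniformly in $t$, the geodesics of $\fZ_{t}$ with those of the limit surfaces in a way sharp enough to extract holomorphic convergence — this is exactly where Wolpert's delicate analysis is indispensable.
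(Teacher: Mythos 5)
The paper does not re-prove this theorem: it simply cites \cite{SWY} for (i) and \cite[Th.\ 35, Th.\ 38]{Schulze} together with \cite[proof of Conj.\ 1 and Conj.\ 2]{Wolpert:Selberg} for (ii) and (iii). Your proposal goes further, sketching arguments for all three parts, and the sketches are consistent with what those references actually do. A few remarks on the comparison.

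For (i), your min--max construction of collar test functions together with a thick-part spectral gap is precisely the Schoen--Wolpert--Yau mechanism, so no issue there. One precision worth keeping in mind: the limit $X^{\circ}\sqcup T_{1}^{\circ}\sqcup\ldots\sqcup T_{n}^{\circ}$ has $n+1$ connected components, so its kernel of the Laplacian has dimension $n+1$, and what the argument gives is that $\lambda_{n+1}(t)$ stays bounded away from $0$, which matches the statement. Also note the limit surfaces are non-compact with cusps, so "positive spectral gap" must be read as: the discrete spectrum is bounded below by a positive constant once the $n+1$ zero eigenvalues are removed (the bottom of the essential spectrum is $1/4$, which does no harm).

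For (ii), the product-decomposition picture is only valid for $\Real s>1$, and you correctly flag that the extension to $\Real s>1/2$ runs through the trace formula and heat-kernel comparison on thick parts; that is where the real work of \cite{Wolpert:Selberg} and \cite{Schulze} lies. You do not name Schulze, which the paper leans on as the cleaner modern reference, but that is a bibliographic detail.

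For (iii), your route is genuinely different from the paper's. The paper treats (iii) as an independent input (it is what feeds Montel's theorem in the subsequent Theorem~\ref{theorem:Wolpert3}); you instead propose to \emph{derive} (iii) from (i) and (ii): the zeros of $Z(\fZ_{t},s)$ in $D(1,1/2)$ are located at $1/2+\sqrt{1/4-\lambda_{j}(t)}$ and, by (i), the only ones that approach $s=1$ come from $\lambda_{0},\ldots,\lambda_{n}$, hence they cancel exactly (with multiplicity) against the zeros of $\prod_{j=0}^{n}(s^{2}-s+\lambda_{j}(t))$; then the quotient is holomorphic on $D(1,1/2)$, and a bound on a circle $|s-1|=r$ follows from (ii) plus a lower bound on $|g_{t}|$ there, whence the bound on $K_{1}$ by the maximum principle. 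This is correct and arguably more self-contained than citing (iii) directly. The only imprecision is terminological: what closes the argument is the maximum principle (or the Cauchy integral formula), not a "normal families argument"; Montel's theorem takes local boundedness as a \emph{hypothesis} and so cannot be used to establish the bound without circularity.

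In short: same sources and same spirit as the paper's citation, with more detail supplied, and with a legitimate alternative derivation of (iii) that the paper does not attempt.
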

\begin{proof}
The first assertion is established in \cite{SWY}. The second and third items are proven in \cite[Th. 35 and Th. 38]{Schulze} and \cite[proof of Conj. 1 and Conj. 2]{Wolpert:Selberg}.
\end{proof}
\begin{theorem}\label{theorem:Wolpert3}
For $t\in D\setminus\lbrace 0\rbrace$, we have the convergence
\begin{equation}\label{equation:13}
\begin{split}
\frac{1}{(2\pi)^{n}}Z^{\prime}(\fZ_{t},1)\prod_{j=1}^{n}\frac{l(\gamma_{j}(t))}{\lambda_{j}(t)}&\exp\left(\frac{\pi^{2}}{3l(\gamma_{j}(t))}\right)\to\\
	&Z^{\prime}(X^{\circ},1)\prod_{j=1}^{n}Z^{\prime}(T_{j}^{\circ},1)\,\,\,\text{as}\,\,\,t\to 0.
\end{split}
\end{equation}
\end{theorem}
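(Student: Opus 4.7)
My plan is to isolate, inside $Z(\fZ_t,s)$ near $s=1$, the two degenerating factors controlled by Theorem \ref{theorem:Wolpert2}---the length factor $\prod_j Z_{l(\gamma_j(t))}(s)$ from the pinching geodesics and the small-eigenvalue factor $\prod_{j=0}^n(s^2-s+\lambda_j(t))$---and to extract the statement from a Vitali-type argument on the resulting bounded auxiliary function, combined with a short asymptotic computation for $Z_l(1)$ as $l\to 0^+$.

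First I would introduce, for $t\in D\setminus\{0\}$,
\[
H(t,s):=\frac{Z(\fZ_t,s)}{\prod_{j=1}^n Z_{l(\gamma_j(t))}(s)\,\prod_{j=0}^n(s^2-s+\lambda_j(t))}.
\]
The denominator vanishes simply at $s=1$ (via the factor $s(s-1)$ attached to $\lambda_0(t)=0$) and so does $Z(\fZ_t,s)$, so $H(t,s)$ extends holomorphically across $s=1$. Theorem \ref{theorem:Wolpert2}(iii) bounds the family $\{H(t,\cdot)\}_t$ uniformly on a compact disc around $s=1$, while parts (i)--(ii) yield the pointwise convergence, for $s\neq 1$ close to $1$,
\[
H(t,s)\longrightarrow H(0,s):=\frac{Z(X^\circ,s)\prod_j Z(T_j^\circ,s)}{(s(s-1))^{n+1}}\qquad\text{as }t\to 0.
\]
Since each of $Z(X^\circ,s)$ and $Z(T_j^\circ,s)$ has a simple zero at $s=1$, the numerator vanishes there to order exactly $n+1$, matching the denominator, so $H(0,s)$ extends holomorphically at $s=1$ with value $Z'(X^\circ,1)\prod_j Z'(T_j^\circ,1)$. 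Vitali's theorem then upgrades the pointwise convergence to uniform convergence in a smaller neighborhood of $s=1$; in particular $H(t,1)\to H(0,1)$.

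Next I would evaluate $H(t,1)$ directly. Since $Z(\fZ_t,s)=(s-1)Z'(\fZ_t,1)+O((s-1)^2)$ and $\prod_{j=0}^n(s^2-s+\lambda_j(t))=s(s-1)\prod_{j=1}^n(s^2-s+\lambda_j(t))$, a two-term Taylor expansion yields
\[
H(t,1)=\frac{Z'(\fZ_t,1)}{\prod_j Z_{l(\gamma_j(t))}(1)\,\prod_{j=1}^n\lambda_j(t)},
\]
so the previous step produces the intermediate limit
\[
\frac{Z'(\fZ_t,1)}{\prod_j Z_{l(\gamma_j(t))}(1)\,\prod_{j=1}^n\lambda_j(t)}\longrightarrow Z'(X^\circ,1)\prod_j Z'(T_j^\circ,1).
\]
To recover (\ref{equation:13}), it remains to analyze $Z_l(1)$ as $l\to 0^+$: writing $Z_l(1)$ as the square of a $q$-Pochhammer symbol at $q=e^{-l}$ and applying the modular identity $\eta(-1/\tau)=\sqrt{-i\tau}\,\eta(\tau)$ at $\tau=il/(2\pi)$ produces the standard asymptotic $Z_l(1)\sim(2\pi/l)\exp(-\pi^2/(3l))$, whose substitution in the intermediate limit transforms it into (\ref{equation:13}).

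The main obstacle is the Vitali step. It succeeds only because the pointwise limit $H(0,s)$ extends holomorphically at $s=1$, which requires $Z(X^\circ,s)\prod_j Z(T_j^\circ,s)$ to vanish to the same order $n+1$ as $(s(s-1))^{n+1}$ at $s=1$. This matching is exactly the spectral content of Theorem \ref{theorem:Wolpert2}(i): as $t\to 0$, the $n$ small nonzero eigenvalues $\lambda_1(t),\dots,\lambda_n(t)$ tend to $0$, so the associated spectral zeros of $Z(\fZ_t,s)$ at $s=\tfrac{1}{2}+\sqrt{1/4-\lambda_j(t)}$ all migrate to $s=1$ and supply, together with the trivial zero there, the full $(n+1)$-fold vanishing of the limit. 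Without this precise interplay between the geometric degeneration (pinching lengths) and the spectral degeneration (small eigenvalues), the factorization and the Vitali argument both collapse; it is fundamental that Theorem \ref{theorem:Wolpert2} packages these two sides together.
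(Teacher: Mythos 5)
Your proposal is correct and follows essentially the same route as the paper: both introduce the auxiliary quotient $Z(\fZ_t,s)/\bigl(\prod_j Z_{l(\gamma_j(t))}(s)\prod_{j=0}^n(s^2-s+\lambda_j(t))\bigr)$, invoke Theorem \ref{theorem:Wolpert2}(iii) for local uniform boundedness and (i)--(ii) for pointwise convergence away from $s=1$, pass to a holomorphic limit near $s=1$, and finish with the asymptotic $Z_l(1)\sim(2\pi/l)\exp(-\pi^2/(3l))$. The only cosmetic differences are that the paper argues via Montel (extracting subsequences and identifying the common limit) where you cite Vitali directly, and that you re-derive the $Z_l(1)$ asymptotic from the modularity of Dedekind's $\eta$ where the paper quotes Schulze.
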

\begin{proof}
For every $t\in D$, define the meromorphic function
\begin{displaymath}
	Q_{t}(s)=\frac{Z(\fZ_{t},s)}{\prod_{j}Z_{l(\gamma_{j}(t))}(s)\prod_{j=0}^{n}(s^{2}-s+\lambda_{j}(t))},\,\,\,s\in D(1,1/2).
\end{displaymath}
By Theorem \ref{theorem:Wolpert2} \textit{iii}, $Q_{t}(s)$ extends to a holomorphic function on $D(1,1/2)$. Furthermore, for every sequence $\lbrace t_{n}\rbrace_{n}\to 0$, $t_{n}\neq 0$, there exists a subsequence $\lbrace t_{n_k}\rbrace$ such that $\lbrace Q_{t_{n_k}}\rbrace_{k}$ converges to a holomorphic function $H$, uniformly over compact subsets of $D(1,1/2)$ (Montel's theorem). In particular $Q_{t_{n_k}}(1)\to H(1)$ as $k\to +\infty$. By Theorem \ref{theorem:Wolpert2} \textit{i}--\textit{ii}, we see that 
\begin{displaymath}
	H(s)=\frac{Z(X^{\circ},s)}{s^{2}-s}\prod_{j=1}^{n}\frac{Z(T_{j}^{\circ},s)}{s^{2}-s},\,\,\,s\in D(1,1/2).
\end{displaymath}
The Selberg zeta functions $Z(\fZ_{t},s)$, $Z(X^{\circ},s)$ and $Z(T_{j}^{\circ},s)$ all have a simple zero at $s=1$. The local factors $Z_{l(\gamma_{j}(t))}(s)$ are holomorphic and non-vanishing at $s=1$. Thus we compute
\begin{displaymath}
	\begin{split}
		&Q_{t}(1)=\frac{Z^{\prime}(\fZ_{t},1)}{\prod_{j=1}^{n}Z_{l(\gamma_{j}(t))}(1)\lambda_{j}(t)},\\
		&H(1)=Z^{\prime}(X^{\circ},1)\prod_{j=1}^{n}Z^{\prime}(T_{j}^{\circ},1).
	\end{split}
\end{displaymath}
Furthermore the following asymptotic estimate evaluated at $s=1$ holds \cite[Lemma 39]{Schulze}: for $\Real s>0$ we have
\begin{displaymath}
	\lim_{l\to 0}\Gamma(s)^{2}Z_{l}(s)\exp(\pi^{2}/3l)l^{2s-1}=2\pi.
\end{displaymath}
The proof follows from these considerations.
\end{proof}
\subsection{Asymptotics of small eigenvalues and proof of Theorem \ref{theorem:Wolpert-Burger}}
The notations of Section \ref{subsection:degeneracy_selberg} are still in force.
\begin{theorem}\label{theorem:Burger}
The product of the eigenvalues $\lambda_{1}(t),\ldots,\lambda_{n}(t)$ satisfies
\begin{equation}\label{equation:14}
	\lim_{t\to 0}\prod_{j=1}^{n}\frac{\lambda_{j}(t)}{l(\gamma_{j}(t))}=\frac{1}{(2\pi^{2})^{n}}\left(\frac{n}{2g-2+n}+1\right).
\end{equation}
\end{theorem}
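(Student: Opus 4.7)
The plan is to reduce the statement to a finite--dimensional linear-algebra problem on the dual graph of the nodal limit $Y$, by applying Burger's theorem on small eigenvalues of pinching hyperbolic surfaces.

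First I set up the dual graph. Since $Y=(X\sqcup T_{1}\sqcup\ldots\sqcup T_{n})/(a_{j}\sim b_{j})$, this graph $G$ is a star with one central vertex $v_{0}$ (corresponding to $X^{\circ}$) and $n$ leaves $v_{1},\ldots,v_{n}$ (corresponding to the $T_{j}^{\circ}$), joined by edges $e_{1},\ldots,e_{n}$ indexed by the vanishing geodesics $\gamma_{j}(t)$. The vertex weights are the hyperbolic areas of the components, namely $A_{0}=2\pi(2g-2+n)$ and $A_{j}=2\pi$ for $j=1,\ldots,n$, while the edge $e_{j}$ is to be weighted by $c_{j}(t)=l(\gamma_{j}(t))/\pi$, the normalization dictated by the asymptotic analysis of the Laplacian on a pinching collar in \cite{Burger}.

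Next I invoke Burger's theorem \cite[Th.~1.1]{Burger}: as $t\to 0$ the $n$ small eigenvalues $\lambda_{1}(t),\ldots,\lambda_{n}(t)$ of $\Delta_{t}$ and the $n$ nonzero generalized eigenvalues $\mu_{1}(t)\leq\ldots\leq\mu_{n}(t)$ of the problem $L(t)f=\mu Mf$ on $G$ (with $L(t)$ the weighted combinatorial Laplacian of $G$ and $M=\mathrm{diag}(A_{0},\ldots,A_{n})$ the mass matrix) satisfy $\lambda_{k}(t)/\mu_{k}(t)\to 1$. Hence it suffices to evaluate $\lim_{t\to 0}\prod_{j}\mu_{j}(t)/l(\gamma_{j}(t))$. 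I would then expand $\det(L-\mu M)$ in powers of $\mu$; using that every principal cofactor of the Laplacian of a connected weighted graph equals the weighted spanning-tree count $\tau(G,c)=\sum_{T}\prod_{e\in T}c_{e}$, I obtain the identity
\[
	\prod_{j=1}^{n}\mu_{j}(t)=\frac{\sum_{i=0}^{n}A_{i}}{\prod_{i=0}^{n}A_{i}}\,\tau(G,c(t)).
\]
Since $G$ is itself a tree, its unique spanning tree is $G$ and $\tau(G,c(t))=\prod_{j}c_{j}(t)=\pi^{-n}\prod_{j}l(\gamma_{j}(t))$. Substituting $\sum_{i}A_{i}=4\pi(g+n-1)$ and $\prod_{i}A_{i}=(2\pi)^{n+1}(2g-2+n)$ and using the algebraic identity $2(g+n-1)/(2g-2+n)=1+n/(2g-2+n)$ will yield the constant $\frac{1}{(2\pi^{2})^{n}}(\frac{n}{2g-2+n}+1)$ of (\ref{equation:14}).

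The main difficulty lies in the precise calibration of Burger's theorem, especially the factor $1/\pi$ in the edge weight $c_{j}(t)$: it is this factor that produces the $\pi^{2n}$ (rather than $\pi^{n}$) in the denominator of the final constant $(2\pi^{2})^{n}$, and it must be extracted from the asymptotic behaviour of eigenfunctions across a pinching collar treated in \cite{Burger}. Once this normalization is granted, the spectral identity reduces to the elementary matrix-tree computation on the star, and the formula drops out.
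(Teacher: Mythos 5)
Your proposal is correct and takes a genuinely different route from the paper's. The paper works with the symmetric operator $M_t$ whose matrix $A(t)$ uses masses $(2g-2+n,1,\ldots,1)$ and raw edge weights $l(\gamma_j(t))$, states Burger's theorem with the explicit limit ratio $1/(2\pi^2)$, and then establishes the analogue of your spanning-tree identity indirectly: it introduces the "equal-length" matrix $B(t)$, diagonalizes it by hand, and uses Wolpert's estimate $l(\gamma_j(t))=l(t)+O((\log|t|)^{-4})$ to compare characteristic polynomials and pass to the limit. You instead absorb the $1/(2\pi^2)$ into the normalization (hyperbolic areas as vertex masses, $l(\gamma_j)/\pi$ as edge weights, so Burger's ratio becomes $1$) and then observe that for the star graph the quantity $\prod_j\mu_j(t)/\prod_j l(\gamma_j(t))$ is an exact $t$-independent constant: by the weighted matrix-tree theorem $\prod_j\mu_j=\bigl(\sum_i A_i/\prod_i A_i\bigr)\tau(G,c)$, and since $G$ is a tree $\tau(G,c)=\prod_j c_j=\pi^{-n}\prod_j l(\gamma_j)$, which gives $\frac{1}{(2\pi^2)^n}\bigl(1+\frac{n}{2g-2+n}\bigr)$ directly. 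This is cleaner than the paper's argument: it shows the limit is actually an identity and dispenses with Wolpert's length estimate entirely for this step. The one point you flag but do not fully pin down, the calibration of Burger's theorem under your normalization, is in fact forced by the version the paper quotes: multiplying the paper's masses by $2\pi$ and dividing its edge weights by $\pi$ rescales the generalized eigenvalues by exactly $1/(2\pi^2)$, which is the constant on the right-hand side of the paper's Theorem (Burger); so your normalization is the correct one, and it would be worth writing out that two-line rescaling explicitly rather than leaving it as a difficulty.
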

This result actually appears as a particular instance of \cite[Th. 1.1]{Burger} (see Theorem \ref{theorem:Burger2} below), that the author learned from \cite[Sec. 4]{Ji}. We shall detail how to apply loc. cit. to derive Theorem \ref{theorem:Burger}.

The first step is to attach a \textit{weighted graph} to every fiber $\fZ_{t}$, $t\neq 0$. Let $G$ be the graph consisting of:
\begin{itemize}
	\item[--] $n+1$ vertices, $V=\lbrace v_{0},\ldots v_{n}\rbrace$;
	\item[--] $n$ edges, $E=\lbrace e_{1},\ldots e_{n}\rbrace$. The edge $e_{i}$ links the vertices $v_{0}$ and $v_{i}$.
\end{itemize}
Therefore the shape of $G$ is a $n$-edged star. For every $t\in D$, $t\neq 0$, we attach to $\fZ_{t}$ two functions $m_{t}$ and $l_{t}$:
\begin{displaymath}
	\begin{split}
		m_{t}: V&\longrightarrow\RR\\
			v_{0}&\longmapsto 2g-2+n,\\
			v_{j}&\longmapsto 1, j=1,\ldots,n
	\end{split}
\end{displaymath}
and
\begin{displaymath}
	\begin{split}
		l_{t}: E&\longmapsto\RR\\
		e_{j}&\longmapsto l(\gamma_{j}(t)).
	\end{split}
\end{displaymath}
Define $\RR[V]:=\bigoplus_{j=0}^{n}\RR v_{j}$ and $\RR[V]^{\ast}$ the vector space of linear functionals on $\RR[V]$. For every $F\in\RR[V]^{\ast}$ we put
\begin{displaymath}
	Q_{t}F=\sum_{j=1}^{n}(F(v_{j})-F(v_{0}))^{2}l_{t}(e_j).
\end{displaymath}
We also introduce the scalar product on $\RR[V]^{\ast}$
\begin{displaymath}
	\langle F_{1}, F_{2}\rangle_{t}=\sum_{j=0}^{n}m_{t}(v_{j})F_{1}(v_{j})F_{2}(v_{j}).
\end{displaymath}
There exists a unique symmetric operator $M_{t}\in\text{End}_{\RR}(\RR[V]^{\ast})$ such that
\begin{displaymath}
	\langle M_{t}F,F\rangle_{t}=Q_{t}F,\,\,\,F\in\RR[V]^{\ast}.
\end{displaymath}
The matrix of $M_{t}$ in the base $v_{0}^{\ast},\ldots,v_{n}^{\ast}$ dual to $v_{0},\ldots,v_{n}$ is
\begin{displaymath}
A(t):=\left(\begin{array}{ccccc}
	(l_{1}(t)+\ldots+l_{n}(t))/\alpha	&-l_{1}(t)/\alpha	&\cdots	&\cdots	&-l_{n}(t)/\alpha\\
	-l_{1}(t)	&l_{1}(t)	&0	&\cdots	&0\\
	\vdots	&0	&\ddots	&	&0\\
	\vdots	&\vdots	&	&\ddots	&\vdots\\
	-l_{n}(t)	&0	&0	&\cdots	&l_{n}(t)
	\end{array}\right),
\end{displaymath}
where $l_{j}(t)=l(\gamma_{j}(t))$, $\alpha=2g-2+n$. It is easily seen that $0$ is a simple eigenvalue of $A(t)$. Let $\mu_{1}(t)\leq\ldots\leq\mu_{n}(t)$ be the strictly positive eigenvalues of $M_{t}$, counted with multiplicity.
\begin{theorem}[Burger]\label{theorem:Burger2}
For every $j=1,\ldots,n$ we have
\begin{displaymath}
	\lim_{t\to 0}\frac{\lambda_{j}(t)}{\mu_{j}(t)}=\frac{1}{2\pi^{2}}.
\end{displaymath}
\end{theorem}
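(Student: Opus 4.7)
I follow Burger's min-max approach, comparing the eigenvalues of the hyperbolic Laplacian $\Delta_t$ on $\fZ_t$ with those of the combinatorial operator $M_t$ through carefully chosen test functions. For $|t|$ sufficiently small, $\fZ_t$ decomposes into $n+1$ \emph{thick pieces} $T_k(t)$ (indexed by the vertices $v_k\in V$, converging as $t\to 0$ to the limit components $X^{\circ}$ and $T_j^{\circ}$ with a small cusp neighborhood removed around each puncture) and $n$ \emph{thin collars} $C_j(t)$ around the pinching geodesics $\gamma_j(t)$ of length $l_j(t):=l(\gamma_j(t))$. The Gauss--Bonnet formula gives $\mathrm{area}(T_k(t))\to 2\pi\,m_t(v_k)$ as $t\to 0$.

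\textbf{Upper bound.} By the min-max principle,
\[
\lambda_j(t)=\min_{\substack{V\subset H^1(\fZ_t)\\ \dim V=j+1}}\,\max_{f\in V\setminus\{0\}}\,\frac{\int_{\fZ_t}|df|^2}{\int_{\fZ_t}|f|^2}.
\]
For each $F\in\RR[V]^*$ I build a test function $\phi_F$ on $\fZ_t$: set $\phi_F\equiv F(v_k)$ on $T_k(t)$, and on $C_j(t)$ take the harmonic function depending only on the transverse coordinate and interpolating between $F(v_0)$ and $F(v_j)$. A hyperbolic collar around a geodesic of length $l$ is conformally an annulus of modulus $\pi/l+O(1)$, hence has Dirichlet capacity $l/\pi+O(l^2)$. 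Therefore
\[
\int_{\fZ_t}|d\phi_F|^2=\frac{1}{\pi}\,Q_tF+o(1)\,\|F\|_t^2,\qquad \int_{\fZ_t}|\phi_F|^2=2\pi\,\langle F,F\rangle_t+o(1)\,\|F\|_t^2.
\]
Applying this to the span of the eigenvectors of $M_t$ for the eigenvalues $0,\mu_1(t),\dots,\mu_j(t)$ yields $\lambda_j(t)\leq\mu_j(t)/(2\pi^2)+o(\mu_j(t))$.

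\textbf{Lower bound.} Let $\phi_0,\dots,\phi_j$ be $L^2$-orthonormal eigenfunctions of $\Delta_t$ for $\lambda_0(t),\dots,\lambda_j(t)$, and define a linear map $A$ from their span into $\RR[V]^*$ by
\[
(A\phi)(v_k):=\frac{1}{\mathrm{area}(T_k(t))}\int_{T_k(t)}\phi.
\]
The crucial asymptotic estimates I would establish are
\[
\|A\phi\|_t^2=\frac{1}{2\pi}\int_{\fZ_t}|\phi|^2\,(1+o(1)),\qquad Q_t(A\phi)\leq\pi\int_{\fZ_t}|d\phi|^2\,(1+o(1)).
\]
They imply that $A$ is injective for $t$ close to $0$, and the min-max for $M_t$ applied to the image then gives $\mu_j(t)\leq 2\pi^2\,\lambda_j(t)\,(1+o(1))$, completing the proof.

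\textbf{Main obstacle.} The hard part is the pair of estimates for $A\phi$ in the lower bound. The $L^2$ comparison requires eigenfunctions with $\lambda_j(t)\to 0$ to be asymptotically constant on each thick piece; I would deduce this from a uniform Neumann spectral gap on $T_k(t)$, using Wolpert's pinching expansion (Section~\ref{section:analytic1}) which forces smooth convergence of the family hyperbolic metric on the thick parts, so that compactness provides a lower bound on the first positive Neumann eigenvalue independent of $t$. The $Q_t$ comparison uses a Dirichlet principle on each collar: the radial harmonic replacement matching the average boundary traces of $\phi$ minimizes the collar Dirichlet energy, and the deviation of $\phi$ from this interpolant is controlled by the uniform lower bound on the first positive Neumann eigenvalue of a thin hyperbolic collar, which is $O(1)$ and hence negligible in the ratio relative to $\lambda_j(t)\to 0$. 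Finally, Theorem~\ref{theorem:Wolpert2}(i) guarantees that exactly $n$ eigenvalues of $\Delta_t$ tend to $0$; these match the $n$ positive eigenvalues of $M_t$, making the min-max comparison sharp for every $j=1,\dots,n$.
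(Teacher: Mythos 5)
The paper does not prove this statement at all: it is quoted directly from Burger \cite[Th.\ 1.1 and App.\ 2]{Burger}, so there is no internal argument to compare against. Your sketch correctly reconstructs the strategy of the cited proof --- the thick--thin decomposition, the collar capacity $l/\pi+O(l^{2})$, the two transfer maps between eigenspaces of $\Delta_{t}$ and of $M_{t}$, and the resulting constant $1/(2\pi^{2})$ all match Burger's method --- with the two asymptotic estimates you defer in the lower bound (near-constancy of small-eigenvalue eigenfunctions on the thick pieces via a uniform Neumann gap, and the Dirichlet comparison on the collars) being precisely the technical heart of the cited paper rather than gaps in your outline.
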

\begin{proof}
See \cite[Th. 1.1 and App. 2]{Burger}.
\end{proof}
Therefore, to prove Theorem \ref{theorem:Burger}, it suffices to establish the next proposition.
\begin{proposition}\label{proposition:Burger}
For the product of the eigenvalues $\mu_{1}(t),\ldots,\mu_{n}(t)$ we have
\begin{displaymath}
	\lim_{t\to 0}\prod_{j=1}^{n}\frac{\mu_{j}(t)}{l(\gamma_{j}(t))}=\frac{n}{2g-2+n}+1.
\end{displaymath}
\end{proposition}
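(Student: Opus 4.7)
The plan is to compute the characteristic polynomial of $A(t)$ explicitly by exploiting its block structure, and read off the product $\prod_j \mu_j(t)$ from the coefficient of $\lambda$. In fact we expect to obtain an exact identity
$$\prod_{j=1}^{n}\mu_{j}(t) = \left(1+\frac{n}{2g-2+n}\right)\prod_{j=1}^{n}l(\gamma_{j}(t)),$$
so the limit in the statement is trivially achieved (it holds for every $t$).

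First I would verify that $(1,1,\ldots,1)^{T}$ lies in the kernel of $A(t)$, which shows that $0$ is indeed an eigenvalue and that the characteristic polynomial factors as $\det(\lambda I-A(t))=\lambda\prod_{j=1}^{n}(\lambda-\mu_{j}(t))$. In particular, $\prod_{j}\mu_{j}(t)$ can be recovered as $(-1)^{n}$ times the value at $\lambda=0$ of the polynomial $\det(\lambda I-A(t))/\lambda$.

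Next I would write $A(t)$ in the block form
$$A(t)=\begin{pmatrix} s/\alpha & -w^{T}/\alpha \\ -w & L\end{pmatrix},$$
with $l_{j}=l(\gamma_{j}(t))$, $s=l_{1}+\ldots+l_{n}$, $w=(l_{1},\ldots,l_{n})^{T}$, $L=\operatorname{diag}(l_{1},\ldots,l_{n})$, $\alpha=2g-2+n$. Applying the Schur complement formula with respect to the invertible block $\lambda I-L$ (initially for $\lambda\neq l_{j}$, then extending by polynomiality), I compute
$$\det(\lambda I-A(t))=\prod_{j=1}^{n}(\lambda-l_{j})\Bigl(\lambda-\frac{s}{\alpha}-\frac{1}{\alpha}\sum_{j=1}^{n}\frac{l_{j}^{2}}{\lambda-l_{j}}\Bigr).$$
Using the elementary identity $l_{j}^{2}/(\lambda-l_{j})=-l_{j}+\lambda l_{j}/(\lambda-l_{j})$, the bracket simplifies to $\lambda\bigl(1-\frac{1}{\alpha}\sum_{j}l_{j}/(\lambda-l_{j})\bigr)$, so
$$\frac{\det(\lambda I-A(t))}{\lambda}=\prod_{j=1}^{n}(\lambda-l_{j})-\frac{1}{\alpha}\sum_{k=1}^{n}l_{k}\prod_{j\neq k}(\lambda-l_{j}).$$

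Finally, I evaluate this at $\lambda=0$. The first term yields $(-1)^{n}\prod_{j}l_{j}$; the second term yields $(-1)^{n}\cdot(n/\alpha)\prod_{j}l_{j}$, since $\sum_{k}l_{k}\prod_{j\neq k}l_{j}=n\prod_{j}l_{j}$. Equating with $(-1)^{n}\prod_{j}\mu_{j}(t)$ gives the asserted exact identity, which is stronger than the limit in the statement. There is no real obstacle here; the proof is a routine, though slightly clever, computation, the only subtlety being the correct use of the Schur complement and the telescoping simplification that reveals the extra factor of $\lambda$ encoding the known kernel direction.
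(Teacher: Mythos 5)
Your computation is correct and in fact gives a strictly stronger result than the proposition asks for: you obtain the exact identity
\begin{displaymath}
	\prod_{j=1}^{n}\mu_{j}(t)=\Bigl(1+\frac{n}{2g-2+n}\Bigr)\prod_{j=1}^{n}l(\gamma_{j}(t))
\end{displaymath}
for every $t$, so the limit is immediate. I checked the Schur-complement formula, the partial-fraction simplification, the extraction of the factor $\lambda$, and the evaluation at $\lambda=0$; all are right, and the verification that $(1,\ldots,1)^{T}$ kills $A(t)$ is also correct. The paper takes a different route: it invokes Wolpert's estimate $l(\gamma_{j}(t))=l(t)+O((\log|t|)^{-4})$ with $l(t)=2\pi^{2}/\log|t|^{-1}$, replaces $A(t)$ by the matrix $B(t)$ obtained by setting all $l_{j}(t)$ equal to $l(t)$, explicitly diagonalizes $B(t)$ (its nonzero eigenvalues are $l(t)$ with multiplicity $n-1$ and $(1+n/\alpha)l(t)$ with multiplicity $1$), and then compares characteristic polynomials to pass to the limit. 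The paper's approach has the collateral benefit of locating each individual eigenvalue $\mu_{j}(t)$ asymptotically, which is consonant with the per-eigenvalue statement of Theorem~\ref{theorem:Burger2}; but for the product alone your argument is shorter, more elementary, and sharper, since it requires neither Wolpert's estimate nor any passage to the limit. One small point worth making explicit: to conclude $\det(\lambda I-A(t))/\lambda=\prod_{j}(\lambda-\mu_{j}(t))$ you need $0$ to be a \emph{simple} eigenvalue (which the paper asserts without proof); your kernel computation only shows $0$ is \emph{an} eigenvalue. This is easy to repair — the quadratic form $Q_{t}$ is positive semidefinite with $Q_{t}F=0$ iff $F$ is constant (since all $l_{t}(e_{j})>0$), so the kernel is exactly the constants — but you should say it.
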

\begin{proof}
Introduce the function $l(t)=2\pi^{2}/\log|t|^{-1}$, $t\in D$, $t\neq 0$. By \cite[Ex. 4.3, p. 446]{Wolpert:hyperbolic} there is an estimate
\begin{equation}\label{equation:15}
	l(\gamma_{j}(t))=l(t)+O\left(\frac{1}{(\log|t|)^{4}}\right).
\end{equation}
Define the matrix
\begin{displaymath}
B(t):=\left(\begin{array}{ccccc}
	nl(t)/\alpha	&-l(t)/\alpha	&\cdots	&\cdots	&-l(t)/\alpha\\
	-l(t)	&l(t)	&0	&\cdots	&0\\
	\vdots	&0	&\ddots	&	&0\\
	\vdots	&\vdots	&	&\ddots	&\vdots\\
	-l(t)	&0	&0	&\cdots	&l(t)
	\end{array}\right).
\end{displaymath}
Let $\nu_{0}(t)=0<\nu_{1}(t)\leq\ldots\leq\nu_{n}(t)$ be the eigenvalues of $B(t)$. They are easily computed to be
\begin{displaymath}
	\nu_{1}(t)=\ldots=\nu_{n-1}(t)=l(t)<\nu_{n}(t)=\left(\frac{n}{2g-2+n}+1\right)l(t).
\end{displaymath}
By comparison of the characteristic polynomials of $A(t)$, $B(t)$ and the estimate (\ref{equation:15}) we arrive to
\begin{displaymath}
	\lim_{t\to 0}\prod_{j=1}^{n}\frac{\mu_{j}(t)}{l(\gamma_{j}(t))}=\lim_{t\to 0}\prod_{j=1}^{n}\frac{\nu_{j}(t)}{l(t)}=\frac{n}{2g-2+n}+1.
\end{displaymath}
This completes the proof of the proposition.
\end{proof}
\begin{proof}[Proof of Theorem \ref{theorem:Burger}]
The theorem is a straightforward consequence of Theorem \ref{theorem:Burger2} and Proposition \ref{proposition:Burger}.
\end{proof}
We are now in position to prove Theorem \ref{theorem:Wolpert-Burger}.
\begin{proof}[Proof of Theorem \ref{theorem:Wolpert-Burger}]
The limit property (\ref{equation:12}) is a conjunction of (\ref{equation:13}) (Theorem \ref{theorem:Wolpert3}), (\ref{equation:14}) (Theorem \ref{theorem:Burger}) and Wolpert's estimate (\ref{equation:15}) for the length $l(\gamma_{j}(t))$.
\end{proof}
\subsection{Consequences: degeneracy of the Quillen metric}\label{subsection:degeneracy_Quillen}
We apply Theorem \ref{theorem:Wolpert-Burger} to study the behavior of the Quillen metric $\|\cdot\|_{Q}$ on $\lambda_{g+n,0}$, near $\pd\SM_{g,0}^{\an}$.

We maintain the notations of Section \ref{subsection:statement_theorem_selberg}. For the clutching morphism $\gamma:\SCM_{g,n}\times\SCM_{1,1}^{\times n}\rightarrow\SCM_{g+n,0}$, we have the isomorphism (\ref{equation:3bis})
\begin{displaymath}
	\Phi:\gamma^{\ast}\lambda_{g+n,0}\overset{\sim}{\longrightarrow}\lambda_{g,n}\boxtimes\lambda_{1,1}^{\boxtimes n}.
\end{displaymath}
 At the point $N=(P,Q_{1},\ldots,Q_{n})$, $\Phi$ induces a natural isomorphism
 \begin{displaymath}
 	\Phi_{N}:\lambda(\omega_{\fZ_{0}})\overset{\sim}{\longrightarrow}\lambda(\omega_{X})\otimes\bigotimes_{j=1}^{n}\lambda(\omega_{T_j}).
 \end{displaymath}
 Observe that $\lambda(\omega_{\fZ_{0}})=\lambda(\omega_{Y})$ is the stalk at $0\in D$ of $\lambda(\omega_{\fZ/D})$.
 \begin{proposition}\label{proposition:L2_metric}
 i. The $L^{2}$ metric $\|\cdot\|_{L^2}$ on $\lambda(\omega_{\fZ/D})\mid_{D\setminus\lbrace 0\rbrace}$ extends continuously to $\lambda(\omega_{\fZ/D})$.\\
 ii. Let $\mu=n/(2g-2+n)+1$. Then $\Phi_{N}$ induces an isometry
 \begin{displaymath}
 	\begin{split}
	\overline{\Phi}_{N}:(\lambda(\omega_{\fZ/D}),&\|\cdot\|_{L^2})\mid_{0}\overset{\sim}{\longrightarrow}\\
	&(\lambda(\omega_{X}),\|\cdot\|_{L^2})
	\otimes\bigotimes_{j=1}^{n}(\lambda(\omega_{T_j}),\|\cdot\|_{L^{2}})\otimes\OO(\gamma^{1/2}).
	\end{split}
 \end{displaymath}
 \end{proposition}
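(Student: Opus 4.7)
\emph{Plan.} My strategy is to work in a local frame of $\fg_{\ast}\omega_{\fZ/D}$ around $0\in D$, express the squared $L^{2}$ norm on $\lambda(\omega_{\fZ/D})$ as an explicit product of a Gram determinant and a total volume, and control both factors across the nodal degeneration by using the plumbing description of $\fZ$ near the nodes of $\fZ_{0}$.

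\textbf{Setup.} Because $\fg$ is flat with $h^{1}(\omega_{\fZ_{t}})=1$ on every fibre, $\fg_{\ast}\omega_{\fZ/D}$ is locally free of rank $g+n$. Pick a local trivialization $s_{1}(t),\ldots,s_{g+n}(t)$ near $0\in D$ and set $\xi(t)=s_{1}(t)\wedge\cdots\wedge s_{g+n}(t)\otimes\mathbf{1}^{\vee}$; then
\begin{displaymath}
\|\xi(t)\|_{L^{2}}^{2}=\det G(t)\cdot v(t),\quad G(t)=(\langle s_{i}(t),s_{j}(t)\rangle_{0}),\quad v(t)=\langle\mathbf{1},\mathbf{1}\rangle_{1}.
\end{displaymath}
By Gauss--Bonnet $v(t)\equiv 2(g+n)-2$ is \emph{constant}: for $t\neq 0$ it is $2p_{a}-2$, and for $t=0$ the volume splits as $(2g-2+n)+n\cdot 1$, yielding the same number. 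Next, since each $T_{j}$ has genus one with a single puncture $b_{j}$, the residue theorem forces every section of $\omega_{T_{j}}(b_{j})$ to have residue zero at $b_{j}$; combined with the node-matching conditions on $Y$ this produces the canonical splitting
\begin{displaymath}
H^{0}(Y,\omega_{Y})=H^{0}(X,\omega_{X})\oplus\bigoplus_{j=1}^{n}H^{0}(T_{j},\omega_{T_{j}}),
\end{displaymath}
and the limit frame $s_{i}(0)$ can be chosen adapted to it (hence holomorphic on the normalization, with no residue at any node).

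\textbf{Continuity of the Gram matrix.} In the plumbing coordinates of Construction \ref{construction:deformation}, $\fZ$ is smooth, each node of $\fZ_{0}$ is locally $uv=t$, and $\omega_{\fZ/D}$ is generated by $du/u=-dv/v$. A local section is $g(u,v)\,du/u$ with $g$ holomorphic; its residue at the node is $g(0,0)$, and the vanishing of residues just established forces $g(0,0)=0$, hence $g=u\,h_{1}(u,v)+v\,h_{2}(u,v)$ and
\begin{displaymath}
s=h_{1}\,du-h_{2}\,dv.
\end{displaymath}
Restricted to $\fZ_{t}$ on the collar, $s=[h_{1}(u,t/u)+(t/u^{2})h_{2}(u,t/u)]\,du$. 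Split the collar into $\{|u|\geq|t|^{1/2}\}$ and $\{|v|\geq|t|^{1/2}\}$. On the first, $(t/u^{2})h_{2}$ is uniformly bounded in $t$ and tends pointwise to $0$, whereas $h_{1}(u,t/u)\to h_{1}(u,0)$; Lebesgue's theorem yields
\begin{displaymath}
\tfrac{i}{2\pi}\int_{|u|\geq|t|^{1/2}}|h_{1}+(t/u^{2})h_{2}|^{2}\,du\wedge d\bar{u}\longrightarrow\tfrac{i}{2\pi}\int_{|u|\leq c}|h_{1}(u,0)|^{2}\,du\wedge d\bar{u},
\end{displaymath}
which is precisely the contribution of the node to the $L^{2}$ pairing on $X$. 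The symmetric argument in the $v$-chart yields the analogous contribution on $T_{j}$. On the thick complement of the collars the family is $\mathcal{C}^{\infty}$-trivial and dominated convergence applies directly. Summing, $G_{ij}(t)\to G_{ij}(0)$, whence (i).

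\textbf{The $\mu^{1/2}$ factor and main obstacle.} With the frame adapted to the splitting at $t=0$, $G(0)$ is block diagonal, so $\det G(0)=\det G_{X}\cdot\prod_{j}\det G_{T_{j}}$. On the target side,
\begin{displaymath}
\|\overline{\Phi}_{N}(\xi(0))\|^{2}_{L^{2}}=\det G_{X}\cdot v_{X}\cdot\prod_{j=1}^{n}\det G_{T_{j}}\cdot v_{T_{j}},\quad v_{X}=2g-2+n,\ v_{T_{j}}=1,
\end{displaymath}
while $\|\xi(0)\|_{L^{2}}^{2}=\det G(0)\cdot(2g+2n-2)$. The ratio is $(2g+2n-2)/(2g-2+n)=\mu$, which is exactly the scaling absorbed by the twist $\OO(\mu^{1/2})$ in the statement (I read the symbol $\gamma^{1/2}$ there as $\mu^{1/2}$, the constant just defined). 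This proves (ii). The one delicate point of the whole argument is the vanishing $g(0,0)=0$ on every collar: without it the integrand would behave like $|du/u|^{2}$ and the $L^{2}$ norm would diverge like $\log|t|^{-1}$. This vanishing is a \emph{global} consequence of the peculiar shape of $Y$---the rigidity of the genus-one ends $T_{j}$---rather than a local matter; once secured, everything else reduces to Gauss--Bonnet and dominated convergence.
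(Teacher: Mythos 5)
Your argument is correct and follows the same strategy as the paper: factor the determinant metric on $\lambda(\omega_{\fZ/D})$ into a Gram determinant on $\det R^{0}\fg_{\ast}\omega_{\fZ/D}$ times the hyperbolic volume, observe that the volume ratio $\frac{2g+2n-2}{(2g-2+n)\cdot 1^{n}}$ accounts for $\mu$, and prove continuity of the Gram matrix by choosing a frame whose value at $t=0$ is adapted to the splitting $H^{0}(Y,\omega_{Y})\cong H^{0}(X,\omega_{X})\oplus\bigoplus_{j}H^{0}(T_{j},\omega_{T_{j}})$. The paper leaves the continuity of the Gram determinant as ``an easy local computation''; you have correctly supplied the omitted plumbing-coordinate analysis, including the key structural point that the genus-one ends force all residues of sections of $H^{0}(Y,\omega_{Y})$ to vanish (so the local generator $g\,du/u$ has $g(0,0)=0$, averting the $\log|t|^{-1}$ blow-up), and your reading of the typo $\gamma^{1/2}$ as $\mu^{1/2}$ is the intended one.
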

 \begin{proof}[Proof (see also \cite{BB}, Prop. 13.5, case (II), pp. 96--98)]
 We shall prove the two assertions simultaneously. Write $D^{\ast}=D\setminus\lbrace 0\rbrace$. First of all we have isometries
 \begin{displaymath}
 	\begin{split}
		&(\lambda(\omega_{\fZ/D})\mid_{D^{\ast}},\|\cdot\|_{L^2})\overset{\sim}{\rightarrow}(\det R^{0}\fg_{\ast}\omega_{\fZ/D}\mid_{D^{\ast}},V_{g+n,0}^{1/2}\cdot\|\cdot\|_{0}),\\
		&(\lambda(\omega_{X}),\|\cdot\|_{L^2})\overset{\sim}{\rightarrow}(\det H^{0}(X,\omega_X),V_{g,n}^{1/2}\cdot\|\cdot\|_{0}),\\
		&(\lambda(\omega_{T_j}),\|\cdot\|_{L^2})\overset{\sim}{\rightarrow}(\det H^{0}(T_{j},\omega_{T_j}), V_{1,1}^{1/2}\cdot\|\cdot\|_{0}),
	\end{split}
 \end{displaymath}
 where $\|\cdot\|_{0}$ is the $L^{2}$ metric on the line $\det R^{0}\fg_{\ast}\omega_{\fZ/D}\mid_{D^{\ast}}$ (respectively $\det H^{0}(X,\omega_{X})$, $\det H^{0}(T_{j},\omega_{T_j})$) and $V_{g+n,0}=2g+2n-2$, $V_{g,n}=2g-2+n$, $V_{1,1}=1$. Since $\mu=V_{g+n,0}/V_{g,n}$, it suffices to prove that the $L^{2}$ metric on $\det R^{0}\fg_{\ast}\omega_{\fZ/D}\mid_{D^{\ast}}$ extends continuously at $0$ and $\Phi_{N}$ induces an isometry
 \begin{displaymath}
 	\begin{split}
	(\det R^{0}\fg_{\ast}\omega_{\fZ/D},&\|\cdot\|_{0})\mid_{0}\overset{\sim}{\longrightarrow}\\
	&(\det H^{0}(X,\omega_{X}),\|\cdot\|_{0})
	\otimes\bigotimes_{j=1}^{n}(\det H^{0}(T_{j},\omega_{T_{j}}),\|\cdot\|_{0}).
	\end{split}
 \end{displaymath}
Let $\alpha_{1},\ldots,\alpha_{g}$ be a basis of $H^{0}(X,\omega_{X})$ and $\beta_{j}$ a basis of $H^{0}(T_{j},\omega_{T_j})$, $j=1,\ldots,n$. The differential forms $\alpha_{i}$, $\beta_{j}$ satisfy the residue conditions $\Res_{p_k}\alpha_{i}=0$, $\Res_{q_{j}}\beta_{j}=0$. Therefore there exist global sections $\widetilde{\alpha}_{i}$ and $\widetilde{\beta}_{j}$ of $\omega_{Y}$ extending $\alpha_{i}$, $\beta_{j}$ by $0$, respectively. The sections $\widetilde{\alpha}_{i}$, $i=1,\ldots,g$, $\widetilde{\beta}_{j}$, $j=1,\ldots,n$ form a basis of $H^{0}(Y,\omega_{Y})$. Besides, $R^{0}\fg_{\ast}\omega_{\fZ/D}$ is locally free of rank $g+n$. After possibly shrinking $D$, we can find a frame $\widetilde{\alpha}_{i}(t)$, $i=1,\ldots,g$, $\widetilde{\beta}_{j}(t)$, $j=1,\ldots,n$ of $R^{0}\fg_{\ast}\omega_{\fZ/D}$ over $D$, such that $\widetilde{\alpha}_{i}(0)=\widetilde{\alpha}_{i}$ and $\widetilde{\beta}_{j}(0)=\widetilde{\beta}_{j}$. Write $\lbrace\theta_{i}(t)\rbrace_{i=1}^{g+n}$ for the whole ordered set $\lbrace\widetilde{\alpha}_{i}(t)\rbrace_{i=1}^{g}\cup\lbrace\widetilde{\beta}_{j}(t)\rbrace_{j=1}^{n}$. An easy local computation shows that the function
 \begin{displaymath}
 	t\mapsto\det\left(\frac{i}{2\pi}\int_{\fZ_{t}}\theta_{j}\wedge\overline{\theta_{k}}\right)_{1\leq j\leq k\leq g+n},\,\,\,t\neq 0
 \end{displaymath}
 extends continuously to $0$, with value
 \begin{displaymath}
 	\det\left(\frac{i}{2\pi}\int_{X}\alpha_{j}\wedge\overline{\alpha_{k}}\right)_{1\leq j,k\leq g}
	\prod_{j=1}^{n}\frac{i}{2\pi}\int_{T_{j}}\beta_{j}\wedge\overline{\beta_{j}}.
 \end{displaymath}
 Notice that
 \begin{displaymath}
	\Phi_{N}:(\theta_{1}\wedge\ldots\wedge\theta_{g+n})(0)\mapsto\pm(\alpha_{1}\wedge\ldots\wedge\alpha_{g})\otimes\beta_{1}\otimes\ldots\otimes\beta_{n}.
 \end{displaymath}
The proposition follows from these observations.
 \end{proof}
 Attached to the one-parameter deformation $\fg:\fZ\rightarrow D$, there is a classifying map
 \begin{displaymath}
 	\C(\fg):D\longrightarrow\SCM_{g+n,0}^{\an}.
\end{displaymath}
The line bundle $\lambda(\omega_{\fZ/D})=\C(\fg)^{\ast}\lambda_{g+n,0}^{\an}$ is endowed with the Quillen metric $\|\cdot\|_{Q}$.
\begin{corollary}\label{corollary:L2_metric}
Let $\widetilde{\alpha}_{1},\ldots,\widetilde{\alpha}_{g}$, $\widetilde{\beta}_{1},\ldots,\widetilde{\beta}_{n}$ be a frame of the sheaf $R^{0}\fg_{\ast}\omega_{\fZ/D}$ over $D$, as in the proof of Proposition \ref{proposition:L2_metric}. Then we have
\begin{displaymath}
	\begin{split}
	\lim_{t\to 0}\|\widetilde{\alpha}_{1}\wedge\ldots\wedge\widetilde{\alpha}_{g}
	\wedge&\widetilde{\beta}_{1}\wedge\ldots\wedge\widetilde{\beta}_{n}\|_{Q,t}|t|^{n/12}\\
	&=\|\alpha_{1}\wedge\ldots\wedge\alpha_{g}\|_{Q}\cdot
	\ldots\cdot\|\beta_{1}\|_{Q}\cdot\ldots\cdot\|\beta_{n}\|_{Q}.
	\end{split}
\end{displaymath}
\end{corollary}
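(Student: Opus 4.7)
The plan is to unwind both sides via the definition of the Quillen metric
(Definition \ref{definition:Quillen}), match the $L^{2}$ parts using
Proposition \ref{proposition:L2_metric}, match the Selberg parts using
Theorem \ref{theorem:Wolpert-Burger}, and then verify that the remaining
constants fit together via relation (\ref{equation:E}).

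First, write
\begin{displaymath}
\|\widetilde{\alpha}_{1}\wedge\ldots\wedge\widetilde{\beta}_{n}\|_{Q,t}^{2}
= \bigl(E(g+n,0)\,Z^{\prime}(\fZ_{t},1)\bigr)^{-1}\,
\|\widetilde{\alpha}_{1}\wedge\ldots\wedge\widetilde{\beta}_{n}\|_{L^{2},t}^{2},
\end{displaymath}
and analogously for the right hand side, using
$\|\alpha_{1}\wedge\ldots\wedge\alpha_{g}\|_{Q}^{2}
=(E(g,n)Z^{\prime}(X^{\circ},1))^{-1}\|\alpha_{1}\wedge\ldots\wedge\alpha_{g}\|_{L^{2}}^{2}$
and $\|\beta_{j}\|_{Q}^{2}=(E(1,1)Z^{\prime}(T_{j}^{\circ},1))^{-1}\|\beta_{j}\|_{L^{2}}^{2}$.
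Factor $|t|^{n/6}$ into the Selberg denominator so as to regroup
$|t|^{n/6}\|\cdot\|_{Q,t}^{2}$ as $(E(g+n,0)\cdot|t|^{-n/6}Z^{\prime}(\fZ_{t},1))^{-1}\|\cdot\|_{L^{2},t}^{2}$.

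Next, combine the two limit inputs. By Proposition \ref{proposition:L2_metric}
the isometry $\overline{\Phi}_{N}$ gives, since $\|\cdot\|_{L^{2}}$ scales
$\|\cdot\|_{0}$ by the square root of the hyperbolic volume,
\begin{displaymath}
\lim_{t\to 0}\|\widetilde{\alpha}_{1}\wedge\ldots\wedge\widetilde{\beta}_{n}\|_{L^{2},t}^{2}
=\mu\,\|\alpha_{1}\wedge\ldots\wedge\alpha_{g}\|_{L^{2}}^{2}\prod_{j=1}^{n}\|\beta_{j}\|_{L^{2}}^{2},
\end{displaymath}
with $\mu=n/(2g-2+n)+1$. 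By Theorem \ref{theorem:Wolpert-Burger},
\begin{displaymath}
\lim_{t\to 0}|t|^{-n/6}Z^{\prime}(\fZ_{t},1)
=\pi^{-n}\mu\,Z^{\prime}(X^{\circ},1)\prod_{j=1}^{n}Z^{\prime}(T_{j}^{\circ},1).
\end{displaymath}
The two factors of $\mu$ cancel when one divides, and we obtain
\begin{displaymath}
\lim_{t\to 0}|t|^{n/6}\|\widetilde{\alpha}_{1}\wedge\ldots\wedge\widetilde{\beta}_{n}\|_{Q,t}^{2}
=\frac{\pi^{n}}{E(g+n,0)}\cdot
\frac{\|\alpha_{1}\wedge\ldots\wedge\alpha_{g}\|_{L^{2}}^{2}}{Z^{\prime}(X^{\circ},1)}
\prod_{j=1}^{n}\frac{\|\beta_{j}\|_{L^{2}}^{2}}{Z^{\prime}(T_{j}^{\circ},1)}.
\end{displaymath}

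Finally, the right hand side of the corollary squared equals
$(E(g,n)E(1,1)^{n})^{-1}$ times the same ratio of $L^{2}$ norms and
Selberg derivatives. The two expressions therefore agree if and only if
$E(g+n,0)=\pi^{n}E(g,n)E(1,1)^{n}$, which is precisely relation
(\ref{equation:E}). Taking square roots concludes the argument.

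The only genuinely non-routine step is the cancellation of $\mu$: it is
the reason the Wolpert--Burger correction in Theorem
\ref{theorem:Wolpert-Burger} is exactly matched by the discrepancy
between $L^{2}$ volumes on $\fZ_{t}$ and on the nodal limit
$X\sqcup T_{1}\sqcup\ldots\sqcup T_{n}$, so all one has to make sure of
is the compatibility of normalizations ($V=2g-2+n$ in Section
\ref{section:conventions} versus $V_{g+n,0}=2g+2n-2$), which is where
the $\mu$ on the $L^{2}$ side originates.
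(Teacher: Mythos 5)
Your proof is correct and is precisely the argument the paper has in mind: the paper's own proof of the corollary is only a one-line citation of Definition \ref{definition:Quillen}, Theorem \ref{theorem:Wolpert-Burger}, Proposition \ref{proposition:L2_metric} and relation (\ref{equation:E}), and you have correctly unwound exactly those four ingredients, including the two cancelling factors of $\mu$ (the factor $\OO(\gamma^{1/2})$ in Proposition \ref{proposition:L2_metric}(ii) is a typo for $\OO(\mu^{1/2})$, as you implicitly and correctly assumed) and the bookkeeping of the power $|t|^{n/12}$ versus $|t|^{n/6}$ after squaring.
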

\begin{proof}
This is easily derived from the very definition of $\|\cdot\|_{Q}$ (see Section \ref{section:conventions}), Theorem \ref{theorem:Wolpert-Burger}, Proposition \ref{proposition:L2_metric} and the relation (\ref{equation:E}).
\end{proof}
\section{Metrized Mumford isomorphism on $\SM_{g,n}$}\label{section:Mumford_isometry}
In this section we establish the following metrized version of the Mumford isomorphism (Theorem \ref{theorem:Mumford_isomorphism}).
\begin{theorem}\label{theorem:Mumford_isometry}
Let $\DD_{g,n}^{\circ}$ be the restriction to $\SM_{g,n}$ of the Mumford isomorphism $\DD_{g,n}$. Then $\DD_{g,n}^{\circ}$ induces an isometry
\begin{displaymath}
	\overline{\DD}_{g,n}^{\circ}:\lambda_{g,n;Q}^{\otimes 12}\otimes\psi_{g,n;W}\overset{\sim}{\longrightarrow}\overline{\kappa}_{g,n}\otimes\OO(C(g,n))\,\,\,\text{on}\,\,\,
	\SM_{g,n}.
\end{displaymath}
\end{theorem}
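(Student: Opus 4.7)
The plan is to induct on the boundary geometry via the clutching morphism, reducing the general case $(g,n)$ to the compact smooth case $(g+n,0)$ where the arithmetic Riemann--Roch theorem of Gillet--Soul\'e applies directly. The base case $n=0$, $g\geq 2$ goes as follows. On $\SM_{g,0}$ the hyperbolic metric $\|\cdot\|_{\hyp}$ on $\omega_{\SCC_{g,0}/\SCM_{g,0}}$ is genuinely smooth, so $\pi$ equipped with this K\"ahler structure satisfies the hypotheses of Gillet--Soul\'e's theorem. Combining the resulting identity in $\ACH^{1}(\SM_{g,0})$ with the restriction of Theorem \ref{theorem:Mumford_isomorphism} to $\SM_{g,0}$ (where $\delta_{g,0}$ and $\psi_{g,0}$ are trivial) yields the isometry
\[
\overline{\DD}_{g,0}^{\circ}:\lambda_{g,0;Q}^{\otimes 12}\overset{\sim}{\longrightarrow}\overline{\kappa}_{g,0}\otimes\OO(C(g,0)).
\]
The specific constant $C(g,0)$ is forced by the $E(g,0)$-normalization of the Quillen metric and the precise shape of the arithmetic Todd class.

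For the inductive step with $n\geq 1$, consider the clutching morphism $\gamma:\SCM_{g,n}\times\SCM_{1,1}^{\times n}\to\SCM_{g+n,0}$, whose image lies in $\partial\SM_{g+n,0}$. On the algebraic side, Corollary \ref{corollary:Mumford_isomorphism} gives $\gamma^{\ast}\DD_{g+n,0}^{\circ}=\pm\DD_{g,n}^{\circ}\boxtimes(\DD_{1,1}^{\circ})^{\boxtimes n}$. On the hermitian side, Lemma \ref{lemma:clutching_liouville} provides the isometry $\gamma^{\ast}\overline{\kappa}_{g+n,0}\cong\overline{\kappa}_{g,n}\boxtimes\overline{\kappa}_{1,1}^{\boxtimes n}$; the delicate point is the Quillen metric on $\lambda_{g+n,0}$, which is singular along $\partial\SM_{g+n,0}$. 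Corollary \ref{corollary:L2_metric} shows that along a transverse one-parameter degeneration with parameter $t$, the Quillen norm of an extending frame behaves as $|t|^{-n/12}$, with limit computed via the clutching isomorphism. Extending node-by-node to the natural $n$-parameter transverse deformation and raising to the $12$th power, the divergence $\prod_{j}|t_{j}|^{-1}$ is matched, through the isomorphism (\ref{equation:4bis}) restricted to the smooth locus---which identifies $\gamma^{\ast}\delta_{g+n,0}$ with $\psi_{g,n}^{-1}\boxtimes\psi_{1,1}^{-1\boxtimes n}$---and through the local pinching relations $u_{j}v_{j}=t_{j}$, against the Wolpert metric on $\psi$: indeed $dt_{j}$ corresponds to $du_{j}\otimes dv_{j}$, whose Wolpert norm equals one. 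Combining all of the above with the base case applied to $\SM_{g+n,0}$ and with the relation $C(g+n,0)=C(g,n)C(1,1)^{n}$ from (\ref{equation:C}), the pulled-back isometry $\gamma^{\ast}\overline{\DD}_{g+n,0}^{\circ}$ splits as an exterior tensor product of the candidate $\overline{\DD}_{g,n}^{\circ}$ with $n$ copies of the candidate $\overline{\DD}_{1,1}^{\circ}$, the $\pi^{n}$ factor from (\ref{equation:E}) cancelling exactly as already observed in the proof of Corollary \ref{corollary:L2_metric}.

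To isolate each factor, I also establish the $(1,1)$ case by the same argument applied to $\gamma:\SCM_{1,1}\times\SCM_{1,1}\to\SCM_{2,0}$ followed by pull-back through the diagonal $\Delta:\SM_{1,1}\to\SM_{1,1}^{\times 2}$: this yields the square of the $(1,1)$ isometry, from which the $(1,1)$ case follows by taking positive square roots of the induced real-valued ratios of pointwise norms. Once $(1,1)$ is known, evaluating the tensor identity at a fixed complex point of $\SM_{1,1}^{\times n}$ separates off the $(g,n)$ factor and proves the theorem. The principal obstacle is the inductive step on the Quillen side: rigorously lifting the pointwise, one-parameter asymptotic of Corollary \ref{corollary:L2_metric} to a universal identification of hermitian line bundles along the image of $\gamma$. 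This demands (i) an $n$-variable extension of the degeneration analysis of Section \ref{section:analytic2}, showing that the $n$ pinching geodesics contribute independent $|t_{j}|^{-1/12}$ factors; (ii) a careful identification of the Wolpert metric on $\psi$ with the natural hermitian metric on the conormal bundle of $\partial\SM_{g+n,0}$ at the image of $\gamma$, encoded by (\ref{equation:4bis}) together with $u_{j}v_{j}=t_{j}$; and (iii) verifying that the $\pi^{n}$ factor from (\ref{equation:E}) cancels in the final formula so that only the constant $C(g,n)$ from (\ref{equation:C}) survives.
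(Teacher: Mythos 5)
Your proposal is correct and follows the same overall strategy as the paper: base case $(g,0)$ via Gillet--Soul\'e, then clutching $\gamma:\SCM_{g,n}\times\SCM_{1,1}^{\times n}\to\SCM_{g+n,0}$ together with the degeneracy result (Corollary \ref{corollary:L2_metric}), the local model $u_jv_j=t_j$ matching $\delta^{-1}$ to $\psi_W$, and finally isolating the $(1,1)$ factor by the diagonal to extract $\overline{\DD}_{g,n}^{\circ}$. However, your ``principal obstacle (i)'' --- an $n$-variable extension of the degeneration analysis of Section \ref{section:analytic2} producing independent $|t_j|^{-1/12}$ factors --- is not actually how the paper closes the argument, and it is worth recording the cleaner route. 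The paper first remarks that the factorization (\ref{equation:20}) is a statement about hermitian structures and may therefore be checked \emph{pointwise}. At the chosen boundary point, it introduces a modified Quillen metric $\|\cdot\|_{Q}^{\prime} = \|\cdot\|_{Q}\,|t_1\cdots t_n|^{1/12}$ on $\lambda(\ff)$ and the flat metric $\|e\|^{\prime}=1$ on $\delta(\ff)^{-1}$, so that $\overline{\fL}(\ff) = \lambda(\ff)^{\prime\otimes 12}_Q\otimes\overline{\delta(\ff)}^{\prime\,-1}$; continuity of the Liouville metric (Theorem \ref{theorem:liouville} via Corollary \ref{corollary:key}) plus smoothness of $\|\cdot\|^{\prime}$ then force $\|\cdot\|_Q^{\prime}$ to be continuous on the full $n$-parameter deformation space $\Omega$. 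Its value at the origin can therefore be computed along the diagonal one-parameter slice $t_1=\ldots=t_n=t$, which is exactly the situation of Corollary \ref{corollary:L2_metric}. In other words, the needed multi-parameter independence is bypassed entirely: continuity of the Liouville side replaces a multi-variable spectral degeneration argument. Your points (ii) and (iii) are handled exactly as you describe, using (\ref{equation:4bis}), the local equation $u_jv_j=t_j$, relations (\ref{equation:C})--(\ref{equation:E}), and the diagonal pull-back to get the $(1,1)$ case before dividing it out.
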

\begin{remark}
For the sake of simplicity, in the theorem we wrote $\lambda_{g,n}$, $\psi_{g,n}$, etc. instead of $\lambda_{g,n}\mid_{\SM_{g,n}}$, $\psi_{g,n}\mid_{\SM_{g,n}}$ etc. respectively.
\end{remark}
To lighten the forthcoming arguments, it is worth introducing some notations.
\begin{notation}\label{notation:Mumford_isometry}
We define the line bundles
\begin{displaymath}
	\begin{split}
		&L_{g,n}:=\lambda_{g,n}^{\otimes 12}\otimes\psi_{g,n}\otimes\delta_{g,n}^{-1}.\\
		&\fL_{g,n}:=(\lambda_{g,n}^{\otimes 12}\otimes\psi_{g,n}\otimes\delta_{g,n}^{-1})\boxtimes(\lambda_{1,1}^{\otimes 12}\otimes\psi_{1,1}\otimes\delta_{1,1}^{-1})^{\boxtimes n}
	\end{split}
\end{displaymath}
and the isomorphism
\begin{displaymath}
	\fD_{g,n}:=\DD_{g,n}\boxtimes\DD_{1,1}^{\boxtimes n}:\fL_{g,n}\overset{\sim}{\longrightarrow}\kappa_{g,n}\boxtimes\kappa_{1,1}^{\boxtimes n}.
\end{displaymath}
We let $\|\cdot\|_{g,n}$ be the continuous hermitian metric on $L_{g,n}$ such that the Mumford isomorphism $\DD_{g,n}$ becomes an isometry
\begin{displaymath}
	\overline{\DD}_{g,n}:\overline{L}_{g,n}:=(L_{g,n},\|\cdot\|_{g,n})\overset{\sim}{\longrightarrow}\overline{\kappa}_{g,n}\otimes\OO(C(g,n)).
\end{displaymath}
Finally, we write $\overline{\fL}_{g,n}=\overline{L}_{g,n}\boxtimes\overline{L}_{1,1}^{\boxtimes n}$ and $\overline{\fD}_{g,n}=\overline{\DD}_{g,n}\boxtimes\overline{\DD}_{1,1}^{\boxtimes n}$.
\end{notation}
The first observations towards the proof of Theorem \ref{theorem:Mumford_isometry} is summarized in the next proposition.
\begin{proposition}\label{proposition:key}
Let $g\geq 2$ be an integer. Then:

i. Theorem \ref{theorem:Mumford_isometry} holds true for $(g,n)=(g,0)$;

ii. endow $\delta_{g,0}^{-1}=\OO(-\pd\SM_{g,0})$ with the trivial singular metric coming from the absolute value; write $\overline{\delta}_{g,0}^{-1}$ for the resulting hermitian line bundle. Then $\lambda_{g,0;Q}^{\otimes 12}$ extends to a continuous hermitian line bundle $\lambda_{g,0;Q}^{\otimes 12}\otimes\overline{\delta}_{g,0}^{-1}$ on $\SCM_{g,0}$. Moreover, $\overline{\DD}^{\circ}_{g,0}$ extends to an isometry
\begin{displaymath}
	\lambda_{g,0;Q}^{\otimes 12}\otimes\overline{\delta}_{g,0}^{-1}\overset{\sim}{\longrightarrow}
	\overline{\kappa}_{g,0}\otimes\OO(C(g,0)).
\end{displaymath}
\end{proposition}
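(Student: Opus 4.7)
The plan is to treat both assertions simultaneously by comparing the Quillen metric on $\lambda_{g,0;Q}^{\otimes 12}$ with the Liouville metric on $\overline{\kappa}_{g,0}$ through the algebraic Mumford isomorphism $\DD_{g,0}$ of Theorem \ref{theorem:Mumford_isomorphism}. Since $n=0$ and $g\geq 2$, the family hyperbolic metric on $\omega_{\SCC_{g,0}/\SM_{g,0}}$ is smooth over $\SM_{g,0}^{\an}$, and there is no $\psi$-contribution to the Mumford isomorphism, so both tautological line bundles are genuine smooth hermitian line bundles there.

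First I would establish the curvature identity on $\SM_{g,0}^{\an}$. Applying the Bismut--Gillet--Soul\'e local index theorem to the smooth proper family $\pi:\SCC_{g,0}\mid_{\SM_{g,0}}\rightarrow\SM_{g,0}^{\an}$, endowed with the fibrewise hyperbolic K\"ahler form, one gets $\c1(\lambda_{g,0;Q})=\tfrac{1}{12}\pi_{\ast}(\c1(\omega_{\hyp})^{2})$. On the other hand, the definition of the Liouville metric together with the functoriality of the Deligne pairing yields $\c1(\overline{\kappa}_{g,0})=\pi_{\ast}(\c1(\omega_{\hyp})^{2})$. Consequently, under $\DD_{g,0}^{\circ}$, the ratio of hermitian metrics $\rho$ on the trivial line bundle satisfies $dd^{c}\log\rho=0$, so $\log\rho$ is pluriharmonic on $\SM_{g,0}^{\an}$.

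Next I would prove the extension statement of part (ii). The Liouville metric already extends continuously to $\SCM_{g,0}^{\an}$ by Theorem \ref{theorem:liouville}. For the Quillen factor, I would work locally around a point of $\pd\SM_{g,0}^{\an}$ using the small stable deformation of Construction \ref{construction:deformation} and the associated one-parameter slice $\fg:\fZ\rightarrow D$ through an irreducible boundary component with local equation $t$. Combining Wolpert's pinching expansion of the hyperbolic metric, his degeneration of the Selberg zeta function (Theorem \ref{theorem:Wolpert2}), and Burger's asymptotics for the small eigenvalues (Theorem \ref{theorem:Burger}), exactly as in the proofs of Theorem \ref{theorem:Wolpert3} and Proposition \ref{proposition:L2_metric}, one obtains an asymptotic expansion $\|\cdot\|_{Q}^{12}\sim |t|^{-1}\cdot(\text{continuous nonvanishing factor})$ per pinched geodesic. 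This precisely matches the weight of the trivial singular metric on $\overline{\delta}_{g,0}^{-1}$, so $\lambda_{g,0;Q}^{\otimes 12}\otimes\overline{\delta}_{g,0}^{-1}$ extends as a continuous hermitian line bundle across $\pd\SM_{g,0}^{\an}$.

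Finally, I would close the argument by a maximum principle. The ratio of the two continuous hermitian metrics on $\lambda_{g,0;Q}^{\otimes 12}\otimes\overline{\delta}_{g,0}^{-1}\otimes\overline{\kappa}_{g,0}^{-1}$ extends to a bounded continuous function $\rho$ on the proper smooth DM stack $\SCM_{g,0}^{\an}$, and $\log\rho$ is pluriharmonic on the complement of the normal crossings divisor $\pd\SM_{g,0}$. By a standard removable singularity statement for bounded pluriharmonic functions across normal crossings divisors, $\log\rho$ extends pluriharmonically to all of $\SCM_{g,0}^{\an}$; properness together with the irreducibility of the fibres of $\SCM_{g,0}\to\Spec\Int$ (Theorem \ref{theorem:irreducible_fibers}) forces $\rho$ to be a positive constant. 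Identifying this constant with $C(g,0)$ is a pointwise computation: one evaluates $\DD_{g,0}$ at one smooth arithmetic surface $\XX\to\Spec\OO$ and invokes the arithmetic Riemann--Roch theorem of Gillet--Soul\'e for $(\XX,\omega_{\XX/\BS,\hyp})$, the universal constants appearing there assembling to $C(g,0)$ as defined in Section \ref{section:conventions}.

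The main obstacle is the middle step: isolating the sharp exponent $1/12$ in the Quillen metric near \emph{every} type of boundary component of $\pd\SM_{g,0}$. Sections \ref{subsection:degeneracy_selberg} and \ref{subsection:degeneracy_Quillen} state the required analytic ingredients in the specific form used later (clutching elliptic tails), and the task here is to verify that the same Wolpert--Burger machinery applies verbatim to a generic pinching of a single simple closed geodesic on a compact surface of genus $g$, be it separating or non-separating; the computation of the exponent reduces to that of $Z'_{l(\gamma(t))}(1)\exp(\pi^{2}/3l(\gamma(t)))l(\gamma(t))$ as in the proof of Theorem \ref{theorem:Wolpert3}, combined with Burger's eigenvalue asymptotic for a single pinched geodesic.
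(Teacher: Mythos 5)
Your proposal is correct in spirit, but it takes a genuinely different and substantially more laborious route than the paper's. For part (i), the paper does not re-prove anything: it simply cites Deligne's Th.~11.4 together with the arithmetic Riemann--Roch theorem of Gillet--Soul\'e, and the only work is to check that the Quillen metric of Definition~\ref{definition:Quillen} agrees with theirs. That verification goes through the K\"ahler identity $\Delta_{\cpd}=\tfrac12\Delta_d$ and the d'Hoker--Phong/Sarnak formula $\detp\Delta_d = Z'(X,1)\exp((2g-2)(2\zeta'(-1)-\tfrac14+\tfrac12\log 2\pi))$, leading to $\detp\Delta_{\cpd}=E(g,0)Z'(X,1)$ and hence $\|\cdot\|=\|\cdot\|_Q$. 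Your proposal silently relies on the same compatibility of normalizations when, in the last step, you invoke Gillet--Soul\'e to pin down the constant; this needs to be made explicit, exactly as in the paper. More importantly, for part (ii) the paper's argument is entirely formal: once (i) holds over $\SM_{g,0}$ and $\overline{\kappa}_{g,0}$ is continuous on $\SCM_{g,0}$ (Theorem~\ref{theorem:liouville}), the algebraic isomorphism $\DD_{g,0}$ on $\SCM_{g,0}$ forces the metric on $\lambda_{g,0;Q}^{\otimes 12}\otimes\overline{\delta}_{g,0}^{-1}$ to extend continuously and $\overline{\DD}_{g,0}^{\circ}$ to extend to an isometry — no boundary asymptotics of the Quillen metric are needed at all.

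By contrast, your plan re-derives the Deligne--Gillet--Soul\'e isometry from scratch: Bismut--Gillet--Soul\'e local index theorem to get the curvature identity, Wolpert--Burger asymptotics to show boundedness of the ratio near $\pd\SM_{g,0}$, removable-singularity plus maximum principle to get a constant, and arithmetic RR at one point to identify it. This is a valid alternative, and what it buys is independence from Deligne's explicit functorial isomorphism. But it costs you a nontrivial extension of the degeneration analysis: Sections~\ref{section:analytic1}--\ref{section:analytic2} of the paper prove the Selberg and small-eigenvalue asymptotics only for the specific clutching geometry of attaching elliptic tails (a star-shaped dual graph with the weighted-graph matrix $A(t)$ that the paper writes down explicitly). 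Your step~2 claims the same machinery applies ``verbatim'' to arbitrary pinchings of simple closed geodesics on a compact genus-$g$ surface, separating or not. That is plausible — the underlying references \cite{Wolpert:Selberg}, \cite{Schulze}, \cite{Burger} are indeed general — but the paper has deliberately avoided having to verify it, and if you go this route you must at least check that Burger's graph Laplacian computation produces a nonzero, $t$-independent limit for $\prod_j\lambda_j(t)/l(\gamma_j(t))$ in each local NCD chart, with the correct number of small eigenvalues per boundary stratum, before the $|t|^{-1/12}$-per-node exponent can be extracted. In short: correct idea, but you are re-proving a cited theorem and doing so requires substantially more analytic work than the proof actually given, while also omitting the normalization check that makes the appeal to Gillet--Soul\'e legitimate.
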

\begin{proof}
In the present form, the first item is a theorem of Deligne \cite[Th. 11.4]{Deligne} and Gillet-Soul\'e \cite{GS}. Indeed, it is enough to point out the following facts:
\begin{itemize}
	\item[--] Deligne's functorial isomorphism coincides with $\DD^{\circ}_{g,0}$, up to a sign. This is justified by $H^{0}(\SM_{g,0},\Gm)=\lbrace\pm 1\rbrace$ \cite[Lemma 2.2.3]{MB};
	\item[--] the normalization of the Quillen metric on $\lambda(\omega_X)$ of Deligne and Gillet-Soul\'e in loc. cit. coincides with ours (see Section \ref{section:conventions}). Let $X$ be a compact Riemann surface of genus $g$, with hyperbolic metric of curvature $-1$, $ds_{\hyp}^{2}$. Let $h$ be the hermitian metric induced by $ds_{\hyp}^{2}$ on $T_{X}$. Denote by $\Delta_{\cpd}=\cpd^{\ast}\cpd$ the associated $\cpd$-laplacian acting on functions, and $\Delta_{d}=d^{\ast}d$ the hyperbolic scalar laplacian. Recall the K\"ahler identity \cite[Ch. 5]{Voisin}
\begin{equation}\label{equation:16}
	\Delta_{\cpd}=\frac{1}{2}\Delta_{d}.
\end{equation}
Deligne and Gillet-Soul\'e work with the Quillen metric
\begin{displaymath}
	\|\cdot\|=(\detp\Delta_{\cpd})^{-1/2}\|\cdot\|_{L^{2}}.\footnote{This definition agrees with the one by Deligne and Gillet-Soul\'e due to the remark preceding \cite[Prop. 2.7, p. 159]{RS}.}
\end{displaymath}
We now check the relation
\begin{equation}\label{equation:16bis}
 	\detp\Delta_{\cpd}=E(g,0)Z^{\prime}(X,1).
\end{equation}
First of all, since $X$ has genus $g$ and by (\ref{equation:16}), we compute
\begin{equation}\label{equation:17}
	\detp\Delta_{\cpd}=2^{(g+2)/3}\detp\Delta_{d}
\end{equation}
(see \cite[Eq. 13.2, p. 88]{BB}). By a theorem of d'Hoker-Phong \cite{dHP} and Sarnak \cite[Cor. 1]{Sarnak}, we have the expression
\begin{equation}\label{equation:18}
	\detp\Delta_{d}=Z^{\prime}(X,1)\exp((2g-2)(2\zeta^{\prime}(-1)-1/4+1/2\log2\pi)).
\end{equation}
Equations (\ref{equation:17})--(\ref{equation:18}) together already imply (\ref{equation:16bis}), and hence the equality $\|\cdot\|=\|\cdot\|_{Q}$.
\end{itemize}
The second assertion \textit{ii} is derived by combination of the Mumford isomorphism on $\SCM_{g,0}$ (Theorem \ref{theorem:Mumford_isomorphism}), the continuity of the Liouville metric (Theorem \ref{theorem:liouville}) and the first point \textit{i}.
\end{proof}
\begin{corollary}\label{corollary:key}
i. For every integer $g\geq 2$, the continuous hermitian line bundle $\overline{L}_{g,0}=\overline{\fL}_{g,0}$ satisfies the following factorization formula:
\begin{displaymath}
	\overline{\fL}_{g,0}=\lambda_{g,0;Q}^{\otimes 12}\otimes\overline{\delta}_{g,0}^{-1}.
\end{displaymath}

ii. There is a diagram of isometries of continuos hermitian line bundles on $\SCM_{g,n}$
\begin{equation}\label{equation:19}
	\xymatrix{
		\gamma^{\ast}\overline{\fL}_{g+n,0}\ar[r]^{\overset{\hspace{-1cm}\gamma^{\ast}\overline{\DD}_{g+n,0}}{\hspace{-1cm}\sim}}\ar[d]_{\wr}
		&\gamma^{\ast}(\overline{\kappa}_{g+n,0}\otimes\OO(C(g+n,0)))\ar[d]^{\wr}\\
		\overline{\fL}_{g,n}\ar[r]_{\underset{\hspace{-3cm}\overline{\fD}_{g,n}}{\hspace{-3cm}\sim}}
		&(\overline{\kappa}_{g,n}\otimes\OO(C(g,n)))\boxtimes(\overline{\kappa}_{1,1}\otimes O(C(1,1)))^{\boxtimes n},
	}
\end{equation}
commutative up to a sign. The isomorphisms underlying the vertical arrows are induced by (\ref{equation:3bis})--(\ref{equation:6bis}) (Corollary \ref{corollary:clutching}).
\end{corollary}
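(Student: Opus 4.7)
The plan is to assemble both assertions from results already in hand: Proposition~\ref{proposition:key}, Lemma~\ref{lemma:clutching_liouville}, Corollary~\ref{corollary:Mumford_isomorphism} and the numerical relation (\ref{equation:C}). The argument is essentially bookkeeping with metrics.

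For part (i), I would begin by noting that when $n=0$ the factor $\psi_{g,0}$ is an empty tensor product and is therefore canonically trivial; consequently $L_{g,0}=\fL_{g,0}=\lambda_{g,0}^{\otimes 12}\otimes\delta_{g,0}^{-1}$ and $\fD_{g,0}=\DD_{g,0}$. By Notation~\ref{notation:Mumford_isometry}, the continuous metric $\|\cdot\|_{g,0}$ on $L_{g,0}$ is defined so that $\DD_{g,0}$ becomes an isometry onto $\overline{\kappa}_{g,0}\otimes\OO(C(g,0))$. Proposition~\ref{proposition:key}~(ii) provides another such isometry out of $\lambda_{g,0;Q}^{\otimes 12}\otimes\overline{\delta}_{g,0}^{-1}$ with the same underlying isomorphism. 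Since continuous metrics making a fixed isomorphism an isometry are unique, this forces $\overline{\fL}_{g,0}=\lambda_{g,0;Q}^{\otimes 12}\otimes\overline{\delta}_{g,0}^{-1}$ as continuous hermitian line bundles on $\SCM_{g,0}$.

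For part (ii), I would identify the four arrows separately and then glue them. The left vertical is the isomorphism of line bundles coming from the combination of (\ref{equation:3bis})--(\ref{equation:6bis}) of Corollary~\ref{corollary:clutching}; an elementary rearrangement shows it does land in $\fL_{g,n}$. The right vertical is the tensor of the isometry $\gamma^{\ast}\overline{\kappa}_{g+n,0}\overset{\sim}{\to}\overline{\kappa}_{g,n}\boxtimes\overline{\kappa}_{1,1}^{\boxtimes n}$ from Lemma~\ref{lemma:clutching_liouville} with the tautological isometry $\OO(C(g+n,0))\overset{\sim}{\to}\OO(C(g,n))\boxtimes\OO(C(1,1))^{\boxtimes n}$ dictated by the multiplicative relation (\ref{equation:C}); hence it is an isometry of continuous hermitian line bundles. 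The two horizontal arrows are isometries by the very definition of $\|\cdot\|_{g+n,0}$, $\|\cdot\|_{g,n}$ and $\|\cdot\|_{1,1}$ in Notation~\ref{notation:Mumford_isometry}.

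It remains to chase the diagram. Corollary~\ref{corollary:Mumford_isomorphism} asserts that the underlying line-bundle morphisms $\gamma^{\ast}\DD_{g+n,0}$ and $\DD_{g,n}\boxtimes\DD_{1,1}^{\boxtimes n}$ agree up to a sign once transported through (\ref{equation:3bis})--(\ref{equation:6bis}) and Lemma~\ref{lemma:clutching_liouville}. Hence the square commutes up to a sign, and the three already-established isometries force the left vertical arrow to be an isometry as well, which simultaneously yields the sign-commutativity claimed in the statement. I do not anticipate any serious obstacle; the only minor point to check is that the left vertical is a morphism of \emph{continuous} hermitian line bundles, which follows from part~(i) together with the continuity of the Liouville metric (Corollary~\ref{corollary:liouville}) and of the Quillen metric across $\pd\SM_{g,n}$ implicit in Proposition~\ref{proposition:key}~(ii).
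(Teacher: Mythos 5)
Your proposal is correct and follows essentially the same route as the paper: part (i) is recognized as a restatement of Proposition \ref{proposition:key}~(ii) once one notes that $\psi_{g,0}$ is trivially trivial, and part (ii) assembles the diagram from Notation \ref{notation:Mumford_isometry}, relation (\ref{equation:C}), Lemma \ref{lemma:clutching_liouville}, and Corollary \ref{corollary:Mumford_isomorphism}, exactly the ingredients cited in the paper. The only cosmetic remark is that your closing sentence (the left vertical being an isometry ``simultaneously yields'' sign-commutativity) inverts the logical order slightly --- sign-commutativity is what Corollary \ref{corollary:Mumford_isomorphism} gives you at the level of line bundles, and the three established isometries then force the fourth --- but the argument is sound.
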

\begin{proof}
The first claim is a reformulation of Proposition \ref{proposition:key} \textit{ii}. For the second assertion, we first observe that if we forget the hermitian structures, then (\ref{equation:19}) is a consequence of Corollary \ref{corollary:irreducible_fibers}, Corollary \ref{corollary:clutching} and Corollary \ref{corollary:Mumford_isomorphism}. The existence of the whole diagram (\ref{equation:19}) is a conjunction of the very definition of $\overline{\fL}_{g,n}$ and $\overline{\fD}_{g,n}$ (Notation \ref{notation:Mumford_isometry}), relation (\ref{equation:C}), Lemma \ref{lemma:clutching_liouville} and Proposition \ref{proposition:key} \textit{ii}.
\end{proof}
Theorem \ref{theorem:Mumford_isometry} is actually equivalent to the next apparently weaker statement.
\begin{proposition}\label{proposition:Mumford_isometry}
There is a factorization of hermitian line bundles on $\SM_{g,n}\times\SM_{1,1}^{\times n}$
\begin{equation}\label{equation:20}
	\overline{\fL}_{g,n}\mid_{\SM_{g,n}\times\SM_{1,1}^{\times n}}=(\lambda_{g,n;Q}^{\otimes 12}\otimes\psi_{g,n;W})\boxtimes(\lambda_{1,1;Q}^{\otimes 12}\otimes\psi_{1,1;Q})^{\boxtimes n}.
\end{equation}
\end{proposition}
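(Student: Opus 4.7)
The plan is to deduce Proposition \ref{proposition:Mumford_isometry} from the known case treated in Proposition \ref{proposition:key} by pulling back along the clutching morphism $\gamma:\SCM_{g,n}\times\SCM_{1,1}^{\times n}\to\SCM_{g+n,0}$. By Corollary \ref{corollary:key}(ii), the diagram (\ref{equation:19}) furnishes an isometry $\overline{\fL}_{g,n}\simeq\gamma^{\ast}\overline{\fL}_{g+n,0}$ of continuous hermitian line bundles, and Corollary \ref{corollary:key}(i) identifies the right-hand side with $\gamma^{\ast}\bigl(\lambda_{g+n,0;Q}^{\otimes 12}\otimes\overline{\delta}_{g+n,0}^{-1}\bigr)$. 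On the open substack $\SM_{g,n}\times\SM_{1,1}^{\times n}$, the algebraic isomorphisms (\ref{equation:3bis}) and (\ref{equation:4bis}) (together with the canonical trivializations of $\delta_{g,n}$ and $\delta_{1,1}$ on the smooth loci, via the section $s_{\pd}$) identify the underlying line bundle with $(\lambda_{g,n}^{\otimes 12}\otimes\psi_{g,n})\boxtimes(\lambda_{1,1}^{\otimes 12}\otimes\psi_{1,1})^{\boxtimes n}$. Hence (\ref{equation:20}) reduces to a pointwise comparison of two continuous hermitian metrics on the same algebraic line bundle at an arbitrary point $(P,Q_{1},\ldots,Q_{n})\in\SM_{g,n}\times\SM_{1,1}^{\times n}$.

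To carry out this comparison, I would form the one-parameter degeneration $\fg:\fZ\to D$ of Section \ref{subsection:statement_theorem_selberg} with $t_{1}=\cdots=t_{n}=t$, whose central fiber is the clutched curve $Y=X\sqcup_{j}T_{j}$, and choose a frame $\widetilde{\alpha}_{1},\ldots,\widetilde{\alpha}_{g},\widetilde{\beta}_{1},\ldots,\widetilde{\beta}_{n}$ of $R^{0}\fg_{\ast}\omega_{\fZ/D}$ as in Proposition \ref{proposition:L2_metric}, specialising at $t=0$ to bases $\alpha_{i}$ of $H^{0}(X,\omega_{X})$ and $\beta_{j}$ of $H^{0}(T_{j},\omega_{T_{j}})$. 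On $\overline{L}_{g+n,0}$, the local frame $\theta_{t}:=(\widetilde{\alpha}_{1}\wedge\cdots\wedge\widetilde{\beta}_{n})^{\otimes 12}\otimes\bigl((t_{1}\cdots t_{n})/s_{\pd}\bigr)$ has norm $\|\widetilde{\alpha}_{1}\wedge\cdots\wedge\widetilde{\beta}_{n}\|_{Q,t}^{12}\,|t|^{n}$ for $t\neq 0$, since the trivial singular metric on $\delta_{g+n,0}^{-1}$ assigns the value $|t_{1}\cdots t_{n}|=|t|^{n}$ to the frame dual to $s_{\pd}/(t_{1}\cdots t_{n})$. By Corollary \ref{corollary:L2_metric} this norm converges to $\bigl(\|\alpha_{1}\wedge\cdots\wedge\alpha_{g}\|_{Q}\prod_{j}\|\beta_{j}\|_{Q}\bigr)^{12}$ as $t\to 0$, and by continuity of the metric on $\overline{L}_{g+n,0}$ this limit is precisely the value at the boundary point.

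To conclude, one has to identify the image of $\theta_{0}$ under the isomorphisms (\ref{equation:3bis}) and (\ref{equation:4bis}). The Knudsen isomorphism (\ref{equation:3bis}) sends $(\widetilde{\alpha}_{1}\wedge\cdots\wedge\widetilde{\beta}_{n})^{\otimes 12}|_{t=0}$ to $\pm(\alpha_{1}\wedge\cdots\wedge\alpha_{g})^{\otimes 12}\otimes\bigotimes_{j}\beta_{j}^{\otimes 12}$. The main obstacle of the proof is to trace through the iterated construction behind (\ref{equation:4bis})---based on Knudsen's Theorem \ref{theorem:clutching}, in particular the explicit description (\ref{equation:2}) of $\bigwedge^{2}\mathcal{I}/\mathcal{I}^{2}$ at the node---to see that the frame $(t_{1}\cdots t_{n})/s_{\pd}$ of $\gamma^{\ast}\delta_{g+n,0}^{-1}$ restricts at the clutching point to $\pm\bigotimes_{j=1}^{n}(du_{j}\otimes dv_{j})$, where $u_{j},v_{j}$ are the \textit{rs}-coordinates at the two sides of the $j$-th node. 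This ultimately reflects Wolpert's pinching expansion \cite[Exp. 4.2]{Wolpert:hyperbolic}: in \textit{rs}-coordinates the universal deformation of the node takes the form $u_{j}v_{j}=t_{j}$, so that the local equation of $\pd\SM_{g+n,0}$ pulls back (up to a unit) to the tensor of cotangent generators at the cusps. Since the Wolpert metric is defined so that $\|du_{j}\|_{W}=\|dv_{j}\|_{W}=1$, the right-hand side of (\ref{equation:20}) evaluates this frame to the same quantity $\bigl(\|\alpha_{1}\wedge\cdots\wedge\alpha_{g}\|_{Q}\prod_{j}\|\beta_{j}\|_{Q}\bigr)^{12}$ as the limit computed above. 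The two continuous hermitian metrics thus agree at every point of $\SM_{g,n}\times\SM_{1,1}^{\times n}$, establishing the factorization (\ref{equation:20}).
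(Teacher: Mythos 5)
Your proposal is essentially the same argument as the paper's proof: reduce to a pointwise comparison of continuous hermitian metrics at a boundary point $R=\gamma(N)$, pass to the one-parameter degeneration $\fg:\fZ\rightarrow D$ with $t_{1}=\cdots=t_{n}=t$, compute the limit of the Quillen-metric norm of the frame $\widetilde{\alpha}_{1}\wedge\cdots\wedge\widetilde{\beta}_{n}$ via Corollary \ref{corollary:L2_metric}, and identify the image of the local frame of $\gamma^{\ast}\delta_{g+n,0}^{-1}$ under (\ref{equation:4bis}) with $\pm\bigotimes_{j}(du_{j}\otimes dv_{j})$ so that the Wolpert normalization gives the correct value. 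The paper organizes the same comparison slightly differently (introducing the auxiliary metric $\|\cdot\|_{Q}^{\prime}=\|\cdot\|_{Q}|t_{1}\cdots t_{n}|^{1/12}$ on $\lambda(\ff)$ and a smooth metric $\|\cdot\|^{\prime}$ on $\delta(\ff)^{-1}$, so as to split the factorization into separate isometries (\ref{equation:22bis}) and (\ref{equation:23})), but the underlying computation is yours. One small correction: the fact that the frame $e=t_{1}\cdots t_{n}$ of $\delta(\ff)^{-1}$ maps under $\Theta_{N}$ to $\pm\bigotimes_{j}(du_{j}\otimes dv_{j})$ rests on the \emph{algebraic} local model $u_{j}v_{j}=t_{j}$ for the degenerating family, which comes from Knudsen's clutching (Theorem \ref{theorem:clutching}) and is verified carefully in Edixhoven \cite[Sec. 4]{Edixhoven}; Wolpert's pinching expansion \cite[Exp. 4.2]{Wolpert:hyperbolic} concerns the hyperbolic metric, not this algebraic plumbing identity, so the citation you give for this step is misplaced even though the mathematical claim is correct.
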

\begin{proof}
We first observe that proving (\ref{equation:20}) is tantamount to proving the factorization formula
\begin{equation}\label{equation:21}
	\overline{\fL}_{g,n}^{\an}\mid_{\SM_{g,n}^{\an}\times\SM_{1,1}^{\an\times n}}=(\lambda_{g,n;Q}^{\an\otimes 12}\otimes\psi_{g,n;W}^{\an})\boxtimes
	(\lambda_{1,1;Q}^{\an\otimes 12}\otimes\psi_{1,1;Q}^{\an})^{\boxtimes n}.
\end{equation}
Since we aim to show the factorization of the underlying hermitian structures, we shall establish (\ref{equation:21}) pointwise. 

By Corollary \ref{corollary:key} there is an isometry of continuous hermitian line bundles on $\SCM_{g,n}\times\SCM_{1,1}^{\times n}$
\begin{displaymath}
	\overline{\Psi}:\gamma^{\ast}\overline{\fL}_{g+n,0}\overset{\sim}{\longrightarrow}\overline{\fL}_{g,n}.
\end{displaymath}
The isomorphism of line bundles underlying $\overline{\Psi}$ is build up with (\ref{equation:3bis})--(\ref{equation:4bis}). Fix a complex valued point $N=(P,Q_{1},\ldots,Q_{n})\in\SM_{g,n}(\CC)\times\SM_{1,1}(\CC)^{\times n}$; let $R$ be its image in $\SCM_{g+n,0}(\CC)$ by the clutching morphism $\gamma$. At the point $N$, $\overline{\Psi}$ induces an isometry of complex hermitian lines
\begin{equation}\label{equation:21bis}
	\overline{\Psi}_{N}:R^{\ast}\overline{\fL}_{g+n,0}=N^{\ast}\gamma^{\ast}\overline{\fL}_{g+n,0}\overset{\sim}{\longrightarrow}N^{\ast}\overline{\fL}_{g,n}.
\end{equation}
We focus on $R^{\ast}\overline{\fL}_{g+n,0}$. Let $(X;a_{1},\ldots,a_{n})$, $(T_{1};b_{1}),\ldots, (T_{n},b_{n})$ be the pointed stable curves corresponding to $P$, $Q_{1},\ldots,Q_{n}$, respectively. Let $Y$ be the curve represented by $R$. By means of Construction \ref{construction:deformation} we obtain a small stable deformation $\ff:\fY\rightarrow\Omega$ of $Y$. Attached to $\ff$ there is a classifying morphism
\begin{displaymath}
	\C(\ff):\Omega\longrightarrow\SCM_{g+n,0}^{\an}.
\end{displaymath}
We agree in denoting the pull-backs of $\lambda_{g+n,0}^{\an}$, $\delta_{g+n,0}^{\an}$, $\fL_{g+n,0}^{\an}$, etc. to $\Omega$ by $\lambda(\ff)$, $\delta(\ff)$, $\fL(\ff)$, etc. respectively. We also set $R(\ff):=0\in\Omega$. We have to study the hermitian complex line $R(\ff)^{\ast}\overline{\fL}(\ff)$. With the notations of Construction \ref{construction:deformation}, the equation of the divisor of the singular fibers of $\ff$ is $t_{1}\cdot\ldots\cdot t_{n}=0$. After possibly shrinking $\Omega$, the holomorphic section $e:=t_{1}\cdot\ldots\cdot t_{n}$ is a frame of $\delta(\ff)^{-1}$. We introduce two auxiliary metrics:
\begin{itemize}
	\item[--] a modified Quillen metric $\|\cdot\|_{Q}^{\prime}$ on $\lambda(\ff)$:
		\begin{displaymath}
			 \|\cdot\|_{Q;(s,t)}^{\prime}=\|\cdot\|_{Q}^{\prime}|t_{1}\cdot\ldots\cdot t_{n}|^{1/12}\,\,\,\text{at the point}\,\,\,(s,t)\in\Omega;
		\end{displaymath}
	\item[--] the smooth metric $\|\cdot\|^{\prime}$ on $\delta(\ff)^{-1}$ defined by the rule
		\begin{displaymath}
			\|e\|^{\prime}=1.
		\end{displaymath}
\end{itemize}
With these choices, there is an obvious equality
\begin{equation}\label{equation:22}
	\overline{\fL}(\ff)=\lambda(\ff)_{Q}^{\otimes 12}\otimes\overline{\delta(\ff)}^{-1}=\lambda(\ff)^{\prime\otimes 12}_{Q}\otimes\overline{\delta(\ff)}^{\prime -1}.
\end{equation}
We point out two features concerning $\|\cdot\|_{Q}^{\prime}$ and $\|\cdot\|^{\prime}$:
\begin{itemize}
	\item[--] by the continuity of the metric of $\overline{\fL}(\ff)$, the smoothness of $\|\cdot\|^{\prime}$ and equality (\ref{equation:22}), we see that the metric $\|\cdot\|_{Q}^{\prime}$ is actually continuous;
	\item[--] the pull-back of the isomorphism (\ref{equation:4bis}) by $N$ yields
		\begin{displaymath}
			\begin{split}
				\Theta_{N}:R^{\ast}\delta_{g,n}^{-1}=R(\ff)^{\ast}\delta(\ff)^{-1}&\overset{\sim}{\longrightarrow} P^{\ast}\psi_{g,n}\otimes\bigotimes_{j=1}^{n} Q_{j}^{\ast}\psi_{1,1}\\
				R(\ff)^{\ast}e&\longmapsto \pm(\otimes_{j=1}^{n} du_{j})\otimes(\otimes_{j=1}^{n} dv_{j}),
			\end{split}
		\end{displaymath}
		where $u_{j}$, $v_{j}$ are the \textit{rs} coordinates at $a_{j}$, $b_{j}$, respectively, used to construct $\ff:\fY\rightarrow\Omega$. This is so because the degeneration of the family $\ff$ in a neighborhood of the node $a_{j}\sim b_{j}$ is modeled by $u_{j}v_{j}=t_{j}$ (see \cite[Sec. 4]{Edixhoven} for a detailed proof). Therefore, if we endow $\delta(\ff)^{-1}$ with the metric $\|\cdot\|^{\prime}$, then $\Theta_{N}$ becomes an isometry
		\begin{equation}\label{equation:22bis}
			\overline{\Theta}_{N}:R(\ff)^{\ast}\overline{\delta(\ff)}^{\prime -1}\overset{\sim}{\longrightarrow} P^{\ast}\psi_{g,n;W}\otimes\bigotimes_{j=1}^{n} Q_{j}^{\ast}\psi_{1,1;W}.
		\end{equation}
		Indeed, it suffices to recall that the Wolpert metric assigns the value $1$ to $du_{j}$ and $dv_{j}$ (see Definition \ref{definition:Wolpert_metric}).
\end{itemize}
We claim that  the isomorphism (\ref{equation:3bis}) induces an isometry
\begin{equation}\label{equation:23}
	\overline{\Phi}_{N}:R(\ff)^{\ast}\lambda(\ff)^{\prime}_{Q}\overset{\sim}{\longrightarrow}P^{\ast}\lambda_{g,n;Q}\otimes\bigotimes_{j=1}^{n}Q_{j}^{\ast}\lambda_{1,1;Q}.
\end{equation}
Let $\fg:\fZ\rightarrow D$ be the restriction of $\ff$ to the locus $s_{1}=\ldots=s_{r}=0$, $t_{1}=\ldots=t_{n}=t\in D$, for some small disc $D\subset\CC$ centered at $R(\fg):=0$. Let $\C(\fg)$ be the associated classifying map. By the very definition of $\fg$ there is a commutative diagram
\begin{equation}\label{equation:24}
	\xymatrix{
		&R(\fg)=0\,\ar[dd]\ar@{^{(}->}[r]	& D\ar@{^{(}->}[dd]_{\iota}\ar[rd]^{\C(\fg)}	&\\
		&	&	&\SCM_{g+n,0}^{\an}.\\
		&R(\ff)=0\,\ar@{^{(}->}[r]	&\Omega\ar[ru]_{\C(\ff)}		&
	}
\end{equation}
From (\ref{equation:24}) we derive the equality of line bundles
\begin{displaymath}
 	\lambda(\fg)=\C(\fg)^{\ast}\lambda_{g+n,0}^{\an}=\iota^{\ast}\lambda(\ff).
\end{displaymath}
Therefore, the pull-back of $\|\cdot\|_{Q}^{\prime}$ by $\iota$ is a continuous hermitian structure on $\lambda(\fg)$. Write $\lambda(\fg)_{Q}^{\prime}$ for the resulting hermitian line bundle. The claim (\ref{equation:23}) is equivalent to asserting an isometry induced by (\ref{equation:3bis})
\begin{equation}\label{equation:25bis}
	\overline{\Phi}_{N}:R(\fg)^{\ast}\lambda(\fg)^{\prime}_{Q}\overset{\sim}{\longrightarrow} P^{\ast}\lambda_{g,n;Q}\otimes\bigotimes_{j=1}^{n}Q_{j}^{\ast}\lambda_{1,1;Q}.
\end{equation}
The validity of (\ref{equation:25bis}) has been established in \textsection \ref{subsection:degeneracy_Quillen}, Corollary \ref{corollary:L2_metric}. We come up to the conclusion by (\ref{equation:21bis})--(\ref{equation:23}).
\end{proof}
\begin{proof}[Proof of Theorem \ref{theorem:Mumford_isometry}]
By the very definition of $\overline{L}_{g,n}$ (Notation \ref{notation:Mumford_isometry}) it suffices to show a decomposition of hermitian line bundles
\begin{equation}\label{equation:25}\tag{$\text{DEC}_{g,n}$}
	\overline{L}_{g,n}\mid_{\SM_{g,n}}=\lambda_{g,n;Q}^{\otimes 12}\otimes\psi_{g,n;W}.
\end{equation}
As a first step, we treat the case $g=n=1$. Proposition \ref{proposition:Mumford_isometry} yields
\begin{equation}\label{equation:26}
	\overline{\fL}_{1,1}\mid_{\SM_{1,1}^{\times 2}}=(\lambda_{1,1;Q}^{\otimes 12}\otimes\psi_{1,1;W})^{\boxtimes 2}.
\end{equation}
Recall that $\overline{\fL}_{1,1}\mid_{\SM_{1,1}^{\times 2}}=\overline{L}_{1,1}^{\boxtimes 2}$. Therefore the pull-back of (\ref{equation:26}) by the diagonal morphism $\Delta:\SM_{1,1}\rightarrow\SM_{1,1}\times\SM_{1,1}$ leads to
\begin{equation}\label{equation:27}
	\overline{L}_{1,1}\mid_{\SM_{1,1}}^{\otimes 2}=(\lambda_{1,1;Q}^{\otimes 12}\otimes\psi_{1,1;W})^{\otimes 2}.
\end{equation}
Because $L_{1,1}\mid_{\SM_{1,1}}=\lambda_{1,1}^{\otimes 12}\otimes\psi_{1,1}$, the formula ($\text{DEC}_{1,1}$) follows from (\ref{equation:27}). To establish (\ref{equation:25}) for general $g,n$, we tensor equation (\ref{equation:20}) in Proposition \ref{proposition:Mumford_isometry} by the identity $(pr_{2}^{\ast}(\text{DEC}_{1,1})^{\boxtimes n})^{\otimes -1}$ ($pr_{2}$ is the projection $\SM_{g,n}\times\SM_{1,1}^{\times n}\rightarrow\SM_{1,1}^{\times n}$). We obtain
\begin{equation}\label{equation:28}
	pr_{1}^{\ast}\overline{L}_{g,n}\mid_{\SM_{g,n}}=pr_{1}^{\ast}(\lambda_{g,n;Q}^{\otimes 12}\otimes\psi_{g,n;W}).
\end{equation}
By definition of $L_{g,n}$, $L_{g,n}\mid_{\SM_{g,n}}=\lambda_{g,n}^{\otimes 12}\otimes\psi_{g,n}$. Consequently, (\ref{equation:28}) already implies the coincidence of the metric on $\overline{L}_{g,n}\mid_{\SM_{g,n}}$ and the metric on $\lambda_{g,n;Q}^{\otimes 12}\otimes\psi_{g,n;W}$, hence ($\text{DEC}_{g,n}$). This completes the proof of the theorem.
\end{proof}
\begin{remark}\label{remark:psi_cont}
The Wolpert metric on $\psi_{g,n}$ naturally extends to $\SCM_{g,n}$. For the clutching morphism $\beta:\SCM_{g_{1},n_{1}+1}\times\SCM_{g_{2},n_{2}+1}\rightarrow\SCM_{g_{1}+g_{2},n_{1}+n_{2}}$ we have an isometry of hermitian line bundles
\begin{equation}\label{equation:28bis_1}
	\beta^{\ast}\psi_{g_{1}+g_{2},n_{1}+n_{2};W}\overset{\sim}{\longrightarrow}\psi_{g_{1},n_{1}+1;W}
	\boxtimes\psi_{g_{2},n_{2}+1;W}.
\end{equation}
In view of the results in \cite[Chap. 4]{GFM:thesis} --specially Section 4.3 of loc. cit.--, we expect that $\psi_{g,n;W}$ is a continuous hermitian line bundle on $\SCM_{g,n}$. In this case, the isometry $\overline{\DD}_{g,n}^{\circ}$ of Theorem \ref{theorem:Mumford_isometry} extends to an isometry of continuous hermitian line bundles
\begin{displaymath}
	\overline{\DD}_{g,n}:\lambda_{g,n;Q}^{\otimes 12}\otimes\overline{\delta}_{g,n}^{-1}\otimes
	\psi_{g,n;W}\overset{\sim}{\longrightarrow}\overline{\kappa}_{g,n}\otimes\OO(C(g,n))\,\,\,\text{on}\,\,\,
	\SCM_{g,n},
\end{displaymath}
where $\overline{\delta}^{-1}_{g,n}$ is equipped with the trivial singular metric coming from the absolute value. A parallel argument as for the proof of Theorem \ref{theorem:Mumford_isometry} and Proposition \ref{proposition:Mumford_isometry} --combining Lemma \ref{lemma:clutching_liouville}, equation (\ref{equation:28bis_1}), \cite[Th. 35 and Th. 38]{Schulze} and \cite[Th. 1.1]{Burger}-- will then lead to a factorization
\begin{equation}\label{equation:28bis_2}
	\beta^{\ast}\overline{\DD}_{g_{1}+g_{2},n_{1}+n_{2}}=\overline{\DD}_{g_{1},n_{1}+1}\boxtimes
	\overline{\DD}_{g_{2},n_{2}+1}.
\end{equation}
An analogous compatibility formula is expected for the clutching morphism $\alpha:\SCM_{g-1,n+2}\rightarrow\SCM_{g,n}$ of \cite[Def. 3.8 and Th. 4.2]{Knudsen}:
\begin{equation}\label{equation:28bis_3}
	\alpha^{\ast}\overline{\DD}_{g,n}=\overline{\DD}_{g-1,n+2}.	
\end{equation}
Conversely, assume that for a suitable choice of constant $E(g,n)$ --and the subsequent choice of Quillen metric determined by Definition \ref{definition:Quillen}-- the relations (\ref{equation:28bis_2})--(\ref{equation:28bis_3}) hold. An algebraic manipulation then shows that $E(0,3)$ determines $E(g,n)$. Furthermore, by Theorem A applied to $(\PP^{1}_{\Int};0,1,\infty)$, $E(0,3)$ coincides with the constant (\ref{equation:E}) (for $g=0$ and $n=3$), and so does $E(g,n)$. This explains the significance of Theorem A in the case $g=0$, $n=3$. We plan to deepen in these questions in the future.
\end{remark}
\begin{proof}[Proof of Theorem A]
Attached to $(\XX\rightarrow\BS;\sigma_{1},\ldots,\sigma_{n})$ there is a classifying morphism
\begin{displaymath}
	\C:\BS\rightarrow\SCM_{g,n}.
\end{displaymath}
For every embedding $\tau\in\Sigma$, $\XX_{\tau}=\XX\times_{\tau}\CC$ is smooth. Hence there is a commutative diagram
\begin{equation}\label{equation:29}
	\xymatrix{
		&\Spec\CC\ar[dd]^{\tau}\ar[rd]\ar@{-->}[r]^{\C_{\tau}}	&\SM_{g,n}\ar@{^{(}->}[d]\\
		&	&\SCM_{g,n}.\\
		&\BS\ar[ru]_{\C}	&
	}
\end{equation}
Let $\C^{\ast}\DD_{g,n}$ be the pull-back of the Mumford isomorphism $\DD_{g,n}$ by $\C$. From (\ref{equation:29}) we infer the equality $\tau^{\ast}\C^{\ast}\DD_{g,n}=\C_{\tau}^{\ast}\DD_{g,n}^{\circ}$ (recall that $\DD_{g,n}^{\circ}:=\DD_{g,n}\mid_{\SM_{g,n}}$). Therefore Theorem \ref{theorem:Mumford_isomorphism} and Theorem \ref{theorem:Mumford_isometry} altogether yield an isometry of hermitian line bundles on $\BS$
\begin{equation}\label{equation:30}
	\begin{split}
	\overline{\C^{\ast}\DD_{g,n}}:&\lambda(\omega_{\XX/\BS})_{Q}^{\otimes 12}\otimes\OO(\Delta_{\XX/\BS})^{-1}\otimes\psi_{W}\overset{\sim}{\longrightarrow}\\
	&\hspace{1.2cm}\langle\omega_{\XX/\BS}(\sigma_{1}+\ldots+\sigma_{n})_{\hyp},\omega_{\XX/\BS}(\sigma_{1.2}+\ldots+\sigma_{n})_{\hyp}\rangle\\
	&\hspace{1.2cm}\otimes\OO(C(g,n)).
	\end{split}
\end{equation}
The theorem is now obtained from (\ref{equation:30}) applying $\ac1$.
\end{proof}
We close this section with a further application of Theorem \ref{theorem:Mumford_isometry}: a  significant case of the Takhtajan-Zograf local index theorem \cite{ZT:ZT_metric_0}--\cite{ZT:ZT_metric}.
\begin{theorem}[Takhtajan-Zograf]\label{theorem:ZT}
Let $\omega_{WP}$, $\omega_{TZ}$ be the Weil-Petersson and Takhtajan-Zograf K\"ahler forms on $\SM_{g,n}^{\an}$, respectively. The following equality of differential forms on $\SM_{g,n}^{\an}$ holds:
\begin{equation}\label{equation:30_a}
	\c1(\lambda_{g,n;Q})=\frac{1}{12\pi^{2}}\omega_{WP}-\frac{1}{9}\omega_{TZ}.
\end{equation}
\end{theorem}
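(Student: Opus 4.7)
The plan is to take first Chern forms in the metrized Mumford isomorphism of Theorem \ref{theorem:Mumford_isometry} and to identify each resulting term with the appropriate Kähler form on $\SM_{g,n}^{\an}$. Since $\OO(C(g,n))$ carries a constant metric, its first Chern form vanishes; applying $\c1$ to the isometry $\lambda_{g,n;Q}^{\otimes 12}\otimes\psi_{g,n;W}\overset{\sim}{\longrightarrow}\overline{\kappa}_{g,n}\otimes\OO(C(g,n))$ therefore yields the pointwise identity
$$12\,\c1(\lambda_{g,n;Q})+\c1(\psi_{g,n;W})=\c1(\overline{\kappa}_{g,n})$$
of smooth $(1,1)$-forms on $\SM_{g,n}^{\an}$.

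I would then identify the two tautological terms separately. By the definition of the Deligne pairing, $\c1(\overline{\kappa}_{g,n})$ is the fiber integral $\pi_{*}(\c1(\omega_{\SCC_{g,n}/\SCM_{g,n}}(\sigma_{1}+\ldots+\sigma_{n})_{\hyp})^{2})$; once the integrability of the singular part at the punctures is checked (which is the content of the pre-log-log formalism of \cite{BKK}), Wolpert's fiber integral formula for the Weil--Petersson form on $\SM_{g,n}^{\an}$ identifies this fiber integral with $\omega_{WP}/\pi^{2}$, giving $\c1(\overline{\kappa}_{g,n})=\frac{1}{\pi^{2}}\omega_{WP}$. For the $\psi$ term, the curvature of the Wolpert metric on $\sigma_{j}^{\ast}\omega_{\SCC_{g,n}/\SCM_{g,n}}$ is controlled by the constant term of the hyperbolic metric at the $j$-th cusp. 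This constant term is exactly the object computed by the Eisenstein series attached to the cusp, and Takhtajan--Zograf's construction of $\omega_{TZ}$ via such Eisenstein series yields the normalization $\c1(\psi_{g,n;W})=\frac{4}{3}\omega_{TZ}$ (summed over the $n$ cusps).

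Substituting both identifications into the Chern-form equation and dividing by $12$ gives
$$\c1(\lambda_{g,n;Q})=\frac{1}{12\pi^{2}}\omega_{WP}-\frac{1}{9}\omega_{TZ},$$
which is the claim. The main obstacle is not conceptual but bookkeeping: the two relations $\c1(\overline{\kappa}_{g,n})=\omega_{WP}/\pi^{2}$ and $\c1(\psi_{g,n;W})=\frac{4}{3}\omega_{TZ}$ must be verified with the precise normalizations fixed in Section \ref{section:conventions}, namely the $i/(2\pi)$ factor in the pairing $\langle\cdot,\cdot\rangle_{0}$, the curvature $-1$ normalization of $ds_{\hyp}^{2}$, and the definition of the Wolpert metric via rs coordinates (Definition \ref{definition:Wolpert_metric}). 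Once these constants are pinned down by tracking a single test curve through Wolpert's pinching expansion, the theorem follows at once from Theorem \ref{theorem:Mumford_isometry}.
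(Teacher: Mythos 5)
Your proposal is correct and follows exactly the paper's own argument: take first Chern forms in the metrized Mumford isometry of Theorem \ref{theorem:Mumford_isometry}, note that $\OO(C(g,n))$ contributes nothing, and substitute the two normalization identities $\c1(\overline{\kappa}_{g,n})=\frac{1}{\pi^{2}}\omega_{WP}$ and $\c1(\psi_{g,n;W})=\frac{4}{3}\omega_{TZ}$, which the paper likewise imports from Wolpert (and the author's thesis for the pointed case). The only cosmetic difference is that the paper simply cites these two identities as known results, whereas you sketch where they come from; the arithmetic $12^{-1}\cdot\frac{4}{3}=\frac{1}{9}$ is correct.
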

\begin{proof}
First of all we have the equality of differential forms on $\SM_{g,n}^{\an}$
\begin{equation}\label{equation:30_b}
	\c1(\overline{\kappa}_{g,n})=\frac{1}{\pi^{2}}\omega_{WP}.
\end{equation}
For a reference see \cite{Wolpert:hyperbolic} (case $n=0$) and \cite[Ch. 5]{GFM:thesis} (general case). Secondly, by \cite[Th. 5]{Wolpert:cusps}, there is another identity of differential forms on $\SM_{g,n}^{\an}$
\begin{equation}\label{equation:30_c}
	\c1(\psi_{g,n;W})=\frac{4}{3}\omega_{TZ}.
\end{equation}
The relation (\ref{equation:30_a}) is deduced by conjunction of Theorem \ref{theorem:Mumford_isometry} and (\ref{equation:30_b})--(\ref{equation:30_c}).
\end{proof}
\begin{remark}
In contrast with \cite[Fund. th., p. 278] {Weng}, our proof of (\ref{equation:30_a}) is new and does not require the work of Takhtajan-Zograf.
\end{remark}
\section{The special values $Z^{\prime}(Y(\Gamma),1)$ and $L(0,\SM_{\Gamma})$}\label{section:TheoremB}
The aim of this section is to proof Theorem B. The argument relies on Theorem A and a formula of Bost \cite{Bost} and K\"uhn \cite{Kuhn} for the arithmetic self-intersection number of $\omega_{X_{1}(p)/\QQ(\mu_{p})}(\text{cusps})_{\hyp}$.

Fix $K=\QQ(\mu_{p})\subset\CC$ the $p$-th cyclotomic field. Denote by $\iota$ the inclusion $K\subset\CC$ and by $\overline{\iota}$ its complex conjugate. Then $\mathcal{A}=(\Spec K,\Sigma:=\lbrace\iota,\overline{\iota}\rbrace,F_{\infty})$ is an arithmetic ring. The arithmetic Chow group $\ACH^{1}(\Spec K)$ --associated to $\mathcal{A}$-- comes equipped with an arithmetic degree map:
\begin{equation}\label{equation:35}
	\begin{split}
		\adeg:\ACH^{1}(\Spec K)&\longrightarrow\RR/\log|K^{\times}|\\
		[(0,\lbrace\lambda_{\iota},\lambda_{\overline{\iota}}\rbrace)]&\longmapsto 
		[\frac{1}{2}(\lambda_{\iota}+\lambda_{\overline{\iota}})]=[\lambda_{\iota}]\,\,\,
		(\text{since}\,\,\,\lambda_{\iota}=\lambda_{\overline{\iota}}).
	\end{split}
\end{equation}
If $\overline{L}=(L,\|\cdot\|)$ is a metrized line over $\Spec K$, then the first arithmetic Chern class $\ac1(\overline{L})\in\ACH^{1}(\Spec K)$ is the class of $(0,\lbrace\log\|s_{\sigma}\|_{\sigma}^{-2}\rbrace_{\sigma\in\Sigma})$, for any non-vanishing section $s$ of $L$. Therefore, taking (\ref{equation:35}) into account,
\begin{equation}\label{equation:36}
	\adeg\ac1(\overline{L})=[\log\|s_{\iota}\|_{\iota}^{-2}],
\end{equation}
which is independent of the choice of $s$.

Let $\QQQ$ be the algebraic closure of $\QQ$ in $\CC$. We will write $|\QQQ^{\times}|$ for the group of norms of elements of $\QQQ^{\times}$.

The modular curve $X(\Gamma)$, for $\Gamma=\Gamma_{0}(p)$ or $\Gamma_{1}(p)$, and its cusps are defined over the number field $K$ \cite[Ch. 6.7]{Shimura:autom}. We still write $X(\Gamma)$ for a projective model over $K$. The notations $X_{0}(p)$ and $X_{1}(p)$ will also be employed.\footnote{Contrary to the usual conventions, here $X_{0}(p)$ is assumed to be defined over $K$, and not over $\QQ$.}
\begin{proposition}\label{proposition:thmB_1}
Let $p$ be a prime number for which $X(\Gamma)$ has genus $g\geq 1$. Then the equality
\begin{displaymath}
	\adeg\ac1(\lambda(\omega_{X(\Gamma)/K}),\|\cdot\|_{L^2})=-\log(\pi^{-2g}L(0,\SM_{\Gamma}))
\end{displaymath}
holds in $\RR/\log|\QQQ^{\times}|$.
\end{proposition}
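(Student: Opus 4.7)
The plan is to evaluate the $L^{2}$-norm of an explicit $K$-rational section of $\lambda(\omega_{X(\Gamma)/K})$ built from a basis of Hecke eigenforms, and then invoke the Rankin--Selberg/Shimura identity to convert the resulting Petersson norms into special values of symmetric-square $L$-functions. Via the classical isomorphism $S_{2}(\Gamma)\overset{\sim}{\to}H^{0}(X(\Gamma),\omega_{X(\Gamma)/\CC})$ sending $f\mapsto\alpha_{f}:=f(z)\,dz$, each newform $f\in\mathrm{Prim}_{2}(\Gamma)$ yields a differential defined over a finite extension $F_{f}\supset K$; since $\mathrm{Prim}_{2}(\Gamma)$ is stable under $\mathrm{Gal}(\overline{K}/K)$, the wedge
\begin{displaymath}
s:=\Big(\bigwedge_{f\in\mathrm{Prim}_{2}(\Gamma)}\alpha_{f}\Big)\otimes\mathbf{1}^{\vee}\in\lambda(\omega_{X(\Gamma)/K})
\end{displaymath}
is $K$-rational up to $\QQQ^{\times}$, with $\mathbf{1}^{\vee}$ denoting the image of the canonical generator $\mathbf{1}$ of $H^{0}(X(\Gamma),\OO)$ under Serre duality. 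By (\ref{equation:36}), modulo $\log|\QQQ^{\times}|$ one has $\adeg\ac1(\lambda(\omega_{X(\Gamma)/K}),\|\cdot\|_{L^{2}})\equiv-\log\|s\|_{L^{2}}^{2}$, so it suffices to compute $\|s\|_{L^{2}}^{2}$ modulo $\QQQ^{\times}$.

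A short computation using $dz\wedge d\overline{z}=-2i\,dx\wedge dy$ gives $\langle\alpha_{f},\alpha_{g}\rangle_{0}=\pi^{-1}\langle f,g\rangle_{\mathrm{Pet}}$, where $\langle f,g\rangle_{\mathrm{Pet}}=\int_{\Gamma\backslash\HH}f\overline{g}\,dx\,dy$ is the Petersson inner product for weight $2$. Distinct normalized eigenforms are Petersson-orthogonal, so the Gram matrix in the basis $\{\alpha_{f}\}$ is diagonal, and since $\langle\mathbf{1},\mathbf{1}\rangle_{1}=2g-2+n\in\QQ^{\times}$ is rational, one obtains
\begin{displaymath}
\|s\|_{L^{2}}^{2}\equiv\pi^{-g}\prod_{f\in\mathrm{Prim}_{2}(\Gamma)}\langle f,f\rangle_{\mathrm{Pet}}\pmod{\QQQ^{\times}}.
\end{displaymath}

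The core input is the Rankin--Selberg/Shimura identity. For a normalized newform $f$ of weight $2$ with nebentypus $\chi_{f}$, Rankin--Selberg unfolding, combined with the identity $L(s,f\otimes\overline{f}\otimes\overline{\chi_{f}})=\zeta(s-1)L(s,\mathrm{Sym}^{2}f,\overline{\chi_{f}})$ and taking the residue at $s=2$, yields
\begin{displaymath}
\langle f,f\rangle_{\mathrm{Pet}}=\mathrm{vol}(\Gamma\backslash\HH)\cdot(4\pi)^{-2}L(2,\mathrm{Sym}^{2}f,\overline{\chi_{f}}).
\end{displaymath}
Because $\mathrm{vol}(\Gamma\backslash\HH)=(\pi/3)[\PSL_{2}(\Int):\Gamma]\in\QQQ^{\times}\cdot\pi$, this gives $\langle f,f\rangle_{\mathrm{Pet}}\sim_{\QQQ^{\times}}\pi^{-1}L(2,\mathrm{Sym}^{2}f,\overline{\chi_{f}})$. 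Taking the product over $f\in\mathrm{Prim}_{2}(\Gamma)$ and using (\ref{equation:-1}) gives
\begin{displaymath}
\|s\|_{L^{2}}^{2}\equiv\pi^{-2g}L(0,\SM_{\Gamma})\pmod{\QQQ^{\times}},
\end{displaymath}
and taking $-\log$ yields the claim. The substantive point is the clean compensation between the $(4\pi)^{-2}$ from the archimedean $\Gamma$-factor and the $\pi$ from the hyperbolic volume in Rankin--Selberg, producing exactly the $\pi^{-2g}$ in the statement; the remaining points---Galois-stability of $\mathrm{Prim}_{2}(\Gamma)$ giving $K$-rationality of the wedge, Hecke orthogonality, and the Serre-duality normalization of $\mathbf{1}^{\vee}$---are standard bookkeeping.
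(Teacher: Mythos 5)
Your strategy is the same as the paper's: compute the $L^2$-norm of a $K$-rational section of $\lambda(\omega_{X(\Gamma)/K})$, reduce to products of Petersson norms via Hecke orthogonality, then convert to $L$-values. But your proof as written has two compensating $\pi$-power errors, so the correct final answer is reached by accident rather than by a valid chain of reasoning.

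First, $\alpha_f:=f(z)\,dz$ is \emph{not} $K$-rational (nor $\QQQ$-rational). The $K$-structure on $H^0(X(\Gamma),\omega_{X(\Gamma)/K})$ is read off from the $q$-expansion; since $q=e^{2\pi iz}$ one has $dq/q=2\pi i\,dz$, so the $K$-rational differential attached to $f$ is $\omega_f:=2\pi i\,f(z)\,dz$, which is what the paper works with. Your $s$ therefore differs from a $K$-rational section by $(2\pi i)^{-g}$, and the claim \textquotedblleft $K$-rational up to $\QQQ^{\times}$\textquotedblright\ fails because $2\pi\notin\QQQ^{\times}$. Using $\omega_f$ instead of $\alpha_f$ (with $\langle\omega_f,\omega_f\rangle_0=4\pi\langle f,f\rangle$) yields $\adeg\ac1\equiv-\log\bigl(\pi^{g}\prod_f\langle f,f\rangle\bigr)$ mod $\log|\QQQ^{\times}|$ --- this is the paper's equation (\ref{equation:37}) --- rather than your $-\log\bigl(\pi^{-g}\prod_f\langle f,f\rangle\bigr)$; you are off by $\pi^{2g}$.

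Second, your Rankin--Selberg identity $\langle f,f\rangle_{\mathrm{Pet}}=\mathrm{vol}(\Gamma\backslash\HH)\cdot(4\pi)^{-2}L(2,\mathrm{Sym}^2f,\overline{\chi_f})$ is missing a $\zeta_{N}(2)^{-1}$. The unfolded integral produces the residue of the Dirichlet series $D(s,f,f)=\sum|a_n|^2 n^{-s}$, not of the Rankin--Selberg $L$-function; since $D(s,f,f)=L(s,f\times\overline{f})/\zeta_{N}(2s-2)$, one gets $\mathrm{Res}_{s=2}D(s,f,f)\sim_{\QQQ^{\times}}\zeta(2)^{-1}L(2,\mathrm{Sym}^2f,\overline{\chi_f})\sim_{\QQQ^{\times}}\pi^{-2}L(2,\mathrm{Sym}^2f,\overline{\chi_f})$. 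With this correction, $\langle f,f\rangle\sim_{\QQQ^{\times}}\pi^{-3}L(2,\mathrm{Sym}^2f,\overline{\chi_f})$, which is exactly Hida's Theorem 5.1, the input the paper cites as equation (\ref{equation:38}); it is not the $\pi^{-1}L(2,\ldots)$ you assert, which is off by $\pi^{2}$ per eigenform.

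The two errors are off by $\pi^{2g}$ and $\pi^{-2g}$ respectively and cancel exactly, which is why your final display matches the statement. The paper avoids both issues by working with the genuinely $K$-rational $\omega_f$ and by invoking Hida's theorem rather than re-deriving the Petersson/$L$-value relation; if you want a self-contained Rankin--Selberg argument here, you must carry the $\zeta_N(2s-2)$ denominator.
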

\begin{proof}[(see also \cite{Ullmo})]
First recall that via the $q$-expansion, $H^{0}(X(\Gamma),\omega_{X(\Gamma)/K})$ gets identified with the space of weight 2 cusp forms in $S_{2}(\Gamma,\CC)$ whose Fourier series expansion at $\infty$ have coefficients in $K$ \cite[Th. 1.33]{Darmon}. Let us write $S_{2}(\Gamma,K)$ for this space. Notice that the set $\text{Prim}_{2}(\Gamma)\subset S_{2}(\Gamma,K)\otimes_{K}\QQQ$ is a $\QQQ$-basis. For every $f\in\text{Prim}_{2}(\Gamma)$, write $\omega_{f}\in H^{0}(X(\Gamma),\omega_{X(\Gamma)/K})\otimes_{K}\CC$ for the corresponding differential form. Viewing $\omega_{f}$ as a holomorphic form, we find the relation
\begin{displaymath}
	4\pi\langle f,f\rangle=\frac{i}{2\pi}\int_{X(\Gamma)}\omega_{f}\wedge\overline{\omega_{f}},
\end{displaymath}
where $\langle f,f\rangle$ is the Petersson square norm of $f$. Indeed, $\omega_{f}$ pulls back to the $\Gamma$ invariant tensor $2\pi i f(z)dz$ on $\HH$. By the definition of the $L^{2}$ metric $\|\cdot\|_{L^{2}}$ (see Section \ref{section:conventions}) and (\ref{equation:36}), we infer the equality in $\RR/\log|\QQQ^{\times}|$
\begin{equation}\label{equation:37}
	\adeg\ac1(\lambda(\omega_{X(\Gamma)/K}),\|\cdot\|_{L^2})=-\log(\pi^{g}\prod_{f\in\text{Prim}_{2}(\Gamma)}\langle f,f\rangle).
\end{equation}
By \cite[Th. 5.1]{Hida} we have
\begin{equation}\label{equation:38}
	\prod_{f\in\text{Prim}_{2}(\Gamma)}\langle f,f\rangle\sim_{\QQQ^{\times}_{+}}\pi^{-3g}\prod_{f\in\text{Prim}_{2}(\Gamma)}L(2,\text{Sym}^{2}f,\overline{\chi_{f}}).
\end{equation}
The equations (\ref{equation:-1}) (see the Introduction), (\ref{equation:37}) and (\ref{equation:38}) altogether lead to the conclusion.
\end{proof}
\begin{lemma}\label{lemma:thmB}
Suppose that $p$ is a prime number congruent to $11$ modulo $12$. The following assertions hold:

i. the natural morphism $\varphi:X_{1}(p)\rightarrow X_{0}(p)$ is unramified as a morphism of $K$-schemes;

ii. $\varphi$ induces an isometry of pre-log-log hermitian line bundles
\begin{displaymath}
	\varphi^{\ast}(\omega_{X_{0}(p)/K}((0)+(\infty))_{\hyp})\overset{\sim}{\longrightarrow}\omega_{X_{1}(p)/K}(\cusps)_{\hyp};
\end{displaymath}

iii. if $\widetilde{\sigma}\in X_{1}(p)(K)$ is a cusp lying over $\infty\in X_{0}(p)(K)$ (resp. $0$), then $\varphi$ induces an isometry of hermitian line bundles
\begin{displaymath}
	\sigma_{\infty}^{\ast}(\omega_{X_{0}(p)/K})_{W}\overset{\sim}{\longrightarrow}\widetilde{\sigma}^{\ast}(\omega_{X_{1}(p)/K})_{W},
\end{displaymath}
where $\sigma_{\infty}$ is the section $\infty$ (resp. $\sigma_{0}$).
\end{lemma}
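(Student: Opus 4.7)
The plan is to derive all three assertions from the single fact that $\varphi$ is étale, whose proof crucially uses the hypothesis $p \equiv 11 \mod 12$.

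First, I would establish (i). The curve $X_0(p)_{\CC}$ has elliptic points of order 2 (resp.\ 3) if and only if $-1$ (resp.\ $-3$) is a square mod $p$, i.e.\ $p \equiv 1 \mod 4$ (resp.\ $p \equiv 1 \mod 3$). Under the hypothesis, neither congruence holds, so $\Gamma_{0}(p)$ acts freely on $\HH$ and the analytic map $Y_{1}(p)^{\an} \to Y_{0}(p)^{\an}$ is an étale cover of Riemann surfaces. At the cusps, a direct matrix computation shows that the stabilizers of $\infty$ (resp.\ of $0$) in $\Gamma_0(p)$ and in $\Gamma_1(p)$ coincide, both being $\langle T \rangle$ in $\PSL_2(\RR)$; hence the cusp widths agree and $\varphi$ is unramified at the cusps as well. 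Since $X_1(p)$ and $X_0(p)$ are smooth projective $K$-curves, a faithfully flat descent / GAGA argument then turns analytic unramifiedness into scheme-theoretic unramifiedness, yielding (i).

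For (ii), étaleness gives Riemann--Hurwitz in the trivial form $\omega_{X_{1}(p)/K} \simeq \varphi^{\ast}\omega_{X_{0}(p)/K}$. Counting shows that $\varphi^{\ast}((0)+(\infty))$ equals the sum of all cusps of $X_1(p)$ with multiplicity one: the degree of $\varphi$ is $(p-1)/2$, each of the two cusps of $X_0(p)$ has an unramified fiber of that size, and $X_1(p)$ has exactly $p-1$ cusps. Tensoring supplies the underlying isomorphism of line bundles. For the hermitian structure, observe that both $ds^2_{\hyp,Y_0(p)}$ and $ds^2_{\hyp,Y_1(p)}$ are descents of the $\PSL_2(\RR)$-invariant metric on $\HH$ via the compatible uniformizations $\HH \to Y_1(p)^{\an} \to Y_0(p)^{\an}$; therefore $\varphi^{\an}$ restricted to $Y_1(p)^{\an}$ is a local isometry. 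Consequently the dual hyperbolic metrics on $\omega_{X_{i}(p)}(\cusps)$ match after pullback over $\UU$, and they extend uniquely to pre-log-log metrics by \cite[Sec.\ 7]{BKK}.

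For (iii), recall that an \emph{rs} coordinate $z$ at a cusp $p$ is characterized (up to rotation) by the local identification $ds^{2}_{\hyp} = |dz|^{2}/(|z|\log|z|)^{2}$ in a punctured neighborhood. Since $\varphi^{\an}$ is étale at $\widetilde\sigma$ and a local isometry for the hyperbolic metrics, the pullback of an \emph{rs} coordinate at $\infty$ (resp.\ $0$) is an \emph{rs} coordinate at $\widetilde\sigma$. The Wolpert metric assigns norm one to $dz$ by definition, so $\varphi^{\ast}\|dz\|_{W} = \|\varphi^{\ast}dz\|_{W} = 1$, giving the stated isometry.

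The main obstacle is really only in (i): distinguishing the stack-theoretic étaleness of $[\Gamma_1(p)\backslash\HH] \to [\Gamma_0(p)\backslash\HH]$ (which is automatic) from the scheme-theoretic étaleness of the induced map on coarse moduli. The hypothesis $p \equiv 11 \mod 12$ is precisely what makes the two notions coincide by killing all elliptic points, and once this is established parts (ii) and (iii) are purely formal consequences of the local isometry property.
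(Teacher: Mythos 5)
Your proof is correct and takes essentially the same approach as the paper: the paper simply cites Mazur's ramification table for (i) and states that (ii)--(iii) follow easily, while you reproduce the underlying content of that table (the hypothesis $p\equiv 11\bmod 12$ kills the elliptic points of orders $2$ and $3$ on $X_0(p)$, the stabilizer/degree count at the cusps, and the descent to $K$-schemes) and spell out the local-isometry argument for the hyperbolic and Wolpert metrics. The one place to be careful in presentation is the observation that the étaleness argument requires $\Gamma_0(p)$ to be torsion-free, which is exactly what the congruence buys you; otherwise the hyperbolic metric on $\HH$ would not descend to the hyperbolic metric on $Y_0(p)^{\an}$ and the local-isometry claim in (ii)--(iii) would fail.
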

\begin{proof}
The first assertion is derived from \cite[Ch. 2, Sec. 2, Table I]{Mazur}. Then properties \textit{ii}--\textit{iii} are easily checked as a consequence of \textit{i}.
\end{proof}
\begin{proposition}\label{proposition:thmB_2}
Let $p\geq 11$ be a prime number. Assume that $p\equiv 11\mod 12$ whenever $\Gamma=\Gamma_{0}(p)$. Let $\psi_{W}$ be the tensor product of the cotangent bundles at the cusps of $X(\Gamma)_{K}$, endowed with the Wolpert metric. Then the equality
\begin{displaymath}
	\adeg\ac1(\psi_{W})=0
\end{displaymath}
holds in $\RR/\log|K^{\times}|$.
\end{proposition}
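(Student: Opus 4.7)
The plan is to exhibit, at every cusp $c$ of $X(\Gamma)_{K}$, a $K$-rational formal local parameter that is simultaneously an \textit{rs}-coordinate for the hyperbolic metric on $Y(\Gamma)^{\an}$. Once this is achieved, the tensor product of the corresponding differentials is a $K$-rational generator of $\psi_{W}$ with Wolpert norm exactly $1$ by Definition~\ref{definition:Wolpert_metric}, yielding immediately $\adeg\ac1(\psi_{W})=0$ in $\RR/\log|K^{\times}|$.

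First I would observe that under either set of hypotheses the group $\Gamma$ is torsion-free in $\PSL_{2}(\RR)$: for $\Gamma_{1}(p)$ this is classical since $p\geq 5$, and for $\Gamma_{0}(p)$ the condition $p\equiv 11\mod 12$ is exactly what excludes elliptic points of orders $2$ and $3$. Consequently $Y(\Gamma)^{\an}$ has no elliptic points and its smooth hyperbolic metric coincides with the quotient of $|d\tau|^{2}/(\Imag\tau)^{2}$ by $\Gamma$. A direct change of variable $q=e^{2\pi i\tau}$ converts this metric into $|dq|^{2}/(|q|\log|q|)^{2}$, so $q$ is an \textit{rs}-coordinate at the cusp $\infty$.

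On the algebraic side, the Tate-curve construction of Katz--Mazur (resp.\ Deligne--Rapoport) identifies the formal completion of $X(\Gamma)_{K}$ at the standard cusp $\infty$ with the formal spectrum of $K[[q]]$: over $\Int$ for $\Gamma_{0}(p)$, and over $\Int[\zeta_{p},1/p]\subset K$ for $\Gamma_{1}(p)$ at the cusp attached to $(\Gm/q^{\Int},\zeta_{p})$. The algebraic Tate parameter coincides with the analytic $q$ by the compatibility of the construction with the complex-analytic uniformization.

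To reach the remaining cusps I would transport by means of $K$-rational holomorphic automorphisms of $Y(\Gamma)$, which act as isometries of the hyperbolic metric by uniqueness of the complete constant-curvature $-1$ metric. For $\Gamma_{0}(p)$ the Atkin--Lehner involution $w_{p}:\tau\mapsto -1/(p\tau)$ is $\QQ$-rational and sends $0\leftrightarrow\infty$, so $w_{p}^{\ast}q$ is a $\QQ$-rational \textit{rs}-coordinate at $0$. For $\Gamma_{1}(p)$, the cusps above $\infty\in X_{0}(p)$ form a single orbit under the $\QQ$-rational diamond operators $\langle d\rangle$ ($d\in(\Int/p)^{\times}$), and a Fricke-type involution defined over $K$ (via the Weil pairing) interchanges these with the cusps above $0$. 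In every case the transported parameter is $K$-rational and, as the pullback of an \textit{rs}-coordinate by a hyperbolic isometry, is itself an \textit{rs}-coordinate.

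The principal subtlety will be the precise identification, at the standard cusp, between the analytic variable $e^{2\pi i\tau}$ and the algebraic Tate parameter from Katz--Mazur. In fact one needs only agreement up to a factor in $K^{\times}$: such a factor contributes a term in $\log|K^{\times}|$ which vanishes in the quotient $\RR/\log|K^{\times}|$. Granting this, $\bigotimes_{c}dq_{c}$ is a $K$-rational generator of $\psi_{W}$ whose Wolpert norm lies in $|K^{\times}|$, which finishes the proof.
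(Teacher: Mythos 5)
Your proposal is correct, and it takes a genuinely cleaner route than the paper's. The paper works with $K$-rational global sections of $\omega_{X(\Gamma)/K}$ (cusp forms with $a_{1}\neq 0$) and computes their Wolpert norms at the cusps; you instead produce a $K$-rational local parameter at each cusp and argue directly that it is an \textit{rs}-coordinate. Both arguments ultimately rest on the compatibility between the algebraic $q$-parameter (Tate parameter, resp.\ algebraic $q$-expansion) and the analytic variable $e^{2\pi i\tau}$, which you rightly identify as the only real subtlety, and both use the Atkin--Lehner involution and, for $\Gamma_{1}(p)$, the diamond operators to move to the remaining cusps. Where you genuinely improve on the paper is the observation that the hypothesis $p\equiv 11\pmod{12}$ forces $\Gamma_{0}(p)$ to be torsion-free in $\PSL_{2}(\RR)$: $p\equiv 3\pmod 4$ kills the order-$2$ elliptic points and $p\equiv 2\pmod 3$ kills the order-$3$ ones, so the hyperbolic metric on $Y_{0}(p)^{\an}$ does descend from $\HH$ and the detour through $X_{1}(p)$ is unnecessary. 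The paper's footnote to equation (\ref{equation:39_1}) asserts the opposite (``since $\Gamma_{0}(p)$ has elliptic fixed points\ldots''), which is inaccurate under the standing congruence hypothesis; in fact Lemma~\ref{lemma:thmB}(i), which the paper uses a few lines later, is precisely equivalent to the torsion-freeness you exploit, so this is an internal inconsistency in the paper rather than an obstacle for you. The facts you defer to references (compatibility of the Tate parameter with the complex uniformization, $K$-rationality of the Fricke involution and diamonds on $X_{1}(p)$) are standard and do hold; filling them in completes a valid proof.
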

\begin{proof}
We begin with the more delicate case $\Gamma=\Gamma_{0}(p)$. Let $\sigma_{0}$ and $\sigma_{\infty}$ be the sections of $X_{0}(p)$ induced by the cusps $0,\infty$, respectively. We first show that $\adeg\sigma_{\infty}^{\ast}(\omega_{X_{0}(p)/K})=0$. Let $f\in S_{2}(\Gamma_{0}(p),K)$ be a cusp form whose $q$-expansion has leading coefficient $a_{1}\neq 0$. It exists because $\text{Prim}_{2}(\Gamma)$ is a base of $S_{2}(\Gamma_{0}(p),K)\otimes_{K}\QQQ$. Let $\theta$ be the global section of $\omega_{X_{0}(p)/K}$ corresponding to $f$ via the $q$-expansion. We claim that
\begin{equation}\label{equation:39_1}
	\|\sigma_{\infty}^{\ast}\theta\|_{W,\infty,\iota}=|a_{1}|\in |K^{\times}|.
\end{equation}
It shall be emphasized that (\ref{equation:39_1}) is not a formal consequence of the definition of the Wolpert metric: since $\Gamma_{0}(p)$ has elliptic fixed points, the hyperbolic metric on $\HH$ does not descend to the hyperbolic metric of $X_{0}(p)_{\CC}\setminus \lbrace 0,\infty\rbrace$.\footnote{The hyperbolic metric on $X_{0}(p)_{\CC}\setminus\lbrace 0,\infty\rbrace$ is smooth at the image of the elliptic fixed points, while the \textquotedblleft descended\textquotedblright\, metric is not \cite[Par. 4.2]{Kuhn}.} However we may pull-back $\theta$ to $X_{1}(p)$, where the hyperbolic metric on $X_{1}(p)_{\CC}\setminus\lbrace\text{cusps}\rbrace$ is obtained by descend from $\HH$. By the lemma we have
\begin{displaymath}
	\|\sigma_{\infty}^{\ast}\theta\|_{W,\infty,\iota}=\|\widetilde{\sigma}_{\infty}^{\ast}\varphi^{\ast}\theta\|_{W,\infty,\iota},
\end{displaymath}
where $\widetilde{\sigma}_{\infty}$ is the section of $X_{1}(p)$ induced by the cusp $\infty\in X_{1}(p)$. The restriction of the differential form $(\varphi^{\ast}\theta)_{\CC}$ to $X_{1}(p)_{\CC}\setminus\lbrace\text{cusps}\rbrace$ lifts to the upper half plane as
\begin{displaymath}
	f(z)=\sum_{n\geq 1}a_{n}q^{n}\frac{dq}{q},\,\,\,q=e^{2\pi i z}.
\end{displaymath}
By the very definition of the Wolpert metric we find
\begin{displaymath}
	\|\widetilde{\sigma}_{\infty}^{\ast}\varphi^{\ast}\theta\|_{W,\infty,\iota}=|a_{1}|,
\end{displaymath}
thus proving the claim. Notice that by (\ref{equation:36}), equation (\ref{equation:39_1}) entails
\begin{equation}\label{equation:39_2}
	\adeg\sigma_{\infty}^{\ast}(\omega_{X_{0}(p)/K})_{W}=[\log|a_{1}|^{-2}]=0\,\,\,\text{in}\,\,\,\RR/\log|K^{\times}|.
\end{equation}
Now we turn our attention to $\adeg\sigma_{0}^{\ast}(\omega_{X_{0}(p)/K})_{W}$. Consider the Atkin-Lehner involution $w_{p}:X_{0}(p)\rightarrow X_{0}(p)$ (see \cite[IV, (3.16)]{DR} and \cite[Ch. 2, Sec. 6, Par. 1]{Mazur}). The pull-back $w_{p}^{\ast}\theta$ is a global section of $\omega_{X_{0}(p)/K}$. From \cite[VII, (3.18)]{DR} and \cite[Ch. 2, Sec. 6, Par. 1]{Mazur}, the first coefficient of the $q$-expansion of $w_{p}^{\ast}\theta$ at the cusp $0$ is $a_{1}/p$. The same argument as above shows
\begin{displaymath}
	\|\sigma_{0}^{\ast}w_{p}^{\ast}\theta\|_{W,0,\iota}=|a_{1}|/p\in |K^{\times}|
\end{displaymath}
and hence
\begin{equation}\label{equation:39_3}
	\adeg\sigma_{0}^{\ast}(\omega_{X_{0}(p)/K})_{W}=0\,\,\,\text{in}\,\,\,\RR/\log|K^{\times}|.
\end{equation}
Equations (\ref{equation:39_2}) and (\ref{equation:39_3}) lead to the conclusion.

The argument for $\Gamma=\Gamma_{1}(p)$ is analogous. A comment is in order: to produce a global section of $\omega_{X_{1}(p)/K}$ not vanishing at a prescribed cusp, besides the Atkin-Lehner involution we need the diamond operators $\langle d\rangle:X_{1}(p)\rightarrow X_{1}(p)$, for $d\in(\Int/p\Int)^{\times}/\lbrace\pm 1\rbrace$.\footnote{The diamond operators constitute the Galois group of $X_{1}(p)\rightarrow X_{0}(p)$. This morphism is unramified at the cusps \cite[Sec. 2]{Mazur}. Hence the group generated by the Atkin-Lehner involution and the diamond operators acts transitively on the cusps of $X_{1}(p)$.} This completes the proof.
\end{proof}
\begin{proposition}\label{proposition:thmB_3}
Let $p\geq 11$ be a prime number and suppose that $p\equiv 11\mod 12$ whenever $\Gamma=\Gamma_{0}(p)$. The equality
\begin{displaymath}
		\adeg\pi_{\ast}(\ac1(\omega_{X(\Gamma)/K}(\cusps)_{\hyp})^{2})=
		4[\Gamma(1):\Gamma](2\zeta^{\prime}(-1)+\zeta(-1))
\end{displaymath}
holds in $\RR/\log|\QQQ^{\times}|$.
\end{proposition}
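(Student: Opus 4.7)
The strategy is to reduce to the Bost--K\"uhn computation on $X_{1}(p)$ and then transfer the result to $X_{0}(p)$ via the natural finite covering $\varphi: X_{1}(p) \to X_{0}(p)$ when needed. For $\Gamma = \Gamma_{1}(p)$, I would invoke directly the explicit formula of Bost \cite{Bost} and K\"uhn \cite{Kuhn} for the arithmetic self-intersection number of the log-canonical sheaf on $X_{1}(p)/K$ with the cusps as boundary, endowed with the hyperbolic metric. Their computation yields
\[
\adeg \pi_{\ast}\bigl(\ac1(\omega_{X_{1}(p)/K}(\cusps)_{\hyp})^{2}\bigr) = 4[\Gamma(1):\Gamma_{1}(p)](2\zeta^{\prime}(-1)+\zeta(-1))
\]
in $\RR/\log|\QQQ^{\times}|$, which settles that case.

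For $\Gamma = \Gamma_{0}(p)$ with $p\equiv 11\bmod 12$, I would proceed by pullback along $\varphi$. By Lemma \ref{lemma:thmB} (i)--(ii), $\varphi$ is unramified over $K$ and induces an isometry of pre-log-log hermitian line bundles
\[
\varphi^{\ast}\omega_{X_{0}(p)/K}(\cusps)_{\hyp} \overset{\sim}{\longrightarrow} \omega_{X_{1}(p)/K}(\cusps)_{\hyp}.
\]
Writing $\pi_{1},\pi_{0}$ for the structural morphisms of $X_{1}(p)$ and $X_{0}(p)$ to $\Spec K$, so that $\pi_{1} = \pi_{0}\circ\varphi$, the projection formula for $\varphi_{\ast}$ in the pre-log-log arithmetic intersection theory of Burgos--Kramer--K\"uhn \cite{BKK} gives
\[
\pi_{1,\ast}\bigl(\ac1(\varphi^{\ast}\omega_{X_{0}(p)/K}(\cusps)_{\hyp})^{2}\bigr) = \deg(\varphi)\cdot\pi_{0,\ast}\bigl(\ac1(\omega_{X_{0}(p)/K}(\cusps)_{\hyp})^{2}\bigr).
\]
Since $\deg(\varphi) = [\Gamma_{0}(p):\Gamma_{1}(p)]$ and $[\Gamma(1):\Gamma_{1}(p)] = [\Gamma(1):\Gamma_{0}(p)]\cdot[\Gamma_{0}(p):\Gamma_{1}(p)]$, combining this identity with the case $\Gamma = \Gamma_{1}(p)$ and dividing both sides by $\deg(\varphi)$ produces the desired equality modulo $\log|\QQQ^{\times}|$.

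The main obstacle will be the careful application of the projection formula in the framework of pre-log-log hermitian metrics of \cite{BKK}. The hypothesis $p\equiv 11\bmod 12$ is used crucially to ensure that $\Gamma_{0}(p)$ has no elliptic fixed points; this is what makes $\varphi$ unramified at the archimedean places in the strong sense that the hyperbolic metric on $X_{0}(p)_{\CC}\setminus\cusps$ pulls back to the hyperbolic metric on $X_{1}(p)_{\CC}\setminus\cusps$, which is precisely the content of Lemma \ref{lemma:thmB} (ii). Without this hypothesis one would have to correct for the orbifold structure at the elliptic points, generating additional error terms. At the finite places, any ramification of the arithmetic models of $\varphi$ above $p$ contributes a rational multiple of $\log p$ that is absorbed by $\log|\QQQ^{\times}|$, so it does not affect the final identity.
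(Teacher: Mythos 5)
Your proof takes essentially the same route as the paper: invoke the Bost--K\"uhn formula on $X_{1}(p)$ and then descend to $X_{0}(p)$ via the unramified covering $\varphi$ together with the projection formula, using $d = [\Gamma_{0}(p):\Gamma_{1}(p)]$. The paper is slightly more explicit in passing through the Kodaira--Spencer isomorphism to match K\"uhn's normalization (his theorem concerns $\omega_{\text{mod}}^{\otimes 2}$ with the Petersson metric, and the comparison with $\|\cdot\|_{\hyp}$ on $\omega_{X_{1}(p)/K}(\cusps)$ involves a factor of $1/\sqrt{2}$ that vanishes modulo $\log|\QQQ^{\times}|$), but your argument is the same in substance.
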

\begin{proof}
We begin with $\Gamma=\Gamma_{1}(p)$. By \cite[Tab. 10.13.9.1, Th. 10.13.11]{Katz-Mazur} there is a Kodaira-Spencer isomorphism
\begin{displaymath}
	\text{KS}:\omega_{\text{mod}}^{\otimes 2}\overset{\sim}{\longrightarrow}\omega_{X_{1}(p)/K}(\text{cusps}),
\end{displaymath}
where $\omega_{\text{mod}}^{\otimes 2}$ is the sheaf of weight 2 modular forms on $X_{1}(p)/K$. Equip $\omega_{\text{mod}}^{\otimes 2}$ with the hermitian structure $\|\cdot\|_{\text{mod}}=\text{KS}^{\ast}\|\cdot\|_{\hyp}$. A theorem of Bost and K\"uhn \cite[Th. 6.1, Cor. 6.2 and Rem. 6.3 a)]{Kuhn} provides
\begin{equation}\label{equation:40}
	\begin{split}
		\adeg\pi_{\ast}(\ac1(\omega_{\text{mod}}^{\otimes 2},\|\cdot\|_{\text{mod}})^{2})=&\\
		\sharp\lbrace\iota,\overline{\iota}\rbrace\cdot2[\Gamma(1):\Gamma_{1}(p)]&(2\zeta^{\prime}(-1)+\zeta(-1))
	\end{split}
\end{equation}
in $\RR/\log|\QQQ^{\times}|$.\footnote{The discussion of \cite[Par. 4.14]{Kuhn} shows that  $\|\cdot\|_{\text{mod}}$ is $1/\sqrt{2}$ times the Petersson metric (or $L^{2}$ metric) used by Bost and K\"uhn. This disagreement contributes to 0 in the arithmetic self-intersection number (\ref{equation:40}), whose value is in $\RR/\log|\QQQ^{\times}|$.}

We now focus on $\Gamma_{0}(p)$, $p\equiv 11\mod 12$. Let $d$ denote the degree of $\varphi:X_{1}(p)\rightarrow X_{0}(p)$. By Lemma \ref{lemma:thmB} and the functoriality of the arithmetic self-intersection numbers, we have
\begin{equation}\label{equation:41}
	\begin{split}
	\adeg&\pi_{\ast}(\ac1(\omega_{X_{0}(p)/K}((0)+(\infty))_{\hyp})^{2})=\\
	&\frac{1}{d}\adeg\pi_{\ast}(\ac1(\omega_{X_{1}(p)/K}(\text{cusps})_{\hyp})^{2}).
	\end{split}
\end{equation}
Because $d=[\Gamma_{0}(p):\Gamma_{1}(p)]$, the claim follows from (\ref{equation:40}) and (\ref{equation:41}).
\end{proof}
\begin{proof}[Proof of Theorem B]
The result follows from Theorem A, the relation (\ref{equation:36}), Proposition \ref{proposition:thmB_1}--\ref{proposition:thmB_3} and the identity
\begin{displaymath}
	\exp(\zeta^{\prime}(-1))=2^{-1/36}\pi^{1/6}\Gamma_{2}(1/2)^{-2/3}
\end{displaymath}
(see \cite[App.]{Voros}).
\end{proof}
\begin{remark}
We expect that $\sim_{\QQQ^{\times}}$ can be refined to an equality. A possible approach would be to work with the regular models of Deligne-Rapoport \cite{DR} and, at least in the case $\Gamma=\Gamma_{0}(p)$, apply a theorem of Ullmo on the Faltings height of the Jacobian $J_{0}(p)$ \cite{Ullmo}. However, the necessary considerations have not been effected here: the singular fiber of $\XX_{0}(p)/\Int$, together with the sections $0$, $\infty$, is not a pointed stable curve \cite[Ch. II, Sec. 1 and App.]{Mazur}.
\end{remark}
\textbf{Acknowledgements}

The starting point of this work was the suggestion of my thesis advisors J.-B. Bost and J. I. Burgos of studying the Arakelov geometry of the moduli spaces $\SCM_{g,n}$. I am deeply indebted to them for the many hours of mathematical discussions on the topic of this work, as well as for their constant insistence and encouragement. I also thank J.-M. Bismut, G. Chenevier, T. Hahn, M. Harris, S. Keel, J. Kramer, U. K\"uhn, X. Ma, V. Maillot, C. Soul\'e, L. Takhtajan and E. Ullmo for discussions on this and related topics. Special thanks go to Scott Wolpert, who kindly shared his ideas with me. A substantial part of this work was done while the author was visiting the Universit\`a degli Studi di Padova, with the support of the European Commission's \textquotedblleft Marie Curie early stage researcher\textquotedblright\, programme (6th Framework Programme RTN \textit{Arithmetic Algebraic Geometry}). I thank these institutions for their hospitality and financial support.
\bibliographystyle{amsplain}

\end{document}